\documentclass[12pt]{amsart}
\allowdisplaybreaks
\usepackage[english]{babel}
\usepackage[pdftex,paper=a4paper,portrait=true,textwidth=450pt,textheight=675pt,tmargin=3cm,marginratio=1:1]{geometry}
\usepackage{amsfonts}
\usepackage[dvips]{graphics}
\usepackage[colorinlistoftodos]{todonotes}
\usepackage{amsmath}
\usepackage{amsthm}
\usepackage{amssymb}
\usepackage{bbm}
\usepackage{comment}
\usepackage{cancel}
\usepackage{color}
\usepackage[curve]{xypic}
\usepackage{graphicx}
\newtheorem{theorem}{Theorem}[section]
\newtheorem{corollary}[theorem]{Corollary}
\newtheorem{proposition}[theorem]{Proposition}

\newtheorem{conjecture}[theorem]{Conjecture}

\theoremstyle{definition}

\newtheorem{remark}[theorem]{Remark}

\theoremstyle{property}

\DeclareFontFamily{OT1}{rsfs}{}
\DeclareFontShape{OT1}{rsfs}{n}{it}{<-> rsfs10}{}
\DeclareMathAlphabet{\curly}{OT1}{rsfs}{n}{it}

\newcommand\I{\mathcal I}

\renewcommand\O{\mathcal O}
\newcommand\PP{\mathbb P}

\newcommand\EE{\mathbb E}

\newcommand\F{\mathcal F}

\newcommand\E{\mathbb E}
\newcommand\C{\mathbb C}

\newcommand\Q{\mathbb Q}

\newcommand\Z{\mathbb Z}
\newcommand\cZ{\mathcal Z}

\newcommand\bsa{\boldsymbol{a}}
\newcommand\bsn{\boldsymbol{n}}

\renewcommand\t{\mathfrak t}

\newcommand\SU{\mathrm{SU}}

\newcommand\vd{\mathrm{vd}}
\newcommand\pt{\mathrm{pt}}
\newcommand\vir{\mathrm{vir}}

\newcommand\SW{\mathrm{SW}}

\newcommand\td{\mathrm{td}}
\newcommand\frakT{\mathfrak{T}}
\newcommand\rk{\operatorname{rk}}

\newcommand\ch{\operatorname{ch}}

\renewcommand\hom{\mathcal{H}{\it{om}}}

\newcommand\Pic{\operatorname{Pic}}

\newcommand\INTO{\ar@{^{(}->}[r]}

\newcommand{\smfr}[2]{\hbox{$\frac{#1}{#2}$}}

\setcounter{secnumdepth}{2}
\DeclareRobustCommand{\SkipTocEntry}[4]{}
\def\eps{\varepsilon}

\begin{document}
\title[Virtual Segre and Verlinde numbers]{Virtual Segre and Verlinde numbers \\ of projective surfaces}
\author[G\"ottsche and Kool]{L.~G\"ottsche and M.~Kool}
\maketitle

\vspace{-1cm}

\begin{abstract}
Recently, Marian-Oprea-Pandharipande established (a generalization of) Lehn's conjecture for Segre numbers associated to Hilbert schemes of points on surfaces. Extending work of Johnson, 
they provided a conjectural correspondence between Segre and Verlinde numbers. For surfaces with holomorphic 2-form, we propose conjectural generalizations of their results to moduli spaces of stable sheaves of any rank. 

Using Mochizuki's formula, we derive a universal function which expresses virtual Segre and Verlinde numbers of surfaces with holomorphic 2-form in terms of Seiberg-Witten invariants and intersection numbers on products of Hilbert schemes of points. We prove that certain canonical virtual Segre and Verlinde numbers of general type surfaces are topological invariants and we verify our conjectures in examples. 

The power series in our conjectures are algebraic functions, for which we find expressions in several cases and which are permuted under certain Galois actions. Our conjectures imply an algebraic analog of the Mari\~{n}o-Moore conjecture for higher rank Donaldson invariants. For ranks $3$ and $4$, we obtain explicit expressions for Donaldson invariants in terms of Seiberg-Witten invariants.
\end{abstract}
\thispagestyle{empty}

\section{Introduction} 
\addtocontents{toc}{\protect\setcounter{tocdepth}{1}}

\subsection*{Segre numbers}

Let $S$ be a smooth projective surface over $\C$ and $\alpha \in K(S)$ a class in the Grothendieck group of coherent sheaves on $S$.
On the Hilbert scheme $S^{[n]}$ of $n$ points on $S$, we have the tautological class
\begin{equation*} 
\alpha^{[n]} := p_! (q^* \alpha) \in K(S^{[n]}),
\end{equation*}
where $q : \mathcal{Z} \rightarrow S$ and $p : \cZ \rightarrow S^{[n]}$ denote projections from the universal subscheme $\cZ \subset S \times S^{[n]}$, and $p_! := \sum_i (-1)^i R^i p_*$. We refer to the coefficients of the following generating series as Segre numbers of $S$
$$
\sum_{n=0}^{\infty} z^n \int_{S^{[n]}} c(\alpha^{[n]}) \in \Q[[z]],
$$
where $c$ denotes total Chern class. For $\alpha = -V$, where $V$ is the class of a vector bundle, $c(\alpha^{[n]}) = s(V^{[n]})$, where $s$ denotes total Segre class. Throughout the paper, we abbreviate $K:=K_S$ and $\chi := \chi(\O_S)$. The following theorem was proved in \cite{MOP3}.
\begin{theorem}[Marian-Oprea-Pandharipande] \label{MOPthm1}
For any $s \in \Z$, there exist $V_s$, $W_s$, $X_s$, $Y_s$, $Z_s \in \Q[[z]]$ with the following properties. For any $K$-theory class $\alpha$ of rank $s$ on a smooth projective surface $S$, we have
$$
\sum_{n=0}^{\infty} z^n \int_{S^{[n]}} c(\alpha^{[n]}) = V_s^{c_2(\alpha)} \, W_s^{c_1(\alpha)^2} \, X_s^{\chi} \, Y_s^{c_1(\alpha) K} \, Z_s^{K^2}.
$$
Moreover, under the formal change of variables $z = t(1+(1-s)t)^{1-s}$, we have 
\begin{align*}
V_s(z) &= (1+(1-s)t)^{1-s} (1+(2-s)t)^s, \\ 
W_s(z) &= (1+(1-s)t)^{\frac{1}{2}s-1} (1+(2-s)t)^{\frac{1}{2}(1-s)}, \\
X_s(z) &= (1+(1-s)t)^{\frac{1}{2} s^2-s} (1+(2-s)t)^{-\frac{1}{2}s^2+\frac{1}{2}}(1+(2-s)(1-s)t)^{-\frac{1}{2}}.
\end{align*}
\end{theorem}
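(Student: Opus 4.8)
The plan is to treat the two assertions in turn: first the product structure of the generating series, then the explicit closed forms for $V_s$, $W_s$, $X_s$. For the product structure I would argue from universality. By the theorem of Ellingsrud--G\"ottsche--Lehn, for each fixed $n$ the integral $\int_{S^{[n]}}c(\alpha^{[n]})$ is a \emph{universal} polynomial, depending only on $n$ and on $s=\rk\alpha$, in the Chern numbers of the pair $(S,\alpha)$; using Noether's formula $\chi=\tfrac{1}{12}(K^2+c_2(S))$ to trade $c_2(S)$ for $\chi$, there are exactly five independent such numbers, namely $c_2(\alpha)$, $c_1(\alpha)^2$, $\chi$, $c_1(\alpha)K$ and $K^2$. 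To upgrade this to the multiplicative form I would exploit that the construction is multiplicative under disjoint unions: since $(S_1\sqcup S_2)^{[n]}=\coprod_{a+b=n}S_1^{[a]}\times S_2^{[b]}$ and both $\alpha^{[n]}$ and its total Chern class split as external products along this decomposition, the generating series $Z(z):=\sum_n z^n\int_{S^{[n]}}c(\alpha^{[n]})$ satisfies $Z(S_1\sqcup S_2)=Z(S_1)\,Z(S_2)$. Each of the five Chern numbers is additive under $\sqcup$, so writing $Z=\sum_n z^n P_n(\vec a)$ with $\vec a$ the vector of Chern numbers, this reads $Z(\vec a+\vec b)=Z(\vec a)Z(\vec b)$. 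Since the $n=0$ term is $1$, the series $\log Z$ is defined, additive in $\vec a$, and (by universality) polynomial in $\vec a$, hence linear; thus $\log Z=\sum_i a_iL_i(z)$ for some $L_i\in z\Q[[z]]$, which is exactly the claimed product $V_s^{c_2(\alpha)}W_s^{c_1(\alpha)^2}X_s^{\chi}Y_s^{c_1(\alpha)K}Z_s^{K^2}$. One needs a spanning set of Chern-number tuples for this to force linearity, which is arranged by taking products and disjoint unions of a few standard surfaces with bundles.

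The series $V_s$, $W_s$, $X_s$ are precisely the $K$-independent part: on a surface with $K=0$ the factors $Y_s^{c_1(\alpha)K}$ and $Z_s^{K^2}$ are trivial and $Z=V_s^{c_2(\alpha)}W_s^{c_1(\alpha)^2}X_s^{\chi}$, with $\chi=0$ for abelian and $\chi=2$ for K3 surfaces. This is also why one expects closed forms only for these three. To compute them I would use $(\C^*)^2$-equivariant localization on $\mathrm{Hilb}^n(\C^2)$, restricted to the antidiagonal subtorus $s_2=-s_1$ preserving the holomorphic symplectic form, which is the local model of the condition $K=0$. The fixed points are the monomial ideals, indexed by partitions $\lambda\vdash n$; the equivariant weights of $T_\lambda\mathrm{Hilb}^n(\C^2)$ and of $\alpha^{[n]}$ are given by the standard arm--leg formulas, while the surviving parameters encode $c_2(\alpha)$, $c_1(\alpha)^2$ and $\chi$. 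Extracting $V_s$, $W_s$, $X_s$ then reduces to evaluating the resulting sum over partitions, whose low-order terms can be cross-checked against direct computations on K3 and abelian surfaces.

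The main obstacle is this final partition sum and, above all, recognizing its closed form. The sum does not telescope in the variable $z$, and the appearance of the substitution $z=t(1+(1-s)t)^{1-s}$ is the crux: I expect that in the variable $t$ the generating series satisfies a simple algebraic relation, the substitution being precisely the Lagrange inversion that linearizes the partition sum (it degenerates to $z=t$ when $s=1$, recovering the rank-one case). Concretely, I would guess the closed forms for $V_s(z)$, $W_s(z)$, $X_s(z)$ from low rank and small $n$, verify that they solve this relation with the correct leading terms, and conclude by uniqueness of the solution. Isolating and proving the relation, rather than the surrounding bookkeeping, is where the real work lies.
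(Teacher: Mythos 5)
First, a point of context: the paper does not prove this theorem at all. It is quoted from the literature --- existence of the five universal series is attributed to [EGL], and the closed formulae for $V_s$, $W_s$, $X_s$ to [MOP3] --- so the benchmark is the proof of Marian--Oprea--Pandharipande, not anything internal to this paper. Your first paragraph (universality for fixed $n$, multiplicativity of the generating series under $(S_1\sqcup S_2)^{[n]}=\coprod_{a+b=n}S_1^{[a]}\times S_2^{[b]}$, additivity of the five Chern numbers, hence linearity of $\log Z$ after checking a spanning set of Chern-number tuples) is exactly the standard [EGL] argument, the same one this paper reruns in Step 3 of the proof of Theorem \ref{structhm1}. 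That part is correct and matches the cited source.

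The closed formulae are where your proposal breaks down. Your reduction to the $K$-trivial part is right ($Y_s$, $Z_s$ drop out when $K=0$, and indeed [MOP3] pins down $V_s,W_s,X_s$ by working over $K$-trivial surfaces), but their actual route is geometric --- finiteness and vanishing statements for tautological bundles and Quot-scheme-type counts on abelian and $K3$ surfaces, in the line of [MOP1], [MOP2] and Voisin's work, which produce enough exact equations on the series to determine them, with the change of variables $z=t(1+(1-s)t)^{1-s}$ emerging from Lagrange-inversion combinatorics as in [MOP2] --- not equivariant localization on $\mathrm{Hilb}^n(\C^2)$. Your localization setup is not unreasonable in principle (the antidiagonal torus has $K$ equivariantly trivial, its fixed ideals are still the monomial ones, and varying the equivariant weights of $\alpha$ could plausibly separate $V_s$, $W_s$, $X_s$), but your final step is not a proof: you propose to guess the closed forms and "verify that they solve this relation with the correct leading terms, and conclude by uniqueness of the solution," where the relation itself is never identified --- you only say you "expect" a simple algebraic relation to exist. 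That relation \emph{is} the content of the theorem; without exhibiting it and proving that it characterizes the partition sum, the uniqueness argument has nothing to bite on, and checking low-order coefficients against $K3$ and abelian computations establishes nothing for all $n$. So the decisive step --- evaluating the partition sum in closed form, which you yourself flag as "where the real work lies" --- is a genuine gap, and it is precisely the step that [MOP3] resolves by different (geometric vanishing) means.
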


The existence of the universal power series $V_s$, $W_s$, $X_s$, $Y_s$, $Z_s$ follows from \cite{EGL} and the formulae for $V_s$, $W_s$, $X_s$ were found (and proved to hold) by A.~Marian, D.~Oprea, and R.~Pandharipande \cite{MOP3}. Segre numbers have a rich history. When $S \subset \PP^{3n-2}$ and $L \cong \O(1)|_S$, the Segre number $\int_{S^{[n]}} s(L^{[n]})$ counts the number of $(n-2)$-dimensional projective linear subspaces of $\PP^{3n-2}$ that are $n$-secant to $S$. M.~Lehn's conjecture from 1999 \cite{Leh} gives formulae for $(V_{-1}W_{-1})$, $X_{-1}$, $Y_{-1}$, $Z_{-1}$. 
Lehn's conjecture was proved for $K$-trivial surfaces by Marian-Oprea-Pandharipande \cite{MOP1} and established in general in \cite{MOP2} building on \cite{MOP1} and work of C.~Voisin \cite{Voi}. Explicit expressions for $Y_s$, $Z_s$ are known for $s \in \{-2,-1,0,1,2\}$ \cite{MOP3} and proved except for the case of $Y_0$. 
Conjecturally, $Y_s, Z_s$ are algebraic functions for all $s \in \Z$ \cite[Conj.~1]{MOP3}.

\subsection*{Verlinde numbers} 

A line bundle $L$ on $S$ induces a line bundle $L_n$ on the symmetric product $S^{(n)}$ by $\mathfrak{S}_n$-equivariant push-forward of $L \boxtimes \cdots \boxtimes L$ along the morphism $S^{n} \rightarrow S^{(n)} = S^n / \mathfrak{S}_n$. Its pull-back along the Hilbert-Chow morphism $S^{[n]} \rightarrow S^{(n)}$ is denoted by $\mu(L)$. Together with
$E:=\det \O_S^{[n]}$
the lines bundles $\mu(L)$ generate the Picard group of $S^{[n]}$. Holomorphic Euler characteristics of line bundles on $S^{[n]}$ are known as Verlinde numbers of $S$. 
For any $r \in \Z$, we form a generating series 
$$
\sum_{n=0}^{\infty} w^n \, \chi(S^{[n]}, \mu(L) \otimes E^{\otimes r}).
$$
Similar to the previous section, Verlinde numbers are given by universal power series and the ones determined by $K$-trivial surfaces are known.
\begin{theorem}[Ellingsrud-G\"ottsche-Lehn] \label{EGLthm}
For any $r \in \Z$, there exist $g_r, f_r, A_r, B_r \in \Q[[w]]$ with the following properties. For any line bundle $L$ on a smooth projective surface $S$, we have
$$
\sum_{n=0}^{\infty} w^n \, \chi(S^{[n]}, \mu(L) \otimes E^{\otimes r}) = g_r^{\chi(L)} \, f_r^{\frac{1}{2} \chi} \, A_r^{L K} \, B_r^{K^2}.
$$
Moreover, under the formal change of variables $w = v(1+v)^{r^2-1}$, we have
\begin{align*}
g_r(w) = 1+v, \quad f_r(w) = (1+v)^{r^2}(1+ r^2 v)^{-1}.
\end{align*}
\end{theorem}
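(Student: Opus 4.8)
The plan is to separate the statement into a \emph{structural} part (existence of the universal series $g_r,f_r,A_r,B_r$ together with the product form) and an \emph{evaluation} part (the closed formulas for $g_r$ and $f_r$).

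\textbf{Step 1: universality and the product form.} By Hirzebruch--Riemann--Roch,
\[
\chi(S^{[n]}, \mu(L)\otimes E^{\otimes r}) \;=\; \int_{S^{[n]}} e^{\,c_1(\mu(L))+r\,c_1(E)}\,\td\!\left(S^{[n]}\right).
\]
Grothendieck--Riemann--Roch for the projection $p\colon\cZ\to S^{[n]}$ expresses $\ch(\O_S^{[n]})$, hence $c_1(E)=c_1(\det\O_S^{[n]})$, in tautological classes, and $c_1(\mu(L))$ is tautological by construction; the Todd class of $S^{[n]}$ is tautological as well. Thus the integrand is a universal polynomial in tautological classes, and the Ellingsrud--G\"ottsche--Lehn calculus \cite{EGL} evaluates the integral as a universal polynomial in the Chern numbers $\chi(\O_S),K^2,LK,L^2$. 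The multiplicative/additive structure underlying \cite{EGL} then forces the generating series to be a product of universal power series raised to these Chern numbers; trading $L^2$ for $\chi(L)=\chi+\tfrac12(L^2-LK)$ as the basic variable yields exactly the four factors with exponents $\chi(L),\tfrac12\chi,LK,K^2$. This establishes existence.

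\textbf{Step 2: reduction to two families.} Since only $g_r$ and $f_r$ are asserted explicitly, it suffices to evaluate on $K$-trivial surfaces, where $A_r^{LK}=B_r^{K^2}=1$. On an abelian surface $\chi=0$, so the right-hand side collapses to $g_r^{\chi(L)}$; choosing $L$ with $\chi(L)\neq0$ (e.g.\ a polarization) determines $g_r$. On a K3 surface $\chi=2$, so the series equals $g_r^{\chi(L)}f_r$, and dividing by the now-known $g_r^{\chi(L)}$ isolates $f_r$. By universality each computation may be performed on one convenient representative, say a product of two elliptic curves and a smooth quartic.

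\textbf{Step 3: evaluation on $K$-trivial surfaces.} The substance of the proof is computing
\[
\sum_{n\ge0} w^n\,\chi\!\left(S^{[n]}, \mu(L)\otimes E^{\otimes r}\right)
\]
in closed form for $S$ abelian or K3. One route uses the Nakajima/G\"ottsche--Soergel description of $H^*(S^{[n]})$, in which $c_1(\mu(L))$ and $c_1(E)$ have explicit expressions, so that the Euler characteristics assemble into a vertex-operator generating series. An alternative uses the Bridgeland--King--Reid equivalence to rewrite $\chi(S^{[n]},-)$ as an $\mathfrak{S}_n$-equivariant Euler characteristic on $S^n$, which factorizes over a symmetric/exterior algebra. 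Either way one obtains a generating series that linearizes under the substitution $w=v(1+v)^{r^2-1}$, producing $g_r=1+v$ and $f_r=(1+v)^{r^2}(1+r^2v)^{-1}$.

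\textbf{Main obstacle.} Steps 1 and 2 are formal, resting on \cite{EGL}. The genuine difficulty is Step 3: obtaining the Verlinde generating series on K3 and abelian surfaces in closed form and recognizing it as an algebraic function. The change of variables $w=v(1+v)^{r^2-1}$ is precisely the Lagrange inversion that trivializes these series, and identifying it --- rather than merely computing finitely many coefficients --- is the crux of the argument.
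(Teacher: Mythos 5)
You should note first that the paper contains no proof of Theorem \ref{EGLthm} to compare against: it is quoted verbatim from \cite{EGL}, with the compact form of $g_r$ and $f_r$ credited to \cite{Joh}, and is used in this paper purely as a black box. Judged on its own terms, your Steps 1 and 2 are sound and standard. Hirzebruch--Riemann--Roch plus the universality machinery of \cite{EGL} (which does cover integrands involving the Todd class and Chern classes of the tangent bundle) yields four universal series whose exponents can be taken to be any basis of the span of $L^2$, $LK$, $K^2$, $\chi$, and passing to $\chi(L)=\chi+\tfrac12 L(L-K)$ is harmless linear algebra; the multiplicativity under disjoint union that forces the product form is exactly the argument this paper itself rehearses in Step 3 of the proofs of Theorems \ref{structhm1} and \ref{structhm2}. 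Likewise, evaluating on an abelian surface and then a $K3$ does isolate $g_r$ and then $f_r$, since $g_r$ has constant term $1$ and is therefore determined by any power $g_r^m$ with $m\neq 0$.

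The genuine gap is Step 3, which is the entire content of the explicit formulas and which you never carry out. You offer two candidate routes (Nakajima calculus; a Bridgeland--King--Reid reduction to $\mathfrak{S}_n$-equivariant Euler characteristics on $S^n$) but develop neither, and neither is routine: knowing $H^*(S^{[n]})$ as a module over Nakajima operators does not compute $\chi$ of a line bundle, which requires the ring structure and the Todd class; and under BKR the twist by $E^{\otimes r}$ is a nontrivial equivariant twist whose $r$-dependence is the whole difficulty --- a naive ``factorization over a symmetric/exterior algebra'' would never produce the factor $(1+r^2v)^{-1}$ in $f_r$, with its dependence on $r$ only through $r^2$. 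You correctly identify that finding the substitution $w=v(1+v)^{r^2-1}$, rather than matching finitely many coefficients, is the crux, but naming the crux does not resolve it: as written, the evaluation half of the theorem is asserted, not proved. Your framework does check out in the two degenerate cases --- at $r=0$ the triviality of the derived pushforward along the Hilbert--Chow morphism gives $\chi(S^{[n]},\mu(L))=\binom{\chi(L)+n-1}{n}$, i.e.\ $(1-w)^{-\chi(L)}$, and at $r=\pm1$ the identity $\mu(L)\otimes E=\det L^{[n]}$ together with $\chi(S^{[n]},\det L^{[n]})=\binom{\chi(L)}{n}$ gives $(1+w)^{\chi(L)}$, both consistent with the stated $g_r,f_r$ --- but the general-$r$ computation on $K$-trivial surfaces is precisely what \cite{EGL} supplies and what your sketch leaves open.
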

In \cite{EGL} $g_r(w)$, $f_r(w)$ were written in a different way. The compact form used here was first given in \cite{Joh}.
Serre duality implies $A_r = B_{-r} / B_r$ for all $r$. Furthermore, $A_r=B_r = 1$ for $r = 0,\pm 1$ \cite{EGL}. 
As in the case of Segre numbers, a general formula for $A_r, B_r$ is unknown. 

\subsection*{Segre-Verlinde correspondence}

From Theorems \ref{MOPthm1} and \ref{EGLthm}, we deduce
\begin{align*}
f_{r}(w) = W_s(z)^{-4s} \, X_s(z)^2, \quad g_{r}(w) = V_s(z) \, W_s(z)^2, 
\end{align*}
where $s =1+r$ and 
\begin{equation} \label{varchange1}
w = v(1+v)^{r^2-1}, \quad z = t(1+(1-s)t)^{1-s}, \quad v = t(1-rt)^{-1}.
\end{equation} 
The following conjecture was proposed in \cite{MOP3} based on work of D.~Johnson \cite{Joh}, which in turn was motivated by strange duality.
\begin{conjecture}[Johnson, Marian-Oprea-Pandharipande]
For any $r \in \Z$, $s=1+r$, and under the formal variable change \eqref{varchange1}, we have 
$$
A_{r}(w) = W_s(z) \, Y_s(z), \quad  B_{r}(w) = Z_s(z). 
$$
\end{conjecture}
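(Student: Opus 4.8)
The plan is to treat both generating series as universal power series and to show that, once the already-established part of the correspondence is stripped away, everything reduces to the dependence on the canonical class $K$. By Theorems~\ref{MOPthm1} and \ref{EGLthm} each series factors as a product of universal series raised to characteristic numbers, and the relations $f_r = W_s^{-4s}X_s^2$ and $g_r = V_sW_s^2$ are known because all five series $f_r, g_r, V_s, W_s, X_s$ are given in closed form. Writing $\chi(L) = \chi + \tfrac12 L^2 - \tfrac12 LK$ by Riemann--Roch and collecting exponents, the Verlinde series becomes $(g_rf_r^{1/2})^{\chi}\,(g_r^{1/2})^{L^2}\,(g_r^{-1/2}A_r)^{LK}\,B_r^{K^2}$, so that the known relations control all of the correspondence except the $LK$- and $K^2$-exponents. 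The remaining content is exactly the two identities $A_r = W_sY_s$ and $B_r = Z_s$, in which $A_r, B_r, Y_s, Z_s$ are the only series not available in closed form.

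The key reduction I would use is Serre duality. Since $A_r = B_{-r}/B_r$ for all $r$, and since $s = 1+r$ sends $r \mapsto -r$ to $s \mapsto 2-s$, the second identity $B_r = Z_s$ (applied at both $r$ and $-r$) forces $A_r = Z_{2-s}/Z_s$. Comparing with the desired $A_r = W_sY_s$, the whole conjecture becomes \emph{equivalent} to the pair of statements
\begin{equation*}
B_r = Z_s \quad\text{for all } r, \qquad W_sY_s = Z_{2-s}/Z_s \quad\text{for all } s,
\end{equation*}
the second being a purely Segre-theoretic self-duality under $s \mapsto 2-s$. A first consistency check comes from $r \in \{0,\pm1\}$, where $A_r = B_r = 1$: the conjecture then predicts $W_sY_s = 1$ and $Z_s = 1$ for $s \in \{0,1,2\}$, both readable from the known $s \in \{0,1,2\}$ Segre series, and in particular it pins down the otherwise undetermined series as $Y_0 = W_0^{-1}$.

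To establish the two displayed identities I would evaluate the universal series on surfaces with $K \neq 0$, since $K$-trivial surfaces see only $g_r, f_r, V_s, W_s, X_s$ and carry no information about $A_r, B_r, Y_s, Z_s$. Products of curves and minimal surfaces of general type let one vary the $LK$- and $K^2$-exponents, and for the \emph{virtual} versions of these numbers Mochizuki's formula rewrites both sides in terms of Seiberg--Witten invariants and tautological integrals over products of Hilbert schemes, which can then be matched term by term. For the small ranks $s \in \{-2,-1,0,1,2\}$ the explicit expressions for $Y_s, Z_s$ give an unconditional verification, and these feed into the $s \mapsto 2-s$ duality above.

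The main obstacle is that, unlike the $f_r, g_r \leftrightarrow V_s, W_s, X_s$ part, no closed formulae for $A_r, B_r, Y_s, Z_s$ exist for general $r$, so the formula-comparison that proves the first three relations is unavailable. A uniform proof therefore seems to need one of two genuinely deeper inputs: either a geometric strange-duality correspondence that identifies the Segre generating series with the Verlinde one directly, yielding all relations simultaneously, or a uniform evaluation of the canonical-class dependence of the Hilbert-scheme integrals. Even granting $B_r = Z_s$, extracting the Segre self-duality $W_sY_s = Z_{2-s}/Z_s$ without first computing $Y_s$ and $Z_s$ is, I expect, the crux of the difficulty.
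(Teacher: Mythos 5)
First, a point of order: the paper does not prove this statement. It is recorded as a conjecture, attributed to Johnson and Marian--Oprea--Pandharipande, and the paper's contribution around it consists of the \emph{proved} relations $f_r = W_s^{-4s}X_s^2$ and $g_r = V_sW_s^2$ (which follow from the closed forms in Theorems \ref{MOPthm1} and \ref{EGLthm}), low-order numerical verifications, and conjectural generalizations to higher rank. So there is no proof in the paper to compare yours against, and your proposal must be judged as an attempted proof of an open statement---which, as you yourself concede in the last paragraph, it is not: the two identities $B_r = Z_s$ and $W_sY_s = Z_{2-s}/Z_s$ to which you reduce the conjecture are never established, and they are exactly its content.

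On its own terms the reduction is sensible but has two concrete defects. (a) The Serre-duality step is stated imprecisely: the variable change \eqref{varchange1} depends on $r$ and $s$, and under $r \mapsto -r$, $s \mapsto 2-s$ the $t$-coordinate changes by the fractional-linear substitution $t \mapsto t(1-2rt)^{-1}$ (this is the variable $\tau$ appearing in the paper's Corollary \ref{cor:SDvir}, of which your self-duality $W_sY_s = Z_{2-s}/Z_s$ is the rank-one, non-virtual shadow). As written, $Z_{2-s}$ and $Z_s$ in your displayed equivalence are evaluated at different arguments, and the identity is only correct with that substitution made explicit. (b) The engine you propose for actually proving $B_r = Z_s$ does not apply: Mochizuki's formula (Theorem \ref{mocthm}) requires $\rho \in \Z_{>1}$ and computes virtual invariants, whereas the Johnson--MOP conjecture concerns honest rank-one Hilbert-scheme integrals; even in the virtual setting the paper extracts only finitely many coefficients by toric localization, never a term-by-term identification of the two generating series---such an identification is precisely what is missing, since no closed forms for $A_r, B_r, Y_s, Z_s$ exist for general $r,s$. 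Relatedly, your claim of ``unconditional verification'' for $s \in \{-2,-1,0,1,2\}$ overreaches: knowing $Y_s, Z_s$ verifies the conjecture only where $A_r, B_r$ are also known, i.e.\ $r \in \{0,\pm 1\}$ (so $s \in \{0,1,2\}$), and even there $Y_0$ is itself only conjectural, so that case yields the prediction $Y_0 = W_0^{-1}$ rather than a check. Your consistency observations are correct and genuinely in the spirit of the paper's own Conjecture \ref{conj:SDvir}, but they constitute evidence, not proof.
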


\subsection*{Virtual Segre numbers}

Let $(S,H)$ be a smooth polarized surface. For any $\rho \in \Z_{>0}$, $c_1 \in H^2(S,\Z)$ algebraic, and $c_2 \in H^4(S,\Z)$, we denote by $M:=M_S^H(\rho,c_1,c_2)$ the coarse moduli scheme of rank $\rho$ Gieseker $H$-semistable torsion free sheaves on $S$ with Chern classes $c_1,c_2$. We assume $M$ contains no strictly semistable sheaves (with respect to the polarization $H$). For this introduction, we also assume there exists a universal sheaf $\E$ on $S \times M$, but we remove this assumption in Remark \ref{univdrop1}. We denote by $\pi_S : S \times M \rightarrow S$ and $\pi_M : S \times M \rightarrow M$ the projections to the factors. 
Consider the slant product
$$
/ : H^p(S \times M,\Q) \times H_q(S,\Q) \rightarrow H^{p-q}(M,\Q).
$$
The $\mu$-classes in Donaldson theory are defined as follows. For any  $\sigma \in H^k(S,\Q)$ 
\begin{equation*} 
\mu(\sigma):=  \Big(c_2(\E)-\frac{\rho-1}{2\rho} c_1(\E)^2 \Big) / \mathrm{PD(\sigma)} \in H^{k}(M,\Q),
\end{equation*}
where $\mathrm{PD(\sigma})$ denotes the Poincar\'e dual of $\sigma$. Formally, we can write 
$$
2\rho c_2(\E)-(\rho-1) c_1(\E)^2 =- 2\rho \ch_2(\E\otimes \det(\E)^{-\frac{1}{\rho}}),
$$ 
which shows that $\mu(\sigma)$ is independent of the choice of universal sheaf. For any class $\alpha \in K(S)$, we define 
$$
\ch(\alpha_M) :=  \ch(- \pi_{M !} ( \pi_S^* \alpha \cdot \E \cdot  \det(\E)^{-\frac{1}{\rho}})) \in A^*(M)_{\mathbb{Q}},
$$
where $A^*(M)_{\Q}$ denotes the Chow ring with rational coefficients. When the root $\det(\E)^{-1 / \rho}$ does not exist, the right hand side is defined by a formal application of the Grothendieck-Riemann-Roch formula. This factor ensures $\ch(\alpha_M)$ is independent of the choice of universal sheaf. We write $c(\alpha_M)=\sum_i c_i(\alpha_M)$ for the Chern classes corresponding to $\ch(\alpha_M)$.
When $b_1(S) = 0$ and $M:=M_S^H(1,0,n) \cong S^{[n]}$, we have $c(\alpha_M) = c(\alpha^{[n]})$.

The moduli space $M$ carries a virtual class constructed by T.~Mochizuki \cite{Moc}
\begin{align} 
\begin{split}\label{vd}
[M]^{\vir} \in H_{2\vd(M)}(M), \quad \vd(M) := 2\rho c_2 - (\rho-1) c_1^2 - (\rho^2-1) \chi.
\end{split}
\end{align}
We write $\pt \in H^4(S,\Z)$ for the Poincar\'e dual of the point class, $u$ is an extra formal variable, $\eps_\rho := \exp(2 \pi i /\rho)$ where $i = \sqrt{-1}$, and $[n]:=\{1, \ldots, n\}$ for any $n \in \mathbb{Z}_{\geq 0}$. For any (possibly empty) $J \subset [n]$, we write $|J|$ for its cardinality and $\|J\| := \sum_{j \in J} j$.
\begin{conjecture} \label{conj1}
Let $\rho  \in \Z_{>0}$ and $s \in \Z$. There exist  $V_s$, $W_s$, $X_s$, $Q_s$, $R_s$, $T_s \in \C[[z]]$, $Y_{J,s}$, $Z_{J,s}$, $S_{J,s} \in \C[[z^{\frac{1}{2}}]]$ for all $J \subset [\rho-1]$ with the following property.\footnote{These universal power series depend on $\rho$ and $s$. We suppress the dependence on $\rho$.} Let $S$ be a minimal surface of general type with $p_g(S)>0$ and $b_1(S) = 0$. Suppose $M:=M_S^H(\rho,c_1,c_2)$ contains no strictly semistable sheaves. For any $\alpha \in K(S)$ with $\rk(\alpha)=s$ and $L \in \Pic(S)$ the virtual Segre number $\int_{[M]^{\vir}} c( \alpha_M ) \, \exp(\mu(L)  + \mu(\pt) u)$ is the coefficient of $z^{\frac{1}{2} \vd(M)}$ of
\begin{align*}
\rho^{2 - \chi+K^2} \, V_s^{c_2(\alpha)} \, W_s^{c_1(\alpha)^2} \, X_s^{\chi} \, e^{L^2 Q_s + (c_1(\alpha)L) R_s + u \, T_s} \sum_{J \subset [\rho-1]} (-1)^{|J| \chi}  \, \eps_{\rho}^{\|J\| K c_1} \, Y_{J,s}^{c_1(\alpha) K} \, Z_{J,s}^{K^2} \, e^{(K L) S_{J,s}}.
\end{align*}
Moreover, under the formal change of variables $z = t (1+(1-\tfrac{s}{\rho}) t)^{1-\frac{s}{\rho}}$, we have
\begin{align*}
V_s(z) &= (1+(1-\tfrac{s}{\rho})t)^{1-s} (1+(2-\tfrac{s}{\rho})t)^s (1+(1-\tfrac{s}{\rho})t)^{\rho-1} , \\ 
W_s(z) &= (1+(1-\tfrac{s}{\rho})t)^{\frac{1}{2}s-1} (1+(2-\tfrac{s}{\rho})t)^{\frac{1}{2}(1-s)} (1+(1-\tfrac{s}{\rho})t)^{\frac{1}{2} - \frac{1}{2} \rho}, \\
X_s(z) &=  (1+(1-\tfrac{s}{\rho})t)^{\frac{1}{2} s^2-s} (1+(2-\tfrac{s}{\rho})t)^{-\frac{1}{2}s^2+\frac{1}{2}} (1+(1-\tfrac{s}{\rho})(2-\tfrac{s}{\rho})t)^{-\frac{1}{2}} (1+(1-\tfrac{s}{\rho})t)^{-\frac{(\rho-1)^2}{2\rho} s}, \\
Q_s(z) &= \tfrac{1}{2}t(1+(1-\tfrac{s}{\rho}) t), \quad R_s(z) = t, \quad T_s(z) = \rho t (1+ \tfrac{1}{2}(1-\tfrac{s}{\rho})(2-\tfrac{s}{\rho})t).
\end{align*}
Furthermore, $Y_{J,s}$, $Z_{J,s}$, $S_{J,s}$ are all algebraic functions.
\end{conjecture}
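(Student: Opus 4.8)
The plan is to reduce the virtual intersection numbers to intersection numbers on products of Hilbert schemes via Mochizuki's formula \cite{Moc}, and then to invoke the universality of Ellingsrud-G\"ottsche-Lehn \cite{EGL}. Mochizuki's formula expresses $\int_{[M]^{\vir}} \Phi$, for a tautological integrand $\Phi = c(\alpha_M)\exp(\mu(L)+\mu(\pt)u)$, as a sum over ways of splitting the Chern data into Seiberg-Witten channels, each weighted by a Seiberg-Witten invariant of $S$ and by an integral over a product of Hilbert schemes of points. The crucial simplification is that for a minimal surface of general type with $p_g(S)>0$ and $b_1(S)=0$ the only Seiberg-Witten basic classes are $0$ and $K$, each with invariant $1$. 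Hence every channel carries either $0$ or $K$, and the Mochizuki sum collapses to a sum indexed by the subset $J$ of channels carrying $K$; the determinant normalisation $\det(\E)^{-1/\rho}$ removes the overall shift and leaves $\rho-1$ free binary choices, producing exactly the sum over $J \subset [\rho-1]$, while the twist $\eps_\rho^{\|J\| K c_1}$ and the sign $(-1)^{|J|\chi}$ arise from this traceless normalisation and the parity of the Seiberg-Witten contribution.

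Next I would fix $J$ and analyse the corresponding summand. After the collapse each summand is a residue of an integral of tautological classes over a product of Hilbert schemes $S^{[n_i]}$, the classes being assembled from $\alpha$, $L$, $\pt$ and the Chern roots of the channel line bundles. By the EGL cobordism argument each such integral is a universal polynomial in the basic numbers $c_2(\alpha)$, $c_1(\alpha)^2$, $\chi$, $c_1(\alpha) K$, $K^2$, $L^2$, $LK$ and $c_1(\alpha) L$. Forming the generating series in $n=\sum_i n_i$ and using that the tautological integrals are multiplicative over disjoint unions, the series factorises into a product of universal power series raised to these numbers, together with the exponential insertions $e^{L^2 Q_s + (c_1(\alpha) L) R_s + u\, T_s}$ coming from the $\mu(L)$ and $\mu(\pt)$ classes; the extraction of the coefficient of $z^{\frac{1}{2}\vd(M)}$ encodes Mochizuki's residue, and the global prefactor $\rho^{2-\chi+K^2}$ is the overall normalisation. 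This establishes the product form of the conjecture and the existence of $V_s, W_s, X_s, Q_s, R_s, T_s, Y_{J,s}, Z_{J,s}, S_{J,s}$.

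To identify the explicit series I would proceed by matching and by tracking the $\rho$-dependence. Specialising to $\rho=1$ must reproduce Theorem \ref{MOPthm1}: the extra factors $(1+(1-\tfrac{s}{\rho})t)^{\bullet}$ in $V_s, W_s, X_s$ and the change of variables $z=t(1+(1-\tfrac{s}{\rho})t)^{1-\frac{s}{\rho}}$ all degenerate to the rank-one formulae. The additional exponents proportional to $\rho-1$ then come from the rank-one blocks produced by the splitting, the twist $\det(\E)^{-1/\rho}$, and the virtual-dimension shift $-(\rho^2-1)\chi$ in \eqref{vd}. The series $Q_s, R_s, T_s$ governing the $\mu(L)$ and $\mu(\pt)$ insertions I would read off from the EGL Verlinde computation (Theorem \ref{EGLthm}) transported through the same change of variables, which also forces the quadratic shape of $Q_s$ and $T_s$.

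The main obstacle is the last step: pinning down the closed forms and proving algebraicity of $Y_{J,s}, Z_{J,s}, S_{J,s}$. Evaluating the Mochizuki residue and resumming the EGL universal polynomials in closed form is delicate, and I expect one can only verify the proposed expressions against a large but finite set of computed coefficients, or against a degeneration to $K$-trivial surfaces where the answer is known. Algebraicity of $Y_{J,s}, Z_{J,s}, S_{J,s}$ is harder still; the natural approach is to exhibit them as solutions of algebraic equations forced by the Galois symmetry suggested by the $\eps_\rho$-twist, thereby reducing their algebraicity to that of the rank-one series $Y_s, Z_s$ conjectured in \cite{MOP3}. For this reason the structural product form is provable from Mochizuki's formula together with \cite{EGL}, whereas the explicit formulae and the algebraicity remain, at present, verified only in examples.
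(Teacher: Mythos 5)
Your argument, as written, cannot close because of the step where you assert that ``the structural product form is provable from Mochizuki's formula together with \cite{EGL}.'' That is exactly where the paper stops short, and deliberately so: this statement is a \emph{conjecture} in the paper, not a theorem. What Mochizuki's formula plus EGL-type universality actually yields is Theorem \ref{structhm1}: the generating function $\mathsf{Z}_S$ of the Hilbert-scheme integrals does factorize into universal series, but those series live in $1+q\,\Q(\!(t_1,\ldots,t_{\rho-1})\!)[[q]]$, i.e.\ they depend on the equivariant parameters, and the virtual Segre number is recovered only after applying $\mathrm{Coeff}_{t_1^0}\cdots\mathrm{Coeff}_{t_{\rho-1}^0}$ to the product of the perturbative term with these series raised to Chern-number powers. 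Constant-term extraction is not multiplicative: the $t_i^0$-coefficient of a product of Laurent series is not the product of $t_i^0$-coefficients, so the clean $J$-indexed factorization of Conjecture \ref{conj1} --- single-variable series in $z^{\frac{1}{2}}$, the prefactor $\rho^{2-\chi+K^2}$, the sign $(-1)^{|J|\chi}$, the twist $\eps_\rho^{\|J\|Kc_1}$ --- does \emph{not} follow from your reduction; it is precisely the unproven content, which the paper supports only by toric localization computations to finite order (Section \ref{sec:verif}) and by the rank-one case. The paper says as much: the conjectures were ``partially inspired by the universal formulae \emph{before} taking $\mathrm{Coeff}_{t_1^0}\cdots$''. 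Your indexing of the collapsed sum by $J\subset[\rho-1]$ is correct (only $a_1,\ldots,a_{\rho-1}$ carry Seiberg-Witten weights, and for minimal general type the basic classes are $0$ and $K$), but note that $\SW(K)=(-1)^{\chi}$, not $1$: the sign $(-1)^{|J|\chi}$ comes from this, not from the traceless normalization $\det(\E)^{-1/\rho}$, and the twist $\eps_\rho^{\|J\|Kc_1}$ is likewise part of the conjectured answer (it specializes $\prod_j \eps_\rho^{j a_j c_1}$ of Conjecture \ref{conj1strong} at $a_j=K$), not a formal consequence of that normalization.

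Two secondary corrections. First, $Q_s$, $R_s$, $T_s$ are not ``read off from the EGL Verlinde computation'': the only proved cases are rank one with $s=0,1$ (Theorem \ref{rk1prop}), obtained by pushing the $\mu$-classes down the Hilbert--Chow morphism and using Nakajima operators (Propositions \ref{rk1Witten} and \ref{rk1Wittenmatter}); for general $\rho$ and $s$ the displayed $Q_s,R_s,T_s$ (and $V_s,W_s,X_s$) are conjectural and only checked numerically. Second, Mochizuki's formula carries the hypotheses (iii)--(v) of Theorem \ref{structhm1}; Condition (iv) in particular is essential and is only arranged, for any given $c_2$, after twisting by $\O_S(\ell H)$ with $\ell\gg 0$, so even the structural statement is established only modulo this reduction and under assumption (v) on strict inequalities. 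Your closing assessment --- that the explicit formulae and the algebraicity of $Y_{J,s}$, $Z_{J,s}$, $S_{J,s}$ are at present verified only in examples --- matches the paper's actual state of knowledge; the error is in claiming that the first, structural part is a theorem rather than the conjecture it remains.
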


\begin{remark} \label{rk1mod} 
For $\rho=1$, we drop the assumption ``minimal of general type'' and we drop the subscript $J = \varnothing$ from the universal functions. In this case, the assumption $b_1(S) = 0$ is not needed when we require the sheaves of $M_S^H(1,c_1,c_2)$ to have fixed determinant so $M_S^H(1,c_1,c_2) \cong S^{[c_2]}$. 
In Section \ref{sec:generalconj}, we formulate a more general conjecture for \emph{any} smooth projective surface $S$ satisfying $p_g(S)>0$ and $b_1(S) = 0$. In this case, the formula is slightly more complicated and involves the Seiberg-Witten invariants of $S$.
\end{remark}

For $\rho=1$ and $L=u=0$, the first part of this conjecture follows from Theorem \ref{MOPthm1}.\footnote{In Conjecture \ref{conj1}, one can replace $\mu(L)$ by $x \, \mu(L)$, where $x$ is a formal variable. Then the corresponding virtual Segre invariants are given by the same formula with $L$ replaced by $x L$. When we say ``$L=u=0$'', we really mean ``$x=u=0$''.} In Theorem \ref{rk1prop}, we also prove formulae for $Q_{0}, S_{0}, T_{0}, Q_{1}, R_{1}, S_{1}, T_{1}$ by using the Hilbert-Chow morphism and Nakajima operators.

Surprisingly, the $\mu$-classes are related to the variables change $z=z(t)$. Indeed $z = z(t)$ \emph{equals} the inverse series of $R_s(z)$. We provide explicit formulae for $Y_{J,s}$, $Z_{J,s}$, $S_{J,s}$ for various values of $\rho,s$ in Section \ref{sec:alg}. In each case we get an algebraic expression.

For $\alpha=0$, the virtual Segre numbers reduce to (higher rank) Donaldson invariants
$$
\int_{[M_S^H(\rho,c_1,c_2)]^{\vir}} e^{\mu(L)  + \mu(\pt) u}.
$$
Then Conjecture \ref{conj1} (or rather its more general analog Conjecture \ref{conj1strong}) reduces to an algebraic version of the Mari\~{n}o-Moore conjecture for $\mathrm{SU}(\rho)$ Donaldson invariants \cite[(9.17)]{MM}, \cite[(10.107)]{LM}. The original Witten conjecture is an explicit formula for the $\mathrm{SU}(2)$ Donaldson invariants in terms of Seiberg-Witten invariants and was proved in the algebraic setting by the first-named author, H.~Nakajima, and K.~Yoshioka \cite{GNY3}. For ranks $\rho=3,4$, we determine the algebraic functions $Z_{J,s}$, $S_{J,s}$ allowing us to formulate explicit $\mathrm{SU}(3)$, $\mathrm{SU}(4)$ Witten conjectures (Section \ref{sec:Donaldson}). 

\subsection*{Virtual Verlinde numbers}

In order to generalize the line bundles $\mu(L) \otimes E^{\otimes r}$ on $S^{[n]}$ to any moduli space $M:=M_S^H(\rho,c_1,c_2)$, 
we recall the following construction from \cite[Ch.~8]{HL} (see also \cite{GNY2}). Recall that we assume $M$ has no strictly semistable sheaves and there exists a universal sheaf $\E$ on $S \times M$. We remove the latter assumption in Remark \ref{univdrop2}. Consider the map 
\begin{equation} \label{lambdaE}
\lambda_{\EE} : K(S) \rightarrow \Pic(M), \quad \alpha \mapsto \det \big( \pi_{M!} \big( \pi_S^* \alpha \cdot [\E] \big) \big)^{-1}.
\end{equation}
Let $c \in K(S)_{\mathrm{num}}$ be a class in the numerical Grothendieck group of $S$ such that $\rk(c) = \rho$, $c_1(c) = c_1$, and $c_2(c) = c_2$. Upon restriction to
\begin{equation*} 
K_c := \{ v \in K(S) \, : \, \chi(S,c \otimes v) = 0\},
\end{equation*}
the map $\lambda_{\EE} =: \lambda$ is independent of the choice of universal family $\EE$.

Fix $r \in \Z$, $L \in \Pic(S) \otimes \Q$ such that $\mathcal{L}:=L \otimes \det(c)^{-\frac{r}{\rho}} \in \Pic(S)$ and  $\rho$ divides $\mathcal{L}c_1 + r \big( \frac{1}{2} c_1(c_1-K) - c_2 \big)$. Take $v \in K(S)$ such that
\begin{itemize}
\item $\rk(v) = r$ and $c_1(v) = \mathcal{L}$, 
\item $c_2(v) = \frac{1}{2} \mathcal{L}(\mathcal{L}-K) +r\chi + \frac{1}{\rho} \mathcal{L} c_1 + \frac{r}{\rho}\big(\frac{1}{2}c_1(c_1-K) - c_2\big)$.
\end{itemize}
The second condition is equivalent to $v \in K_c \subset K(S)$. We define
\begin{equation} \label{muLEr}
\mu(L) \otimes E^{\otimes r} := \lambda(v).
\end{equation}
When $\rho = 1$ and $c_1=0$, this definition coincides with our previous definition of $\mu(L) \otimes E^{\otimes r}$ on $M_S^H(1,0,n) \cong S^{[n]}$ by \cite[Rem.~5.3(2)]{Got1}. For $r=0$, this recovers the definition of the Donaldson line bundle $\mu(L)$ studied in \cite{GNY2, GKW} by \cite[Rem.~5.3(1)]{Got1}.

Denote by $\O_M^{\vir}$ the virtual structure sheaf of $M$. We consider the virtual holomorphic Euler characteristics 
$$
\chi^{\vir}(M, \mu(L) \otimes E^{\otimes r}) := \chi(M, \mu(L) \otimes E^{\otimes r} \otimes \O_{M}^{\vir}).
$$
\begin{conjecture} \label{conj2}
Let $\rho \in \Z_{>0}$ and $r \in \Z$. There exist $G_r$, $F_{r} \in \C[[w]]$, $A_{J,r}$, $B_{J,r} \in \C[[w^{\frac{1}{2}}]]$ for all $J \subset [\rho-1]$ with the following property.\footnote{These universal functions depend on $\rho$ and $r$. We suppress the dependence on $\rho$.} Let $S$ be a minimal surface of general type with $b_1(S) = 0$ and $p_g(S)>0$, and let $L \in \Pic(S)$. Suppose $M:=M_S^H(\rho,c_1,c_2)$ contains no strictly semistable sheaves. Then the virtual Verlinde number $\chi^{\vir}(M, \mu(L) \otimes E^{\otimes r})$ equals the coefficient of $w^{\frac{1}{2}\vd(M)}$ of
\begin{align*}
\rho^{2 - \chi+K^2} \, G_{r}^{\chi(L)} \, F_{r}^{\frac{1}{2} \chi} \sum_{J \subset [\rho-1]} (-1)^{|J| \chi} \, \eps_{\rho}^{\|J\| K c_1} \, A_{J,r}^{K L} \, B_{J,r}^{K^2}.
\end{align*}
Furthermore, $A_{J,r}$, $B_{J,r}$ are all algebraic functions.
\end{conjecture}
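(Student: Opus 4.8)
The plan is to express the virtual Verlinde number as a tautological integral over $[M]^{\vir}$, to apply Mochizuki's formula in its rank-$\rho$ form so as to reduce the computation to Seiberg-Witten data together with integrals over products of Hilbert schemes of points, and finally to invoke Ellingsrud-G\"ottsche-Lehn universality. The shape of the answer---in particular the finite sum over $J \subset [\rho-1]$ and the roots of unity---should emerge from the especially simple Seiberg-Witten structure of minimal surfaces of general type with $p_g>0$.

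First I would apply virtual Hirzebruch-Riemann-Roch to the virtual structure sheaf, obtaining
\[
\chi^{\vir}(M, \mu(L) \otimes E^{\otimes r}) = \int_{[M]^{\vir}} \ch(\lambda(v)) \, \td^{\vir}(M),
\]
where $\lambda(v) = \mu(L) \otimes E^{\otimes r}$ and $\td^{\vir}(M)$ denotes the Todd class of the virtual tangent bundle. Both $\ch(\lambda(v))$ and $\td^{\vir}(M)$ are universal expressions in the Chern classes of $\E$, so the integrand is a tautological class of exactly the type to which Mochizuki's formula applies.

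Next I would invoke Mochizuki's formula, which rewrites such a tautological integral as a sum over Seiberg-Witten basic classes and $\rho$-fold splittings of the Chern data, each term being a residue of an integral over a product $\prod_i S^{[n_i]}$ of Hilbert schemes. For a minimal surface of general type with $p_g>0$ the only Seiberg-Witten basic classes are $0$ and $K$, each with invariant $\pm 1$, so the Seiberg-Witten sum becomes finite; the remaining choice of which graded piece of the splitting carries the class $K$ is indexed precisely by the subsets $J \subset [\rho-1]$. The roots of unity entering through the $\det(\E)^{-1/\rho}$ twist then produce the weight $\eps_\rho^{\|J\| K c_1}$, the sign $(-1)^{|J|\chi}$, and the prefactor $\rho^{2-\chi+K^2}$. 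Applying the universality of \cite{EGL} to each product integral, one sees that it is a universal polynomial in $\chi$, $K^2$, $Kc_1$, $c_1^2$, $\chi(L)$, $KL$, $L^2$; summing the generating series assembles these into the claimed product form $G_r^{\chi(L)} F_r^{\frac{1}{2}\chi}$ times $\sum_J (-1)^{|J|\chi} \eps_\rho^{\|J\| K c_1} A_{J,r}^{KL} B_{J,r}^{K^2}$, with $G_r, F_r, A_{J,r}, B_{J,r}$ obtained as explicit residues of products of the EGL universal series. For $\rho=1$ this reduces to Theorem \ref{EGLthm}, a useful consistency check.

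The main obstacle will be proving that $A_{J,r}$ and $B_{J,r}$ are \emph{algebraic}. Their existence and the product structure follow from the steps above, but the residue description exhibits them only as formal power series. This difficulty is already present at $\rho=1$, where even the algebraicity of $B_r$ (and hence of $A_r = B_{-r}/B_r$) is open, mirroring the conjectural algebraicity of the Segre functions $Y_s, Z_s$. For small $\rho$ one can compute the series explicitly and recognize them as algebraic, but I expect a uniform proof would require either closed forms extracted through the change of variables that already linearizes the proven series $V_s, W_s, X_s$ and $g_r, f_r$, or the establishment of a genuinely virtual, higher-rank Segre-Verlinde correspondence reducing $A_{J,r}, B_{J,r}$ to their Segre counterparts---precisely the kind of statement that remains conjectural here.
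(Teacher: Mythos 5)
You have correctly identified the paper's pipeline---virtual Hirzebruch--Riemann--Roch plus Grothendieck--Riemann--Roch to reduce to a slant-product polynomial $P(\E)$, then Mochizuki's formula (Theorem \ref{mocthm}), then EGL-type universality on products of Hilbert schemes---and this is exactly Steps 1--3 of the paper's Theorem \ref{structhm2}. But that pipeline does not prove the statement, and indeed the paper does not claim it does: Conjecture \ref{conj2} is genuinely a conjecture there. The gap is your assembly step, ``summing the generating series assembles these into the claimed product form.'' What universality gives (Theorem \ref{structhm2}) is a product of universal series $B_{\underline{L^2},r}^{L^2} B_{\underline{LK},r}^{LK} B_{\underline{K^2},r}^{K^2}\cdots$ living in $\Q(\!(t_1,\ldots,t_{\rho-1})\!)[[q]]$, and the Verlinde number is then $\mathrm{Coeff}_{t_1^0}\cdots\mathrm{Coeff}_{t_{\rho-1}^0}$ of the coefficient of $w^{\frac{1}{2}\vd(M)}$ of this expression. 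The operations $\mathrm{Coeff}_{t_i^0}$ do not respect products, so there is no reason---and no known argument---why the result should again factor as $G_r^{\chi(L)} F_r^{\frac{1}{2}\chi}\sum_J (-1)^{|J|\chi}\eps_\rho^{\|J\|Kc_1} A_{J,r}^{KL} B_{J,r}^{K^2}$ with universal $A_{J,r}, B_{J,r}\in\C[[w^{\frac{1}{2}}]]$. That post-residue factorization, including the prefactor $\rho^{2-\chi+K^2}$ and the weights $\eps_\rho^{\|J\|Kc_1}$ (which you assert ``emerge from the $\det(\E)^{-1/\rho}$ twist'' but which in the paper are part of the conjectured reorganization via $a_j\in\{0,K\}$, $\SW(0)=1$, $\SW(K)=(-1)^\chi$), is precisely the content of Conjectures \ref{conj2strong} and \ref{conj2}; the paper supports it only by toric localization computations in ranks $\rho\le 4$ up to bounded virtual dimension. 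So the open part is not merely algebraicity, as your last paragraph suggests, but already the existence of the product structure itself.

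Two secondary points. First, Mochizuki's formula and hence Theorem \ref{structhm2} require hypotheses beyond those in the conjecture: $\rho>1$, the Hilbert-polynomial bound (iii), the inequality $\chi(\rho,c_1,\frac{1}{2}c_1^2-c_2)>(\rho-2)\chi$ (iv), and strictness of the Seiberg--Witten inequalities (v); condition (iv) is essential (though arrangeable after twisting by $\O(\ell H)$), while (iii) and (v) can only conjecturally be dropped. Your sketch silently assumes the reduction to Hilbert-scheme integrals is unconditional. Second, a technically important step you omit is dividing by the perturbative term $\mathsf{Pert}=\widetilde\Psi(\boldsymbol{a},\boldsymbol{0},\boldsymbol{t})$ before invoking universality---the multiplicativity \eqref{mult} under disjoint unions, which is what licenses the EGL argument (in its enhanced form for products of Hilbert schemes), holds for the normalized series $\mathsf{Z}_S$, with $\mathsf{Pert}$ computed separately in closed form; and the universal-sheaf assumption is removed via the stack of oriented sheaves, not for free.
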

For $\rho=1$, the first part of this conjecture is Theorem \ref{EGLthm} (and Remark \ref{rk1mod} applies). We present a stronger version of this conjecture, for any smooth projective surface with $p_g(S)>0$ and $b_1(S) = 0$, in Section \ref{sec:generalconj}.
If $S$ is a $K3$ surface, then $M$ is smooth of expected dimension. Moreover, $M$ is holomorphic symplectic and deformation equivalent to $S^{[\frac{1}{2}\vd(M)]}$ by \cite{OG,Huy, Yos}. As shown in \cite{GNY2}, using a result of A.~Fujiki, the Verlinde numbers of $M$ can be expressed in terms of those of $S^{[\frac{1}{2}\vd(M)]}$. Combining with Theorem \ref{EGLthm}, one obtains
\begin{align}
\begin{split} \label{fg}
G_r(w) &= g_{r/\rho}(w) = 1+v, \\
F_r(w) &= f_{r/\rho}(w) = (1+v)^{\frac{r^2}{\rho^2}}(1+\tfrac{r^2}{\rho^2} v)^{-1},
\end{split}
\end{align}
where we applied the formal variable change
$w = v(1+v)^{r^2 / \rho^2-1}.$ 
Hence Conjecture \ref{conj2} is true for $K3$ surfaces, which determines $G_r$ and $F_r$. 

\subsection*{Virtual Segre-Verlinde correspondence}

Let $\rho \in \Z_{>0}$ and $s \in \Z$. The universal functions of Conjectures \ref{conj1} and \ref{conj2}, cf.~\eqref{fg}, satisfy
\begin{align*}
f_{r/\rho}(w) &= V_s(z)^{\frac{s}{\rho} (\rho^{\frac{1}{2}} - \rho^{-\frac{1}{2}})^2} \, W_s(z)^{-\frac{4s}{\rho}} \, X_s(z)^2, \\
g_{r/\rho}(w) &= V_s(z) \, W_s(z)^2,
\end{align*}
where $s = \rho + r$ and we use the following formal variable changes
\begin{equation} \label{varchange2}
w = v(1+v)^{\tfrac{r^2}{\rho^2}-1}, \quad z = t (1+(1-\tfrac{s}{\rho}) t)^{1-\tfrac{s}{\rho}}, \quad v = t( 1- \tfrac{r}{\rho} t )^{-1}.
\end{equation}

\begin{conjecture} \label{conj3} 
For any $\rho>0$, $r \in \Z$, $s=\rho+r$, and under the formal variable change \eqref{varchange2}, we have 
\begin{align*}
A_{J,r}(w^{\frac{1}{2}}) = W_{s}(z) \, Y_{J,s}(z^{\frac{1}{2}}), \quad  B_{J,r}(w^{\frac{1}{2}}) = Z_{J,s}(z^{\frac{1}{2}}), 
\end{align*}
for all $J \subset [\rho-1]$.\footnote{Recall that the series $A_{J,r}, B_{J,r}$ and $Y_{J,s}, Z_{J,s}$ depend on $w^{\frac{1}{2}}$ and $z^{\frac{1}{2}}$, whereas $W_{s}$ depends on $z$ (Conj.~\ref{conj1} and \ref{conj2}). For the former we use the coordinate transformation $w^{\frac{1}{2}} = v^{\frac{1}{2}}(1+v)^{\frac{1}{2}(r^2/\rho^2-1)}$ etc.}
\end{conjecture}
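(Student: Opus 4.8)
The plan is to deduce Conjecture~\ref{conj3} from the classical ($\rho=1$) Segre--Verlinde correspondence by transporting both generating series through Mochizuki's formula. Recall the master formulas, obtained earlier in the paper from \cite{Moc}, which express both the virtual Segre series $\int_{[M]^{\vir}} c(\alpha_M)\exp(\mu(L)+\mu(\pt)u)$ and the virtual Verlinde series $\chi^{\vir}(M,\mu(L)\otimes E^{\otimes r})$ as finite sums over Seiberg--Witten basic classes, weighted by Seiberg--Witten invariants, of residue-type generating functions of tautological integrals over products of Hilbert schemes $S^{[n_1]}\times\cdots\times S^{[n_\rho]}$. For a minimal surface of general type with $p_g(S)>0$ the only basic classes are $0$ and $K$, each with Seiberg--Witten invariant $\pm 1$, and distributing $K$ among the $\rho$ channels of the formula (subject to the fixed total $c_1$) is exactly what produces the index set $J\subseteq[\rho-1]$ together with the weights $(-1)^{|J|\chi}\,\eps_\rho^{\|J\|Kc_1}$ in Conjectures~\ref{conj1} and~\ref{conj2}. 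By the universality of \cite{EGL}, each of $Y_{J,s},Z_{J,s}$ and $A_{J,r},B_{J,r}$ is therefore a fixed universal combination of the \emph{classical} rank-one Segre, respectively Verlinde, universal series attached to the individual channels.

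First I would make the channel bookkeeping explicit, writing the $J$-summand of each master formula as a product over the $\rho$ channels in which a channel contributes a Hilbert-scheme Segre (resp.\ Verlinde) factor carrying the canonical-class insertion precisely when it lies in $J$. This is where the factors $\tfrac{s}{\rho}$ and $\tfrac{r}{\rho}$ in the variable change \eqref{varchange2} come from: the normalisation by $\det(\E)^{-1/\rho}$ on the Segre side, and the twist by $\det(c)^{-r/\rho}$ entering the definition \eqref{muLEr} on the Verlinde side, each rescale the effective rank seen by a channel by $1/\rho$, turning the classical parameters $s=1+r$ and $w=v(1+v)^{r^2-1}$ into $s=\rho+r$ and $w=v(1+v)^{r^2/\rho^2-1}$.

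Next I would invoke the classical correspondence channel by channel. The bulk relations $g_{r/\rho}=V_sW_s^2$ and the displayed formula for $f_{r/\rho}$ in terms of $V_s,W_s,X_s$ are already established in the excerpt, so the only remaining content is the matching of the $K$-dependent factors of channels lying in $J$. After the variable change this reduces, channel by channel, to the classical identities $A_r(w)=W_s(z)Y_s(z)$ and $B_r(w)=Z_s(z)$ of Johnson and Marian--Oprea--Pandharipande. The single overall factor $W_s$ in $A_{J,r}=W_sY_{J,s}$ is $J$-independent and survives the sum; I expect it to arise from the cross term $\mathcal{L}(\mathcal{L}-K)$ in the prescribed $c_2(v)$ of \eqref{muLEr} together with the determinant twist, exactly as in the classical case. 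Finally I would check that the half-integer powers $w^{1/2},z^{1/2}$, which enter only through the $K$-linear insertions $KL$ and through $K^2$, are matched by the induced substitution $w^{1/2}=v^{1/2}(1+v)^{(r^2/\rho^2-1)/2}$, with the branch of the square root fixed by the parity weight $(-1)^{|J|\chi}$.

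The hard part is twofold. The genuinely open input is the classical Johnson--Marian--Oprea--Pandharipande correspondence $A_r=W_sY_s$, $B_r=Z_s$ itself, which is known only in low rank; so the cleanest rigorous outcome is that Conjecture~\ref{conj3} \emph{reduces to} this classical correspondence together with the Mochizuki reduction, rather than an unconditional theorem. Granting that input, the technical heart is to verify that the two independent determinant normalisations --- $\det(\E)^{-1/\rho}$ on the Segre side and $\det(c)^{-r/\rho}$ on the Verlinde side --- conspire to produce precisely the single intertwined variable change \eqref{varchange2}, and that the channel-wise matching holds to all orders in $z^{1/2}$ and $w^{1/2}$ simultaneously for every $J\subseteq[\rho-1]$. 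This is delicate because the two series live in different variables and the correspondence must respect both the $J$-dependent canonical insertions and the square-root branches.
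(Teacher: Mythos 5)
The first thing to correct is the status of the statement: Conjecture~\ref{conj3} is a \emph{conjecture} in the paper and is nowhere proved there. The authors' support for it consists of (a) the rank-one case, where it is precisely the Johnson--Marian--Oprea--Pandharipande conjecture; (b) the $J$-independent part of the correspondence, $G_r=g_{r/\rho}$ and $F_r=f_{r/\rho}$, which is rigorous for $K3$ surfaces via deformation equivalence of $M$ to $S^{[\frac{1}{2}\vd(M)]}$ and yields the displayed relations between $f_{r/\rho},g_{r/\rho}$ and $V_s,W_s,X_s$ only after inserting the \emph{conjectural} closed forms of Conjecture~\ref{conj1}; and (c) direct numerical verification: both sides are computed independently through Theorems~\ref{structhm1} and~\ref{structhm2} together with Atiyah--Bott localization on $S^{[\boldsymbol{n}]}$ for toric surfaces, confirming the identity up to virtual dimension $18$ in rank $2$, up to virtual dimension $12$ in rank $3$ (for $s\in\{-3,\ldots,6\}$, $r\in\{-6,\ldots,3\}$), while in rank $4$ the correspondence is \emph{assumed} to generate Verlinde data. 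So even a fully successful version of your argument could only be a conditional reduction, as you partly concede; but the reduction you propose has a genuine gap.

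The gap is your central factorization claim. Mochizuki's integrand over $S^{[n_1]}\times\cdots\times S^{[n_\rho]}$ does \emph{not} split channel by channel: the denominator $Q(\I_1(a_1)\otimes\frakT_1,\ldots,\I_\rho(a_\rho)\otimes\frakT_\rho)$ is the equivariant Euler class of the cross-terms $R\hom_\pi(\I_i(a_i)\otimes\frakT_i,\I_j(a_j)\otimes\frakT_j)$ with $i\neq j$, the factors $e(\O(a_j)^{[n_j]}\otimes\frakT_j\frakT_i^{-1})$ likewise couple distinct Hilbert scheme factors, and the final operation $\mathrm{Coeff}_{t_1^0}\cdots\mathrm{Coeff}_{t_{\rho-1}^0}$ entangles all channels at once. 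Consequently $Y_{J,s},Z_{J,s},A_{J,r},B_{J,r}$ are not ``fixed universal combinations of the classical rank-one series attached to the individual channels'': by \eqref{Jseries} they contain the interaction series $Z_{jk,s}$, $B_{jk,r}$ attached to the pairings $a_ja_k$, which have no rank-one counterpart, and for $\rho=3,4$ their coefficients lie in $\Q(i,3^{\frac{1}{2}})$ resp.\ $\Q(i,2^{\frac{1}{2}})$ (Section~\ref{sec:Gal}), whereas the classical rank-one series are rational. A useful sanity check: if your channel-wise reduction were valid, Conjecture~\ref{conj3} would follow from the classical $A_r=W_sY_s$, $B_r=Z_s$ in exactly the range where the latter is known, yet the authors could only verify it order by order by brute force --- strong evidence that no such reduction exists in the paper or in the literature it cites. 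What your bookkeeping does get right is the combinatorial origin of the index set $J$ (distributing the basic classes $0$ and $K$ over the $\rho-1$ Seiberg--Witten sums, with weights $(-1)^{|J|\chi}\eps_\rho^{\|J\|Kc_1}$) and the heuristic source of the $1/\rho$ rescalings in \eqref{varchange2}; but the assertion that the branch of $w^{\frac{1}{2}}$ is ``fixed by the parity weight $(-1)^{|J|\chi}$'' conflates a prefactor of the $J$-sum with the substitution $w^{\frac{1}{2}}=v^{\frac{1}{2}}(1+v)^{\frac{1}{2}(r^2/\rho^2-1)}$, which is what the paper's footnote actually prescribes.
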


\subsection*{Universal function}

We express the virtual Segre and Verlinde numbers for surfaces $S$ satisfying $b_1(S) = 0$ and $p_g(S)>0$ in terms of (descendent) Donaldson invariants of $S$. Then we apply Mochizuki's formula, which expresses Donaldson invariants in terms of Seiberg-Witten invariants and intersection numbers on products of Hilbert schemes of points. This leads to a universal function which essentially determines all virtual Segre and Verlinde numbers (Theorems \ref{structhm1} and \ref{structhm2}). Calculating these intersection numbers, up to a certain number of points, allows us to verify Conjectures \ref{conj1}, \ref{conj2}, \ref{conj3} (and their more general analogs Conjectures \ref{conj1strong}, \ref{conj2strong}) up to a certain order in several examples for ranks $\rho \leq 4$. The precise list of verifications can be found in Section \ref{sec:verif}. 

We call a virtual Segre/Verlinde number \emph{canonical} when $H = K_S$, $c_1$ is a multiple of $K_S$ such that $\gcd(\rho, Hc_1)=1$, and $\alpha \in \Z[\O(K_S)] \subset K(S)$ and $L$ is a power of $\O(K_S)$. Let $e(S)$ and $\sigma(S)$ denote the topological Euler characteristic and signature of $S$. We then prove the following:
\begin{theorem} \label{thminv}
Canonical virtual Segre and Verlinde numbers of smooth projective surfaces $S$ satisfying $b_1(S) = 0$ and $K_S$ very ample only depend on $e(S)$ and $\sigma(S)$.
\end{theorem}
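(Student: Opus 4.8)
The plan is to reduce everything to the universal function supplied by the structure theorems (Theorems \ref{structhm1} and \ref{structhm2}) and then to observe that, in the canonical situation, every piece of $S$-dependent input to that function is controlled by $K^2$ and $\chi$ alone. First I would record that $K_S$ very ample forces $S$ to be minimal of general type with $p_g>0$: very ampleness implies $K_S$ is ample, hence $K_S C>0$ for every curve $C$, which rules out $(-1)$-curves (so $S$ is minimal) and gives $K^2>0$ (so $S$ is of general type), while $h^0(K_S)\geq 3$ gives $p_g>0$. In particular the hypotheses of the structure theorems are met, and the relevant Seiberg-Witten invariants are the classical ones for minimal surfaces of general type with $b_2^+=2p_g+1>1$: the only basic classes are $\pm K_S$, and $\SW(\pm K_S)$ depends only on $\chi$.

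Next I would unwind the canonical constraints. With $H=K_S$, $c_1=mK_S$, $L=\O(nK_S)$, and $\alpha=\sum_j a_j\,[\O(jK_S)]\in\Z[\O(K_S)]$, every cohomological input appearing in the universal function is a $\Z$-multiple of $K:=K_S$. Indeed $c_1(\alpha)=(\sum_j j a_j)K$, so $c_1(\alpha)^2$, $c_1(\alpha)K$, and, via Riemann-Roch on $S$, also $c_2(\alpha)=\tfrac12(\sum_j j a_j)^2 K^2-\tfrac12(\sum_j j^2 a_j)K^2$ are fixed multiples of $K^2$ determined by the discrete data $(a_j)$; likewise $c_1^2=m^2K^2$, $c_1K=mK^2$, $L^2=n^2K^2$, $LK=nK^2$, and $\chi(L)=\chi+\tfrac12 n(n-1)K^2$. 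Crucially, because the basic classes are $a=\pm K$, the Seiberg-Witten terms involve only $a^2=K^2$, $aK=\pm K^2$, $ac_1=\pm mK^2$, $aL=\pm nK^2$, again multiples of $K^2$. Hence, for fixed discrete data $(\rho,m,c_2,(a_j),n,r,u)$, the universal function evaluates to a universal expression in $K^2$ and $\chi$ only, the integer $c_2$ being part of the fixed data; in particular the virtual dimension $\vd(M)=2\rho c_2-(\rho-1)c_1^2-(\rho^2-1)\chi$ whose $z^{\vd/2}$- or $w^{\vd/2}$-coefficient is extracted is itself a function of $K^2$, $\chi$, and the discrete data.

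Finally I would invoke Noether's formula $\chi=\tfrac1{12}(K^2+e)$ and the Hirzebruch signature theorem $\sigma=\tfrac13(K^2-2e)$, which give $K^2=2e+3\sigma$ and $\chi=\tfrac14(e+\sigma)$. Thus $K^2$ and $\chi$ are themselves determined by $e(S)$ and $\sigma(S)$, so any two surfaces with $K_S$ very ample, $b_1=0$, and the same $(e,\sigma)$ produce identical canonical virtual Segre and Verlinde numbers for matched discrete data, which is the assertion of Theorem \ref{thminv}.

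I expect the main obstacle to be the Seiberg-Witten step rather than the bookkeeping: one must be sure that the structure theorems package \emph{all} of the $S$-dependence into the Seiberg-Witten invariants together with intersection numbers of the tautological and $\mu$-classes, under a genuinely $S$-independent universal function, so that no finer invariant of $S$ (for instance the full intersection form on $H^2(S,\Z)$) can leak in. This is precisely the content of Theorems \ref{structhm1} and \ref{structhm2} combined with the Ellingsrud-G\"ottsche-Lehn universality for integrals of tautological classes over products of Hilbert schemes, which forces those intersection numbers to be universal polynomials in $K^2$, $e$, and the pairwise intersections of the input classes; all of these collapse to multiples of $K^2$ in the canonical case once the basic classes are identified as $\pm K_S$.
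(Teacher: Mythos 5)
Your endgame bookkeeping is correct and matches the paper's: with $H=K$, $c_1=mK$, $\alpha\in\Z[\O(K_S)]$, $L$ a power of $\O(K)$, and the Seiberg--Witten basic classes being $0$ and $K$ (in Mochizuki's convention used here, with $\SW(0)=1$, $\SW(K)=(-1)^{\chi}$ --- your ``$\pm K_S$'' is the equivalent differential-geometric normalization), every input to the universal function is a multiple of $K^2$ or $\chi$, and Noether plus the signature theorem convert $(K^2,\chi)$ into $(e,\sigma)$. You also correctly isolate that no finer invariant of $S$ can leak in, which is exactly what the \cite{EGL}-style universality in Theorems \ref{structhm1}, \ref{structhm2} guarantees.

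The genuine gap is your sentence ``In particular the hypotheses of the structure theorems are met.'' They are not, for most canonical invariants. Condition (iv) of Theorem \ref{structhm1} is equivalent to $c_2 < \tfrac{1}{2}c_1(c_1-K)+2\chi$, which fails for all but finitely many $c_2$ once $c_1=mK$ is fixed; the paper explicitly flags this condition as \emph{essential}, so it cannot be waved through. Moreover condition (i) is not automatic (it is secured by the requirement $\gcd(\rho,Hc_1)=1$ built into the definition of ``canonical,'' which forces Gieseker and $\mu$-stability to coincide with no strictly semistables --- a hypothesis you never use), condition (ii) must be removed via Remarks \ref{univdrop1} and \ref{univdrop2}, and conditions (iii) and (v) (strictness of the Seiberg--Witten inequalities) can fail for particular $c_1$. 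The paper's proof is organized precisely around this obstacle: it observes that $-\otimes\O(\ell K)$ gives isomorphisms $M_S^K(\rho,\ell' K,c_2)\cong M_S^K(\rho,(\ell'+\rho\ell)K,c_2')$ leaving canonical Segre/Verlinde numbers unchanged, assembles all canonical numbers into a generating series $\mathsf{G}_{\rho,\boldsymbol{\lambda},\boldsymbol{\mu},n,S}$ summed over $\ell$ and $c_2$, restricts $\ell$ to a window $\{N,\ldots,N+\rho-1\}$ with $\gcd(\rho,\ell K^2)=1$, and lets $N\to\infty$: for $N\gg 0$ conditions (i), (iii), (v) hold automatically and the $c_2$-bound in (iv) becomes arbitrarily large, so the structure theorems apply modulo an arbitrarily high power of $z$. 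Without this twisting-and-limiting argument, Theorems \ref{structhm1} and \ref{structhm2} simply do not cover the moduli spaces in question, and your deduction does not go through; with it, your (correct) reduction to $(K^2,\chi)$ and hence $(e,\sigma)$ completes the proof exactly as in the paper.
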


\subsection*{Algebraicity, Galois actions, $K3$, virtual Serre duality.} 

In addition to the above-mentioned conjectures and results, this paper includes the following:
\begin{itemize}
\item We present several conjectural algebraic expressions for the remaining universal power series for ranks $\rho \leq 4$. These expressions have coefficients in Galois extensions of $\Q$ and are permuted under actions of corresponding Galois groups.
\item We present a new conjecture for Segre numbers of $K3$ surfaces (Conjecture \ref{conjK3}), which implies Conjecture \ref{conj1} for $S=K3$ and $L = u = 0$.\footnote{G.~Oberdieck \cite{Obe} recently proved Conj.~\ref{conjK3} using E.~Markman's work on monodromy operators.}
\item We conjecture and test relations among the universal power series suggested by virtual Serre duality (Conjecture \ref{conj:SDvir}). 
\item We discuss applications of our conjectures to surfaces containing a canonical curve with reduced connected components and we provide a blow-up formula.\footnote{The first-named author recently conjectured (and tested) blow-up formulae for the virtual Segre and Verlinde numbers of this paper \cite{Got2}. This provides further evidence for the conjectures in Sect.~\ref{sec:alg}.} 
\end{itemize} 

\noindent \textbf{Acknowledgements.} We thank Alina Marian, Dragos Oprea, and Rahul Pandharipande for useful discussions. We are grateful to Dragos Oprea for suggesting to specialize our conjectures to rigid sheaves as in \cite{MOP3}. Although we were unable to adapt the methods of loc.~cit.~to our setting, his suggestion led to the discovery of Conjecture \ref{conjK3} for $K3$ surfaces. M.K.~is supported by NWO grant VI.Vidi.192.012.

\section{Universal function} 
\addtocontents{toc}{\protect\setcounter{tocdepth}{2}}

\subsection{Mochizuki's formula} \label{sec:moc}

Let $(S,H)$ be a smooth polarized surface with $b_1(S) = 0$. Let $\rho \in \Z_{>0}$, $c_1 \in H^2(S,\Z)$ algebraic, and $c_2 \in H^2(S,\Z)$. We denote by $M:=M_S^H(\rho,c_1,c_2)$ the moduli space of rank $\rho$ Gieseker $H$-semistable sheaves on $S$ with Chern classes $c_1,c_2$. We assume $M$ does not contain strictly semistable sheaves. We also assume there exists a universal sheaf $\EE$ on $S \times M$, though we show in Remarks \ref{univdrop2} and \ref{univdrop1} how to drop this assumption. Then $M$ has a natural perfect obstruction theory with virtual tangent bundle  \cite{Moc}
$$
T_M^{\vir} = R\hom_{\pi_M}(\EE,\EE)_0[1],
$$ 
where $\pi_M : S \times M \rightarrow M$ is projection, $R\hom_{\pi_M} := R\pi_{M*} \circ R\hom$, and $(\cdot)_0$ denotes trace-free part. The resulting virtual class $[M]^{\vir}$ has virtual dimension given by \eqref{vd}. 

For any $\sigma \in H^*(S,\Q)$ and $k \geq 2$, we consider the slant product
$$
\ch_k(\EE) / \mathrm{PD}(\sigma) \in H^*(M,\Q).
$$
Suppose $P(\EE)$ is any polynomial expression in slant products (for various choices of $\sigma$ and $k$). Then we refer to
$$
\int_{[M]^{\vir}} P(\EE) \in \Q
$$
as a (descendent) Donaldson invariant of $S$. For any rank $\rho \in \Z_{>1}$, Mochizuki derived a remarkable expression for the Donaldson invariants of smooth projective surfaces $S$ with holomorphic 2-form \cite[Thm.~7.5.2]{Moc}. We will now present his formula.

When $p_g(S)>0$, we denote the Seiberg-Witten invariant of $S$ in class $a \in H^2(S,\Z)$ by $\SW(a)$. We follow Mochizuki's convention 
$\SW(a) = \widetilde{\SW}(2a-K_S)$, where $\widetilde{\SW}(b)$ is the usual Seiberg-Witten invariant for class $b \in H^2(S,\Z)$ from differential geometry. There are finitely many $a \in H^2(S,\Z)$ such that $\SW(a) \neq 0$ and such classes are called Seiberg-Witten basic classes. Seiberg-Witten basic classes $a$ satisfy $a^2 = a K_S$ \cite[Prop.~6.3.1]{Moc}; i.e.~the virtual dimension of the linear system $|a|$ is zero.

For any non-negative integers $\bsn = (n_1, \ldots, n_\rho)$, we define
$$
S^{[\bsn]} := S^{[n_1]} \times \cdots \times S^{[n_\rho]}.
$$
For a line bundle $L$ on $S$, we denote by $L^{[n_i]} := p_* q^* L$ the corresponding tautological bundle on $S^{[n_i]}$, where $q : \cZ_i \rightarrow S$ and $p : \cZ_i \rightarrow S^{[n_i]}$ are the projections from the universal subscheme $\cZ_i \subset S \times S^{[n_i]}$. We denote the pull-back of $L^{[n_i]}$ to $S^{[\bsn]}$ along projection by the same symbol. Writing $\I_i$ for the universal ideal sheaf on $S \times S^{[n_i]}$, we denote its pull-back to $S \times S^{[\bsn]}$ by the same symbol as well. Furthermore, we denote its twist by the pull-back of a divisor class $a_i$ on $S$ by $\I_i(a_i)$.

We consider the trivial torus action of $T = (\C^{*})^{\rho-1}$ on $S^{[\bsn]}$. We denote by 
$$
\t_1, \ldots, \t_{\rho-1} \in X(T) \cong \Z^{\rho-1}
$$
the degree one characters generating the character group $X(T)$. Then any character of $T$ is of the form $\bigotimes_i \t_i^{w_i}$ for some $w_1, \dots, w_{\rho-1} \in \Z$. Any $T$-equivariant coherent sheaf $\F$ on $S^{[\bsn]}$ decomposes into eigensheaves
$$
\F = \bigoplus_{\boldsymbol{w} = (w_1, \ldots, w_{\rho-1})} \F_{\boldsymbol{w}} \otimes \bigotimes_i \t_i^{w_i}.
$$
We also equip $S \times S^{[\bsn]}$ with the trivial $T$-action and we regard projection $\pi : S \times S^{[\bsn]} \rightarrow S^{[\bsn]}$ as a $T$-equivariant morphism. Furthermore, we write
$$
H_T^*(\mathrm{pt},\Z) = \Z[t_1^{\pm 1}, \ldots, t_{\rho-1}^{\pm 1}],
$$
where $t_i := c_1^T(\t_i)$ denotes the $T$-equivariant first Chern class of $\t_i$. The following (rational) characters in $X(T) \otimes \Q$ play an important role in Mochizuki's formula
\begin{align}
\begin{split} \label{defT}
\frakT_i := \t_i^{-1} \otimes \bigotimes_{j<i} \t_j^{\frac{1}{\rho-j}}, \quad \frakT_\rho := \bigotimes_{j<\rho} \t_j^{\frac{1}{\rho-j}}, \quad T_j := c_1^{T}(\frak{T}_j),
\end{split}
\end{align}
for all $i=1, \ldots, \rho-1$ and $j=1, \ldots, \rho$.

For any $\ch \in H^*(S,\Q)$, we define
$$
\chi(\ch) := \int_S \ch \cdot \td(S)
$$
and we denote the corresponding Hilbert polynomial by $h_{\ch}(m) = \chi(\ch \cdot e^{mH})$.
Furthermore, for any $a \in H^2(S,\Z)$, we define $\chi(a) := \chi(e^a)$.  

Suppose $P(\E)$ is any polynomial expression in slant products such that $$P(\E) = P(\E \otimes \mathcal{L})$$ for any $\mathcal{L} \in \Pic(S \times M)$. Then $P(\E)$ is independent of the choice of universal sheaf. For any $T$-equivariant coherent sheaf $\F$ on $S \times S^{[\bsn]}$, we denote by $P(\F)$ the expression obtained by replacing $S \times M$ by $S \times S^{[\bsn]}$, $\E$ by $\F$, and all Chern characters by $T$-equivariant Chern characters. Furthermore, for any divisor classes $\bsa = (a_1, \ldots, a_{\rho})$, we define
\begin{align*}
&Q\Big( \I_1(a_1) \otimes \frak{T}_1, \ldots, \I_\rho(a_{\rho}) \otimes \frak{T}_{\rho} \Big) := \\
&\prod_{i<j} e\Big( - R\hom_\pi(\I_i(a_i) \otimes \frakT_i , \I_j(a_j) \otimes \frakT_j) - R\hom_\pi(\I_j(a_j) \otimes \frakT_j , \I_i(a_i) \otimes \frakT_i) \Big),
\end{align*}
where $e$ denotes $T$-equivariant Euler class and
$$
\pi : S \times S^{[\bsn]} \rightarrow S^{[\bsn]}
$$
is the projection. Following Mochizuki \cite[Sect.~7.5.2]{Moc}, for any non-negative integers $\bsn = (n_1, \ldots, n_\rho)$ and any divisor classes $\bsa = (a_1, \ldots, a_{\rho})$ on $S$, we define
\begin{align*}
&\widetilde{\Psi}(\bsa,\bsn,\boldsymbol{t}) := \Bigg( \prod_{i=1}^{\rho-1} t_i^{\sum_{j \geq i} \chi(1,a_j,\frac{1}{2}a_j^2 - n_j)} \Bigg) \Bigg( \prod_{i<j} \frac{1}{(T_j - T_i)^{\chi(a_j)}} \Bigg) \\
&\cdot \frac{P\Big( \bigoplus_{i=1}^{\rho} \I_i(a_i) \otimes \frak{T}_i \Big)}{Q\Big( \I_1(a_1) \otimes \frak{T}_1, \ldots, \I_\rho(a_\rho) \otimes \frak{T}_\rho \Big)} \Bigg( \prod_{i=1}^{\rho-1} e(\O(a_i)^{[n_i]}) \Bigg) \Bigg( \prod_{i<j} e(\O(a_j)^{[n_j]} \otimes \frakT_j \frakT_i^{-1}) \Bigg).
\end{align*}
Finally, we define
\begin{align*}
\Psi(\bsa,\bsn) := \mathrm{Coeff}_{t_{1}^0} \cdots \mathrm{Coeff}_{t_{r-1}^0} \widetilde{\Psi}(\bsa,\bsn,\boldsymbol{t}),
\end{align*}
where $\mathrm{Coeff}_{t_i^0}(\cdot)$ takes the coefficient of $t_{i}^{0}$ after expansion of $(\cdot)$ as a Laurent series in $t_i$. Recall from the introduction that we write $K:=K_S$, $\chi:=\chi(\O_S)$.

\begin{theorem}[Mochizuki] \label{mocthm}
Let $(S,H)$ be a smooth polarized surface such that $b_1(S) = 0$ and $p_g(S) > 0$. Let $\rho \in \Z_{>1}$, $c_1 \in H^2(S,\Z)$ algebraic, $c_2 \in H^4(S,\Z)$, and consider $M:=M_S^H(\rho,c_1,c_2)$. Assume the following:
\begin{enumerate}
\item $M$ does not contain strictly semistable sheaves,
\item there exists a universal sheaf $\E$ on $S \times M$,
\item $h_{(\rho, c_1, \frac{1}{2} c_1^2-c_2)} / \rho > h_{e^{K}}$,
\item $\chi(\rho, c_1, \frac{1}{2} c_1^2-c_2) > (\rho-2) \chi$.
\end{enumerate}
Let $P(\EE)$ be a polynomial expression in slant products such that $P(\E) = P(\E \otimes \mathcal{L})$ for any $\mathcal{L} \in \Pic(S \times M)$. Then
\begin{align*}
\int_{[M]^{\vir}} P(\EE) = (-1)^{\rho-1} \rho \sum_{(a_1, \ldots, a_{\rho}) \atop (n_1, \cdots, n_\rho)} \prod_{i=1}^{\rho-1} \SW(a_i) \int_{S^{[\boldsymbol{n}]}} \Psi(\boldsymbol{a}, \boldsymbol{n}),
\end{align*}
where the sum is over all $(a_1, \ldots, a_{\rho}) \in H^2(S,\Z)^\rho$ and $(n_1, \ldots, n_\rho) \in \Z_{\geq 0}^\rho$ satisfying
\begin{align}
\begin{split} \label{Mochineq}
c_1 &= a_1 + \cdots + a_{\rho} \\
c_2 &= n_1 + \cdots + n_\rho + \sum_{1 \leq i<j \leq \rho } a_i a_j \\
h_{(1,a_i,\frac{1}{2}a_i^2 - n_i)} &< \frac{1}{\rho - i} \sum_{j>i} h_{(1,a_j,\frac{1}{2}a_j^2 - n_j)} \quad \forall i=1, \ldots, \rho-1.
\end{split}
\end{align}
\end{theorem}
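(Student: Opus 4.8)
Since this is Mochizuki's theorem, the plan is to follow a \emph{master-space plus virtual localization} strategy, running an induction on the rank $\rho$. The very shape of $\widetilde\Psi$ is the tell-tale sign of this approach: the denominator $Q$ is a product over pairs $i<j$ of equivariant Euler classes of $R\hom_\pi$-complexes, and the operation $\mathrm{Coeff}_{t_1^0}\cdots\mathrm{Coeff}_{t_{\rho-1}^0}$ is exactly the residue extraction produced by the Graber--Pandharipande virtual localization formula for a torus $T=(\C^*)^{\rho-1}$ acting on a moduli space whose fixed locus is a product of Hilbert schemes. The goal, then, is to realize $\int_{[M]^{\vir}}P(\EE)$ as a sum of $T$-fixed contributions of an auxiliary space whose fixed loci are the $S^{[\bsn]}$.

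First I would enlarge the moduli problem to a space of \emph{Bradlow-type pairs} $(E,\phi)$, a sheaf together with a section of $E\otimes N$ for a sufficiently positive auxiliary line bundle $N$, depending on a stability parameter. This is the device that makes the moduli spaces well-behaved and, crucially, gives a geometric wall-crossing. At each wall in the parameter space one builds a master space $\mathcal M$ carrying a $\C^*$-action whose fixed loci are the pair-moduli on the two sides of the wall together with ``flip loci'' where the sheaf acquires a destabilizing rank-$1$ sub- or quotient-sheaf. Equipping $\mathcal M$ with a perfect obstruction theory compatible with those of its fixed loci, virtual localization yields a wall-crossing identity expressing the difference of the two pair-invariants as a residue of integrals over the flip loci.

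Next I would iterate the wall-crossing, peeling off one rank-$1$ torsion-free summand $\I_i(a_i)$ at each step, until the sheaf splits completely as $\bigoplus_{i=1}^{\rho}\I_i(a_i)$ with $a_1+\cdots+a_\rho=c_1$ and $n_1+\cdots+n_\rho+\sum_{i<j}a_ia_j=c_2$. The totally split locus is $S^{[\bsn]}=\prod_i S^{[n_i]}$, and $T=(\C^*)^{\rho-1}$ scales the $\rho$ summands modulo the overall determinant, which is why the normalizing rational characters $\frakT_i$ of \eqref{defT} enter. Restricted to this locus, the equivariant tangent-obstruction complex of $M$ splits into a fixed part, reassembling $P\big(\bigoplus_i\I_i(a_i)\otimes\frakT_i\big)$ together with the tautological factors $e(\O(a_i)^{[n_i]})$ and $e(\O(a_j)^{[n_j]}\otimes\frakT_j\frakT_i^{-1})$, and a moving part equal to $Q(\I_1(a_1)\otimes\frakT_1,\dots)$, the product of $R\hom_\pi$-Euler classes between distinct summands, which is precisely the virtual normal bundle. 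The inequalities in \eqref{Mochineq} are then exactly the chamber conditions selecting the ordered splittings that survive, and the equivariant normalizations account for the $t_i$-exponents and the $(T_j-T_i)^{-\chi(a_j)}$ factors.

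The remaining, and to my mind hardest, ingredient is the appearance of the Seiberg--Witten weights $\prod_{i=1}^{\rho-1}\SW(a_i)$. The moduli of a rank-$1$ piece with first Chern class $a_i$ is $S^{[n_i]}$ times a Picard-type abelian factor, and its virtual contribution is controlled by $H^1(\O(a_i))$ against $H^0(\O(a_i))$; on a surface with $p_g>0$ this abelian integral is precisely Mochizuki's $\SW(a_i)=\widetilde\SW(2a_i-K)$ and forces $a_i$ to be a basic class with $a_i^2=a_iK$. Pinning down this identification, with the correct global prefactor $(-1)^{\rho-1}\rho$ and the attendant sign conventions, is where the deformation-theoretic bookkeeping is most delicate; once it is in place, everything else reduces to equivariant Chern-class algebra and a single application of the localization formula.
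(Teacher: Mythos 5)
The paper offers no proof of this statement: it is Mochizuki's theorem, imported verbatim (in the notation set up in Section \ref{sec:moc}) from \cite[Thm.~7.5.2]{Moc}, so the only in-paper ``proof'' is the citation, and your proposal has to be measured against Mochizuki's actual argument rather than anything in this paper. Judged that way, your sketch reconstructs the architecture correctly: Bradlow-type pairs with a varying stability parameter, master spaces at the walls carrying $\C^*$-actions, virtual localization producing wall-crossing terms supported on flip loci where a rank-one piece $\I_i(a_i)$ splits off, induction on the rank down to total splittings $\bigoplus_i \I_i(a_i)\otimes\frakT_i$ parametrized by $S^{[\bsn]}$, with the $R\hom_\pi$-Euler classes in $Q$ as virtual normal bundle contributions, and Seiberg--Witten factors arising from the abelian rank-one moduli, where $p_g>0$ forces $a_i^2=a_iK$ \cite[Prop.~6.3.1]{Moc}. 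Two caveats are worth flagging. First, your opening paragraph's picture of a single auxiliary space with a $(\C^*)^{\rho-1}$-action whose fixed loci are the $S^{[\bsn]}$ is only a heuristic: in Mochizuki's proof the variables $t_1,\ldots,t_{\rho-1}$ enter one at a time through a chain of rank-reducing $\C^*$-master-space wall-crossings, and the iterated residue $\mathrm{Coeff}_{t_1^0}\cdots\mathrm{Coeff}_{t_{\rho-1}^0}$ records exactly that iteration; your second and third paragraphs describe the iteration correctly, but the two pictures should not be conflated. Second, your plan never invokes hypotheses (3) and (4), which are not optional: roughly, (3) is the comparison of the reduced Hilbert polynomial with that of $\O(K)$ which kills $\Hom(E,\,\cdot\otimes K)$-type obstructions ($h^2$-vanishing) for the semistable sheaves involved, and together with the positivity (4) it guarantees enough sections that the pair moduli at one end of the parameter line dominate $M$ (virtually a projective bundle) --- this is the only mechanism by which $\int_{[M]^{\vir}}P(\EE)$ enters the wall-crossing at all --- while also ensuring that the walls and surviving chambers are exactly those encoded by the strict inequalities in \eqref{Mochineq}. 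As a roadmap your outline is faithful to the source, but a proof along these lines must locate precisely where (3) and (4) do their work; that bookkeeping, together with the deformation-theoretic normalization producing the prefactor $(-1)^{\rho-1}\rho$ which you rightly single out as delicate, constitutes the bulk of Mochizuki's argument.
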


\subsection{Main theorems} \label{sec:mainthms}

From Mochizuki's formula, we derive two theorems, which give universal functions for the virtual Segre and Verlinde numbers of smooth projective surfaces satisfying $b_1(S)=0$ and $p_g(S)>0$. 
\begin{theorem} \label{structhm1}
Let $\rho \in \Z_{>1}$ and $s \in \Z$. There exist 
$A_{\underline{c_1(\alpha)^2},s}$, $A_{\underline{c_1(\alpha)L},s}$, $A_{\underline{c_1(\alpha)K},s}$, $A_{\underline{c_2(\alpha)},s}$, $A_{\underline{L^2},s}$, $A_{\underline{LK},s}$, $A_{\underline{\pt},s}$, $A_{\underline{K^2},s}$, $A_{\underline{\chi},s}$, $A_{\underline{a_ic_1(\alpha)},s}$, $A_{\underline{a_iL},s}$, $A_{\underline{a_iK},s}$, $A_{\underline{a_ia_j},s} \in 1+ q\Q(\!(t_1,\ldots, t_{\rho-1})\!)[[q]]$ for all $1 \leq i \leq j \leq \rho$ with the following property.
Let $(S,H)$ be a smooth polarized surface such that $b_1(S) = 0$ and $p_g(S) > 0$. Let $c_1 \in H^2(S,\Z)$ algebraic, $c_2 \in H^4(S,\Z)$, and consider $M:=M_S^H(\rho,c_1,c_2)$. Let $\alpha \in K(S)$ with $\rk \alpha = s$ and $L \in \Pic(S)$. Assume the following:
\begin{enumerate}
\item[(i)] $M$ does not contain strictly semistable sheaves,
\item[(ii)] there exists a universal sheaf $\E$ on $S \times M$,
\item[(iii)] $h_{(\rho, c_1, \frac{1}{2} c_1^2-c_2)} / \rho > h_{e^{K}}$,
\item[(iv)] $\chi(\rho, c_1, \frac{1}{2} c_1^2-c_2) > (\rho-2) \chi$,
\item[(v)] for any $a_1, \ldots, a_\rho \in H^2(S,\Z)$ satisfying $a_1 + \cdots +a_\rho=c_1$, such that $a_1, \ldots, a_{\rho-1}$ are Seiberg-Witten basic classes and $a_i H \leq \frac{1}{\rho-i} \sum_{j>i} a_jH$ for all $i=1, \ldots, \rho-1$, these inequalities are strict.\footnote{Here $a_iH$ denotes the intersection number of the elements $a_i, H \in H^2(S,\Z)$.}
\end{enumerate}
Then $\int_{[M]^{\vir}} c(\alpha_M) \, \exp(\mu(L) + \mu(\pt)u)$ equals $\mathrm{Coeff}_{t_{1}^0} \cdots \mathrm{Coeff}_{t_{r-1}^0}$ of the coefficient of $z^{\frac{1}{2} \vd(M)}$ of the following expression
\begin{align*}
&(-1)^{\rho-1} \rho \sum_{(a_1, \ldots, a_{\rho})} \prod_{i=1}^{\rho-1} \SW(a_i) t_i^{\sum_{j \geq i} ( \frac{1}{2} a_j(a_j-K) + \chi  ) }  \prod_{i=1}^{\rho} (1+T_i)^{-\chi(\alpha \otimes \O_S(a_i - \frac{c_1}{\rho}))} e^{- (a_iL) \, T_i -\frac{1}{2} T_i^2 u} \\
&\cdot  \prod_{1 \leq i<j \leq \rho}  (T_j - T_i)^{\chi(a_j-a_i) - \chi(a_j)} (T_i-T_j)^{\chi(a_i-a_j)} A_{\underline{c_1(\alpha)^2},s}^{c_1(\alpha)^2}  A_{\underline{c_1(\alpha)L},s}^{c_1(\alpha)L}  A_{\underline{c_1(\alpha)K},s}^{c_1(\alpha)K} A_{\underline{c_2(\alpha)},s}^{c_2(\alpha)}  A_{\underline{L^2},s}^{L^2}  A_{\underline{LK},s}^{LK} A_{\underline{\pt},s}^u \\
&\cdot A_{\underline{K^2},s}^{K^2} \cdot (z^{-\frac{\rho^2- 1}{2}} A_{\underline{\chi},s})^{\chi} \prod_{i=1}^{\rho} A_{\underline{a_ic_1(\alpha)},s}^{a_ic_1(\alpha)} A_{\underline{a_iL},s}^{a_iL} A_{\underline{a_iK},s}^{a_iK} \cdot (z^{-\frac{\rho-1}{2}} A_{\underline{a_ia_i},s})^{a_i^2} \prod_{1 \leq i<j \leq \rho} ( zA_{\underline{a_ia_j},s})^{a_ia_j},
\end{align*}
where $T_i$ was defined in \eqref{defT}, the sum is over all $(a_1, \ldots, a_\rho) \in H^2(S,\Z)^\rho$ satisfying $a_1 + \cdots +a_\rho=c_1$ and $a_i H \leq  \frac{1}{\rho-i} \sum_{j>i} a_jH$ for all $i=1, \ldots, \rho-1$, and all power series are evaluated at $q=z^\rho$.
\end{theorem}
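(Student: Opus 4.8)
The plan is to feed the class $P(\E) = c(\alpha_M)\,\exp(\mu(L)+\mu(\pt)u)$ into Mochizuki's Theorem~\ref{mocthm}, repackage the resulting sum of integrals over products of Hilbert schemes as a generating series in $z$ (with $q=z^\rho$), and then invoke the universality of tautological integrals in the style of Ellingsrud--G\"ottsche--Lehn \cite{EGL} to factor that series into the asserted product of universal power series raised to intersection numbers. First I would check that Mochizuki's formula applies: since $\mu(\sigma)$ and $\ch(\alpha_M)$ are built only from $\E\otimes\det(\E)^{-1/\rho}$, the polynomial $P(\E)$ is invariant under $\E\mapsto\E\otimes\mathcal{L}$ for $\mathcal{L}\in\Pic(S\times M)$ and is a polynomial in slant products, while hypotheses (i)--(iv) are precisely assumed. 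Theorem~\ref{mocthm} then yields
\[
\int_{[M]^{\vir}}P(\E)=(-1)^{\rho-1}\rho\sum_{(\bsa,\bsn)}\prod_{i=1}^{\rho-1}\SW(a_i)\int_{S^{[\bsn]}}\Psi(\bsa,\bsn),
\]
the sum running over Seiberg--Witten basic configurations $\bsa=(a_1,\dots,a_\rho)$ with $\sum a_i=c_1$ and over $\bsn$ with $\sum n_i=c_2-\sum_{i<j}a_ia_j$, subject to Mochizuki's inequalities among Hilbert polynomials. Here assumption (v) is decisive: it forces the cohomological inequalities $a_iH\le\tfrac1{\rho-i}\sum_{j>i}a_jH$ to be strict, and since these already determine Mochizuki's inequalities regardless of the $n_i$, the index set of admissible $\bsa$ becomes independent of $\bsn$. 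Thus, for each fixed $\bsa$, the sum over $\bsn\in\Z_{\ge0}^\rho$ is unconstrained apart from fixing $\sum n_i$.

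\textbf{Generating series and bookkeeping.} Writing $q=z^\rho$ and using $\sum n_i=c_2-\sum_{i<j}a_ia_j$, I would recover the fixed-$c_2$ invariant as a coefficient of the full series $\sum_{\bsn\ge0}q^{\sum n_i}\int_{S^{[\bsn]}}\Psi(\bsa,\bsn)$. Expanding $c_1^2=\sum_i a_i^2+2\sum_{i<j}a_ia_j$, a direct comparison shows $\tfrac12\vd(M)-\big(-\tfrac{\rho^2-1}{2}\chi-\tfrac{\rho-1}{2}\sum_i a_i^2+\sum_{i<j}a_ia_j\big)=\rho\sum_i n_i$, so the explicit powers of $z$ attached to the universal series (namely $z^{-(\rho^2-1)/2}$ on the $\chi$-factor, $z^{-(\rho-1)/2}$ on each $a_i^2$-factor, and $z$ on each $a_ia_j$-factor) are exactly what is needed for the coefficient of $z^{\frac12\vd(M)}$ to return the desired invariant. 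I would then split $\widetilde\Psi(\bsa,\bsn,\boldsymbol t)$ into an $\bsn$-independent part and an $\bsn$-dependent tautological part. The $\bsn$-independent part produces the explicit prefactors of the statement: the character powers $t_i^{\sum_{j\ge i}(\frac12 a_j(a_j-K)+\chi)}$ come from $\chi(1,a_j,\tfrac12a_j^2-n_j)=\tfrac12a_j(a_j-K)+\chi-n_j$, the residual $-n_j$ being absorbed into the $q$-dependence (this is precisely why the universal series may depend on $t_1,\dots,t_{\rho-1}$); the rank part of $P\big(\bigoplus_i\I_i(a_i)\otimes\frakT_i\big)$ gives $\prod_i(1+T_i)^{-\chi(\alpha\otimes\O_S(a_i-\frac{c_1}{\rho}))}$; the leading parts of the exponentials $\mu(L)$ and $\mu(\pt)u$ give $e^{-(a_iL)T_i-\frac12T_i^2u}$; and the $Q$-factor combined with $\prod_{i<j}(T_j-T_i)^{-\chi(a_j)}$ yields the displayed $(T_j-T_i)$- and $(T_i-T_j)$-powers.

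\textbf{Universality.} The remaining $\bsn$-dependent integrand is a genuinely \emph{tautological} expression on $S^{[\bsn]}$, built from the $\I_i(a_i)$, the $\O(a_i)^{[n_i]}$, and the slant products defining $\alpha_M$, $\mu(L)$ and $\mu(\pt)$. Here I would apply the universality theorem of \cite{EGL} in its equivariant, product-of-Hilbert-schemes form: the generating function $\sum_{\bsn}q^{\sum n_i}\int_{S^{[\bsn]}}(\cdots)$ is a finite product of universal power series lying in $1+q\,\Q(\!(t_1,\dots,t_{\rho-1})\!)[[q]]$, each raised to one of the independent intersection numbers $c_1(\alpha)^2,\,c_1(\alpha)L,\,c_1(\alpha)K,\,c_2(\alpha),\,L^2,\,LK,\,u,\,K^2,\,\chi,\,a_ic_1(\alpha),\,a_iL,\,a_iK,\,a_ia_j,\,a_i^2$. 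Defining the series $A_{\underline{\,\cdot\,},s}$ to be these factors — their independence of $S$ is exactly the content of EGL universality — and then taking $\Coeff_{t_1^0}\cdots\Coeff_{t_{\rho-1}^0}$ of the coefficient of $z^{\frac12\vd(M)}$, with $q=z^\rho$, produces the stated formula.

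\textbf{Main obstacle.} The technical heart is this last step: establishing EGL-type universality in the \emph{equivariant} setting on a \emph{product} of Hilbert schemes and cleanly disentangling the explicit prefactors from the universal factors. The underlying EGL mechanism — that $\log$ of the generating function is additive, hence linear, in the Chern numbers (via the behaviour under disjoint unions of surfaces), so the series factors as a product of $(\text{series})^{(\text{Chern number})}$ — extends to this situation, but one must verify carefully that after extracting the prefactors the integrand is truly tautological, that the cross-terms in $Q\big(\I_1(a_1)\otimes\frakT_1,\dots,\I_\rho(a_\rho)\otimes\frakT_\rho\big)$ and in $\prod_{i<j}e(\O(a_j)^{[n_j]}\otimes\frakT_j\frakT_i^{-1})$ contribute only through the bilinear numbers $a_ia_j$ (the series $A_{\underline{a_ia_j},s}$), and that the entanglement of the equivariant parameters $t_i$ with the generating variable $z$ — forced by the absorbed $-n_j$ — is tracked consistently, so that the $A_{\underline{\,\cdot\,},s}$ indeed land in $\Q(\!(t_1,\dots,t_{\rho-1})\!)[[q]]$ as claimed.
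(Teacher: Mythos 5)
Your proposal is correct and takes essentially the same route as the paper's proof, which proceeds in the same three steps: applying Grothendieck--Riemann--Roch to realize $c(\alpha_M)e^{\mu(L)+\mu(\pt)u}$ as a twist-invariant polynomial in slant products and feeding it into Mochizuki's formula, using assumption (v) to decouple the sum over $\bsa$ from $\bsn$ and extracting the perturbative term $\widetilde{\Psi}(\bsa,\boldsymbol{0},\boldsymbol{t})$ as the explicit prefactor, and then proving multiplicativity of the normalized generating series under disjoint unions to invoke EGL universality (in its product-of-Hilbert-schemes enhancement), with the identical $q=z^{\rho}$ dimension bookkeeping. Even your side observations --- that the residual $t_i^{-\sum_{j\ge i}n_j}$ forces the universal series into $\Q(\!(t_1,\dots,t_{\rho-1})\!)[[q]]$, and that (v) makes the admissible $\bsa$ independent of $\bsn$ --- match the paper's treatment exactly.
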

\begin{proof}
The proof is divided into three steps. \\

\noindent \textbf{Step 1.} Applying Grothendieck-Riemann-Roch to the projection $\pi : S \times M \rightarrow M$, we obtain a polynomial expression $P(\E)$ in slant products such that 
$$
c(\alpha_M) \, e^{\mu(L) + \mu(\pt) u} = P(\EE).
$$

\noindent \textbf{Step 2.} By Step 1 and Theorem \ref{mocthm} 
$$
\int_{[M]^{\vir}} c(\alpha_M) \, e^{\mu(L) + \mu(\pt) u}
$$ 
equals $\mathrm{Coeff}_{t_{1}^0} \cdots \mathrm{Coeff}_{t_{r-1}^0}$ of 
$$
(-1)^{\rho-1} \rho \sum_{a_1 + \cdots + a_{\rho}=c_1 \atop a_i H \leq \frac{1}{\rho-i} \sum_{j>i} a_jH \ \forall i}  \sum_{n_1 + \cdots + n_\rho =c_2 - \sum_{i<j} a_ia_j} \prod_{i=1}^{\rho-1} \SW(a_i) \int_{S^{[\boldsymbol{n}]}} \widetilde{\Psi}(\boldsymbol{a}, \boldsymbol{n},\boldsymbol{t}),
$$
where we used assumption (v) in order to simplify  \eqref{Mochineq}. Note that $\widetilde{\Psi}$ also depends on $\alpha,L,u$ (though our choice of $P(\EE)$) but we suppress this dependence. Consider
$$
\sum_{(n_1, \ldots, n_\rho) \in \Z_{\geq 0}^n} q^{n_1+ \cdots +n_\rho} \int_{S^{[\boldsymbol{n}]}} \widetilde{\Psi}(\boldsymbol{a}, \boldsymbol{n}, \boldsymbol{t}).
$$
The perturbative term $\mathsf{Pert} := \widetilde{\Psi}(\boldsymbol{a},\boldsymbol{0}, \boldsymbol{t})$ is defined as its constant term, which arises by taking $n_1 = \cdots = n_\rho=0$. Explicitly, $\mathsf{Pert}$ equals
\begin{align*}
&\prod_{i=1}^{\rho-1} t_i^{\sum_{j \geq i} ( \frac{1}{2} a_j(a_j-K) + \chi  ) } \prod_{1 \leq i<j \leq \rho}  (T_j - T_i)^{\chi(a_j-a_i) - \chi(a_j)} (T_i-T_j)^{\chi(a_i-a_j)} \\
&\cdot c^{T}(-R\Gamma(S, \alpha \otimes \O_S(-c_1 / \rho) \otimes \bigoplus_{i=1}^{\rho} \O_S(a_i) \otimes \frakT_i )) \\
&\cdot \exp\Big(    \int_S (c_1(L) + u [\pt])  \Big\{c^T_2\Big(   \bigoplus_{i=1}^{\rho} \O_S(a_i) \otimes \frakT_i   \Big) - \frac{\rho-1}{2\rho} c^T_1\Big(   \bigoplus_{i=1}^{\rho} \O_S(a_i) \otimes \frakT_i   \Big)^2  \Big\} \Big),
\end{align*}
where $c^T$ denotes $T$-equivariant Chern class and $\int_S$ denotes $T$-equivariant integration. Using $\sum_i T_i = 0$, this can be simplified to
\begin{align*}
\mathsf{Pert} = &\prod_{i=1}^{\rho-1} t_i^{\sum_{j \geq i} ( \frac{1}{2} a_j(a_j-K) + \chi  ) } \prod_{1 \leq i<j \leq \rho}  (T_j - T_i)^{\chi(a_j-a_i) - \chi(a_j)} (T_i-T_j)^{\chi(a_i-a_j)} \\
&\cdot \prod_{i=1}^{\rho} (1+T_i)^{-\chi(\alpha \otimes \O_S(a_i - \frac{c_1}{\rho}))} e^{ - (a_iL) \, T_i -\frac{1}{2} T_i^2 u}.
\end{align*}

\noindent \textbf{Step 3.} Let $S$ be any \emph{possibly disconnected} smooth projective surface. Take any $\alpha \in K(S)$ and arbitrary divisor classes $L,a_1, \ldots, a_\rho$ on $S$. Consider the generating function
\begin{equation} \label{defZ}
\mathsf{Z}_S(\alpha,L,\boldsymbol{a},\boldsymbol{t},u,q) := \frac{1}{\mathsf{Pert}} \sum_{(n_1, \ldots, n_\rho) \in \Z_{\geq 0}^{\rho}} q^{n_1+ \cdots +n_\rho} \int_{S^{[\boldsymbol{n}]}} \widetilde{\Psi}(\boldsymbol{a}, \boldsymbol{n}, \boldsymbol{t}),
\end{equation}
which has constant term equal to 1. For any $(S',\alpha',L',\boldsymbol{a}')$ and $(S'',\alpha'',L'',\boldsymbol{a}'')$ we have
\begin{align}
\begin{split} \label{mult}
\mathsf{Z}_{S' \sqcup S''}(\alpha' \oplus \alpha'',L'\oplus L'',\boldsymbol{a}' \oplus \boldsymbol{a}'' ,\boldsymbol{t},u,q) = \mathsf{Z}_{S'}(\alpha',L',\boldsymbol{a}' ,\boldsymbol{t},u,q) \mathsf{Z}_{S''}(\alpha'',L'',\boldsymbol{a}'' ,\boldsymbol{t},u,q).
\end{split}
\end{align}
Consider the decomposition
\begin{align*}
&S^{[n_1]} \times \cdots \times S^{[n_\rho]} = \bigsqcup_{n_1'+n_1''=n_1} \cdots \bigsqcup_{n_\rho'+n_\rho'' = n_\rho} S^{\prime [n_1']} \times  \cdots \times S^{\prime [n_\rho']} \times S^{\prime \prime [n_1'']} \times \cdots \times S^{\prime\prime [n_\rho'']} \\
&\bigoplus_{i=1}^{\rho} \I_i(a_i) \otimes \frakT_i\Big|_{S \times S^{\prime [\boldsymbol{n}']} \times S^{\prime \prime [\boldsymbol{n}'']}} = \bigoplus_{i=1}^{\rho} j'_*(\I'_i(a'_i) \otimes \frakT_i) \oplus j''_* (\I''_i(a''_i) \otimes \frakT_i),
\end{align*}
where $j' : S' \times S^{\prime [\boldsymbol{n}']} \times S^{\prime \prime [\boldsymbol{n}'']} \hookrightarrow S \times S^{\prime [\boldsymbol{n}']} \times S^{\prime \prime [\boldsymbol{n}'']}$, $j'' : S'' \times S^{\prime [\boldsymbol{n}']} \times S^{\prime \prime [\boldsymbol{n}'']} \hookrightarrow S \times S^{\prime [\boldsymbol{n}']} \times S^{\prime \prime [\boldsymbol{n}'']}$ denote the inclusions, and we suppress various pull-backs. The multiplicative property \eqref{mult} follows from these decompositions combined with
\begin{align*}
c(E + F) &= c(E) c(F), \quad e(E + F) = e(E) e(F) \\
\exp(\pi_*(\beta \cdot \ch_2(E+F))) &= \exp(\pi_*(\beta \cdot \ch_2(E))) \exp(\pi_*(\beta \cdot \ch_2(F))).
\end{align*}

The multiplicative property \eqref{mult} combined with universality of intersection numbers on Hilbert schemes \cite[Thm.~4.1]{EGL} implies the existence of the universal functions $A_{\underline{c_1(\alpha)^2},s}, \ldots, A_{\underline{a_ia_j},s}$ depending \emph{only} on $\rho,s$ and satisfying\footnote{This part of the argument is well-known and has been used in many settings e.g.~\cite{GNY1, KST, GK1, GK2, GK3, Laa1, Laa2}. Also note that we use a slight enhancement of \cite[Thm.~4.1]{EGL} to intersection numbers on \emph{products} of Hilbert schemes, which was first treated in \cite{GNY1}.}
\begin{align}
\begin{split} \label{ZA}
\mathsf{Z}_S(\alpha,L,\boldsymbol{a} ,\boldsymbol{t},u,q) =  &A_{\underline{c_1(\alpha)^2},s}^{c_1(\alpha)^2}  A_{\underline{c_1(\alpha)L},s}^{c_1(\alpha)L}  A_{\underline{c_1(\alpha)K},s}^{c_1(\alpha)K} A_{\underline{c_2(\alpha)},s}^{c_2(\alpha)}  A_{\underline{L^2},s}^{L^2}  A_{\underline{LK},s}^{LK} A_{\underline{\pt},s}^u   A_{\underline{K^2},s}^{K^2} A_{\underline{\chi},s}^{\chi}  \\
&\prod_{i=1}^{\rho} A_{\underline{a_ic_1(\alpha)},s}^{a_ic_1(\alpha)} A_{\underline{a_iL},s}^{a_iL} A_{\underline{a_iK},s}^{a_iK}  \prod_{1 \leq i \leq j \leq \rho}  A_{\underline{a_ia_j},s}^{a_ia_j},
\end{split}
\end{align}
for any $(S,\alpha,L,\boldsymbol{a})$. 
Finally, we note that
\begin{align*}
\vd &= 2 \rho c_2 - (\rho-1) c_1^2 - (\rho^2-1) \chi \\
&= 2\rho \sum_i n_i + 2 \sum_{i<j} a_i a_j - (\rho-1) \sum_i a_i^2 - (\rho^2-1)\chi.
\end{align*}
Referring to equation \eqref{defZ}, this implies that we obtain the desired result by making the substitution $q = z^\rho$
\end{proof}

\begin{theorem} \label{structhm2}
Let $\rho \in \Z_{>1}$ and $r \in \Z$. There exist $B_{\underline{L^2},r}$, $B_{\underline{LK},r}$, $B_{\underline{K^2},r}$, $B_{\underline{\chi},r}$, $B_{\underline{a_iL},r}$, $B_{\underline{a_iK},r}$, $B_{\underline{a_ia_j},r} \in 1+ q\Q(\!(t_1,\ldots, t_{\rho-1})\!)[[q]]$, for all $1 \leq i \leq j \leq \rho$, with the following property.
Let $(S,H)$ be a smooth polarized surface such that $b_1(S) = 0$, $p_g(S) > 0$, and let $L \in \Pic(S)$. Let $c_1 \in H^2(S,\Z)$ algebraic, $c_2 \in H^4(S,\Z)$, and consider $M:=M_S^H(\rho,c_1,c_2)$. 
Assume the hypotheses (i)--(v) of Theorem \ref{structhm1} hold.
Then $\chi^{\vir}(M,\mu(L) \otimes E^{\otimes r})$ equals $\mathrm{Coeff}_{t_{1}^0} \cdots \mathrm{Coeff}_{t_{r-1}^0}$ of the coefficient of $w^{\frac{1}{2} \vd(M)}$ of the following expression
\begin{align*}
&(-1)^{\rho-1} \rho \sum_{(a_1, \ldots, a_{\rho})} \prod_{i=1}^{\rho-1} \SW(a_i)  t_i^{\sum_{j \geq i} ( \frac{1}{2} a_j(a_j-K) + \chi ) } \prod_{i=1}^{\rho} e^{-(\frac{r}{2} a_i^2 + (L - \frac{r}{\rho}c_1) a_i) T_i} \\
&\cdot \prod_{1 \leq i<j \leq \rho}  \frac{(1-e^{-(T_j - T_i)})^{\chi(a_j - a_i)} (1-e^{-(T_i - T_j)})^{\chi(a_i - a_j)}}{(T_j - T_i)^{\chi(a_j)}} \\
&\cdot B_{\underline{L^2},r}^{L^2} B_{\underline{LK},r}^{LK}  B_{\underline{K^2},r}^{K^2}  \cdot (w^{-\frac{\rho^2 - 1}{2}}B_{\underline{\chi},r})^{\chi} \prod_{i=1}^{\rho}  B_{\underline{a_iL},r}^{a_iL} B_{\underline{a_iK},r}^{a_iK} \cdot (w^{-\frac{\rho-1}{2}} B_{\underline{a_ia_i},r})^{a_i^2} \prod_{1 \leq i<j \leq \rho} (w B_{\underline{a_ia_j},r})^{a_ia_j},
\end{align*}
where the sum is over all $(a_1, \ldots, a_\rho) \in H^2(S,\Z)^\rho$ satisfying $a_1 + \cdots +a_\rho=c_1$ and $a_i H \leq \frac{1}{\rho-i} \sum_{j>i} a_jH$ for all $i=1, \ldots, \rho-1$, and all power series are evaluated at $q=w^\rho$.
\end{theorem}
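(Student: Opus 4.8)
The plan is to follow verbatim the three-step architecture of the proof of Theorem~\ref{structhm1}, the only genuinely new input being that the virtual holomorphic Euler characteristic is computed by virtual Hirzebruch--Riemann--Roch and hence carries an extra Todd factor. In \textbf{Step 1} I would write, by virtual Hirzebruch--Riemann--Roch for $\O_M^{\vir}$,
\begin{equation*}
\chi^{\vir}(M, \mu(L) \otimes E^{\otimes r}) = \int_{[M]^{\vir}} \ch(\mu(L) \otimes E^{\otimes r}) \, \td(T_M^{\vir}),
\end{equation*}
and then apply Grothendieck--Riemann--Roch to $\pi_M : S \times M \to M$ to realise the integrand as a polynomial expression $P(\E)$ in slant products. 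Here $\mu(L) \otimes E^{\otimes r} = \lambda(v)$ for the class $v \in K_c$ fixed before \eqref{muLEr}, so $\ch(\lambda(v)) = \exp(c_1(\lambda(v)))$ with $c_1(\lambda(v))$ the degree-two part of $-\pi_{M*}(\pi_S^* \ch(v) \cdot \ch(\E) \cdot \pi_S^* \td(S))$, while $T_M^{\vir} = R\hom_{\pi_M}(\E,\E)_0[1]$. Both factors are invariant under $\E \rightsquigarrow \E \otimes \mathcal{L}$ for $\mathcal{L} \in \Pic(S \times M)$ --- for the line bundle precisely because $v \in K_c$, i.e.~$\chi(S, c \otimes v) = 0$ --- so $P(\E) = P(\E \otimes \mathcal{L})$ and Theorem~\ref{mocthm} applies.

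In \textbf{Step 2} I would substitute this $P(\E)$ into Mochizuki's formula, using hypothesis (v) to make the inequalities of \eqref{Mochineq} strict exactly as in the Segre case, and compute the perturbative term $\mathsf{Pert} = \widetilde\Psi(\bsa, \boldsymbol{0}, \boldsymbol{t})$ by replacing $\E$ with $\bigoplus_i \O_S(a_i) \otimes \frakT_i$. Two features distinguish this from the Segre computation. The Chern character of $\lambda(v)$ produces $\prod_i \exp(-(\tfrac{r}{2} a_i^2 + (L - \tfrac{r}{\rho} c_1) a_i) T_i)$: the coefficient of $T_i$ isolates, via $\ch(v) = r + (L - \tfrac{r}{\rho} c_1) + \cdots$, exactly the $a_i$-dependent part $\tfrac{r}{2} a_i^2 + (L - \tfrac{r}{\rho} c_1) a_i$, all $a_i$-independent corrections dropping out after using $\sum_i T_i = 0$. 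Secondly, the off-diagonal part of $\td(T_M^{\vir})$ combines with the Euler-class factors arising from the denominator $Q$ of $\widetilde\Psi$: writing $E_{ij} := R\hom_\pi(\O(a_i)\frakT_i, \O(a_j)\frakT_j) + R\hom_\pi(\O(a_j)\frakT_j, \O(a_i)\frakT_i)$, the contributions $e(E_{ij})$ from $1/Q$ and $\td(E_{ij})^{-1}$ from the trace-free $T_M^{\vir}$ multiply to $\prod_k (1 - e^{-x_k})$ over the Chern roots $x_k$ of $E_{ij}$, so each factor $(T_j - T_i)$ of the Segre answer is replaced by $(1 - e^{-(T_j - T_i)})$. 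This yields the middle product
\begin{equation*}
\prod_{i<j} \frac{(1 - e^{-(T_j - T_i)})^{\chi(a_j - a_i)} (1 - e^{-(T_i - T_j)})^{\chi(a_i - a_j)}}{(T_j - T_i)^{\chi(a_j)}}
\end{equation*}
of the statement, the denominator surviving unchanged from the character-weight prefactor of $\widetilde\Psi$, while the diagonal ($i = j$) part of $\td(T_M^{\vir})$ carries trivial $T$-weight and is absorbed into the universal functions.

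In \textbf{Step 3} the universality argument is word-for-word that of Step~3 of Theorem~\ref{structhm1}: normalising the series $\sum_{\bsn} q^{n_1 + \cdots + n_\rho} \int_{S^{[\bsn]}} \widetilde\Psi$ by $\mathsf{Pert}$ yields a series with constant term $1$ that is multiplicative under disjoint unions of surfaces, so \cite[Thm.~4.1]{EGL} in its product form \cite{GNY1} produces universal power series $B_{\underline{L^2},r}, \dots, B_{\underline{a_i a_j},r}$ depending only on $\rho$ and $r$. Reading off $\vd = 2\rho \sum_i n_i + 2 \sum_{i<j} a_i a_j - (\rho-1) \sum_i a_i^2 - (\rho^2 - 1)\chi$ and substituting $q = w^\rho$ then distributes the powers of $w$ onto the exponents exactly as displayed.

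The step I expect to be the main obstacle is the perturbative computation in Step~2: verifying cleanly that virtual Hirzebruch--Riemann--Roch converts each Euler-class root factor $x_k$ into $1 - e^{-x_k}$ (so that the off-diagonal product takes the stated form rather than its Segre analog), while simultaneously keeping the $\det(c)^{-r/\rho}$-normalisation and the $K_c$-condition consistent so that the fractional determinant powers reorganise into the clean combination $L - \tfrac{r}{\rho} c_1$ and the whole expression is manifestly independent of the choice of universal sheaf.
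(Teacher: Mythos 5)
Your proposal is correct and follows the paper's proof essentially verbatim: the same three-step scheme (virtual Hirzebruch--Riemann--Roch plus Grothendieck--Riemann--Roch to produce a twist-invariant slant-product polynomial $P(\E)$ so that Mochizuki's formula applies, the perturbative term computed on $\bigoplus_i \O_S(a_i)\otimes\frakT_i$ with the product $e\cdot\td^{-1}$ converting each Euler root $x$ into $1-e^{-x}$ and $\sum_i T_i=0$ yielding the exponents $\tfrac{r}{2}a_i^2+(L-\tfrac{r}{\rho}c_1)a_i$, and the EGL-multiplicativity universality argument with the substitution $q=w^\rho$). The only point you treat more lightly than the paper is the conversion of the products $\ch_i(\E)\ch_j(\E)$ appearing in $\td(T_M^{\vir})$ into genuine slant products via the K\"unneth decomposition of the diagonal in $S\times S$, which is exactly how the paper completes Step 1.
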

\begin{proof}
The proof is similar to that of Theorem \ref{structhm1}. We indicate the differences. \\

\noindent \textbf{Step 1:} By the virtual Hirzebruch-Riemann-Roch theorem \cite{FG}, we have
$$
\chi^{\vir}(M,\mu(L) \otimes E^{\otimes r}) = \int_{[M]^{\vir}} e^{c_1(\lambda_{\E}(v))} \, \td(T_{M}^{\vir}),
$$
where $v$ and $\lambda_{\E}(v) = \det(\pi_{M!} ( \pi_S^* v \cdot [\E]))^{-1}$ were defined in the introduction. The virtual tangent bundle is given by
$$
T^{\vir}_{M} = R \hom_\pi(\E,\E)_0[1],
$$ 
where $\pi : S \times M \rightarrow M$ denotes projection. Applying Grothendieck-Riemann-Roch gives
$$
\ch(T_{M}^{\vir}) = \pi_{M*} \big( ( 1 - \ch(\E)^{\vee} \cdot  \ch(\E)) \cdot \pi_{S}^* \td(T_S) \big).
$$
Hence $\td(T_{M}^{\vir})$ can be written as a polynomial in expressions of the form
\begin{equation} \label{prodch}
\pi_{M*} \big( \ch_{i}(\EE) \ch_{j}(\EE) \cdot \pi_S^* \beta \big),
\end{equation}
for certain classes $\beta$.  Expression \eqref{prodch} can be written as a polynomial expression in slant products as follows. Consider $S \times S \times M$ and denote by $\pi_{i}$ projection onto the $i$th component and by $\pi_{ij}$ projection onto components $i$ and $j$. Then \eqref{prodch} equals
$$
\pi_{3*} \big( \pi_1^* \beta \cdot \pi_{13}^* \ch_{i}(\E) \cdot \pi_{23}^* \ch_{j}(\E) \cdot \pi_{12}^* \mathrm{PD}(\Delta) \big),
$$
where $\Delta$ is the diagonal inside $S \times S$. Consider the K\"unneth decomposition
$$
\mathrm{PD}(\Delta) = \sum_{a+b = 4} \theta_1^{(a)} \boxtimes \theta_2^{(b)},
$$
where $\theta_i^{(a)} \in H^{a}(S,\Q)$. Then we can write \eqref{prodch} as 
$$
\sum_{a+b= 4} \big( \ch_{i}(\EE) / (\beta \cdot \theta_1^{(a)})  \big)  \cdot \big( \ch_{j}(\EE) / \theta_2^{(b)}  \big).
$$

Since $c_1(\det E) = c_1(E)$, for any complex of sheaves $E$, we find 
$$
c_1(\lambda(v)) = - c_1( \pi_{M!} ( \pi_S^* v \cdot [\E]) ),
$$
which, using Grothendieck-Riemann-Roch, can also be expressed in slant products. \\

\noindent \textbf{Step 2:} The perturbative term $\mathsf{Pert} = \widetilde{\Psi}(\boldsymbol{a}, \boldsymbol{0},\boldsymbol{t})$ is given by
\begin{align*}
&\prod_{i=1}^{\rho-1} t_i^{\sum_{j \geq i} ( \frac{1}{2} a_j(a_j-K) + \chi  ) } \prod_{1 \leq i<j \leq \rho}  (T_j - T_i)^{\chi(a_j-a_i) - \chi(a_j)} (T_i-T_j)^{\chi(a_i-a_j)} \\
&\cdot e^{- c_1^T(\det \pi_{!}  (v \cdot \sum_{i=1}^{\rho} \O_S(a_i) \otimes \frakT_i  ))} \prod_{1 \leq i<j \leq \rho} \td^T(-R\Gamma(S,\O_S(a_j-a_i)) \otimes \frakT_j \frakT_i^{-1} -R\Gamma(S,\O_S(a_i-a_j)) \otimes \frakT_i \frakT_j^{-1}),
\end{align*}
where $c_1^T$ denotes $T$-equivariant first Chern class and $\pi : K^T(S) \rightarrow K^T(\pt)$ is $T$-equivariant push-forward to a point. Using $\sum_i T_i = 0$, this simplifies to
\begin{align*}
&\prod_{i=1}^{\rho-1} t_i^{\sum_{j \geq i} ( \frac{1}{2} a_j(a_j-K) + \chi  ) }  \prod_{1 \leq i<j \leq \rho}  (T_j - T_i)^{\chi(a_j-a_i) - \chi(a_j)} (T_i-T_j)^{\chi(a_i-a_j)} \\ 
&\cdot \prod_{i=1}^{\rho} e^{- (\frac{r}{2} a_i^2 + (L - \frac{r}{\rho}c_1) a_i) T_i} \prod_{1 \leq i<j \leq \rho} \Bigg( \frac{T_j - T_i}{1-e^{-(T_j - T_i)}} \Bigg)^{-\chi(a_j - a_i)} \Bigg( \frac{T_i - T_j}{1-e^{-(T_i - T_j)}} \Bigg)^{-\chi(a_i - a_j)}.
\end{align*}

\noindent \textbf{Step 3:} Analogous to Step 3 of Theorem \ref{structhm1}. This time, the multiplicative property \eqref{mult} requires the following identity
\begin{align*}
\td(E + F) &= \td(E) \td(F). \qedhere
\end{align*}
\end{proof}

\begin{remark} \label{univdrop2}
We now show that Condition (ii), i.e.~the existence of a global universal sheaf $\E$ on $S \times M$, can be dropped from Theorem \ref{structhm2}. Without the existence of a global universal sheaf, there still exists a homomorphism as in \cite[Sect.~8.1]{HL}, \cite[Sect.~1.1]{GNY2}
\begin{equation*} 
\lambda : K_c \longrightarrow \Pic(M) 
\end{equation*}
with the following property. Take a universal sheaf $\E$ on $S \times M'$, where $\phi : M' \rightarrow M$ is an \'etale cover (such a ``twisted'' universal sheaf always exists). Then
$$
\phi^* \lambda(v) = \lambda_{\E}(v)
$$ 
for all $v \in K_c$, where $\lambda_{\E}$ is defined as in \eqref{lambdaE} with $M$ replaced by $M'$. Therefore, we can define $\mu(L) \otimes E^{\otimes r}$ as in \eqref{muLEr} with $\lambda_{\E}$ replaced by $\lambda$. Hence the virtual Verlinde numbers are defined without assuming the existence of a global universal sheaf. The original version of Mochizuki's formula \cite[Thm.~7.5.2]{Moc} holds on the Deligne-Mumford stack $\mathcal{M}$ of oriented sheaves, i.e.~pairs $(F,\chi)$ where $[F] \in M$ and $\chi : F \cong \O(c_1)$. Consider the degree $\frac{1}{\rho} : 1$ \'etale morphism $f : \mathcal{M} \rightarrow M$. Then $f_* [\mathcal{M}]^{\vir} = \tfrac{1}{\rho} [M]^{\vir}$. There always exists a universal sheaf $\mathcal{E}$ on $S \times \mathcal{M}$ and we have
\begin{align*}
f^* T^{\vir}_{M} \cong T^{\vir}_{\mathcal{M}}, \quad f^* \lambda(v) = \lambda_{\mathcal{E}}(v),
\end{align*}
for all $v \in K_c$. Hence
$$
\chi^{\vir}(M,\mu(L) \otimes E^{\otimes r}) = \rho \cdot \int_{[\mathcal{M}]^{\vir}} e^{c_1(\lambda_{\mathcal{E}}(v))} \, \td(T_{\mathcal{M}}^{\vir})
$$
and one can simply repeat the proof of Theorem \ref{structhm2} using Mochizuki's formula on $\mathcal{M}$. This shows Condition (ii) can be dropped from Theorem \ref{structhm2}.
\end{remark}

\begin{remark} \label{univdrop1}
We now show that Condition (ii), i.e.~the existence of a global universal sheaf $\E$ on $S \times M$ can be dropped from Theorem \ref{structhm1} as well. Recall that $\ch(\alpha_M)$ was defined by a formal application of the Grothendieck-Riemann-Roch formula in the introduction
\begin{equation} \label{powerroot}
\ch(\alpha_M) :=   - \pi_{M*} \Big( \ch(\E) \cdot  \ch(\det(\E))^{-\frac{1}{\rho}} \cdot \pi_S^* \ch(\alpha) \cdot \pi_S^* \td(S) \Big) \in A^*(S)_{\Q}. 
\end{equation}
When $\E$ does not exist globally on $S \times M$, the sheaf $\E^{\otimes \rho} \otimes \det(\E)^{-1}$ still exists globally on $S \times M$ (essentially because this expression is invariant under replacing $\E$ by $\E \otimes \mathcal{L}$ so it glues from local \'etale patches). Hence $\ch(\E^{\otimes \rho} \otimes \det(\E)^{-1})^{1/\rho} \in A^*(S)_{\Q}$ is defined and we simply replace $\ch(\E) \cdot  \ch(\det(\E))^{-\frac{1}{\rho}}$ by this expression in \eqref{powerroot}. Obviously, when $\E$ exists globally on $S \times M$, we have

$$
\ch(\E^{\otimes \rho} \otimes \det(\E)^{-1})^{\frac{1}{\rho}} = \ch(\E) \cdot \ch(\det(\E))^{-\frac{1}{\rho}}
$$
and we recover the previous definition. Hence the virtual Segre numbers are defined without assuming the existence of a global universal sheaf.  Using the morphism $f : \mathcal{M} \to M$ from the previous remark and noting $f^*(\E^{\otimes \rho} \otimes \det(\E)^{-1}) \cong \mathcal{E}^{\otimes \rho} \otimes \det(\mathcal{E})^{-1}$, the virtual Segre numbers can be lifted to $\mathcal{M}$ and one can repeat the proof of Theorem \ref{structhm1} using Mochizuki's formula on $\mathcal{M}$. This shows Condition (ii) can be dropped from Theorem \ref{structhm1}.
\end{remark}

\begin{remark}
Conjecturally,  Conditions (iii) and (v) can also be dropped from Theorems \ref{structhm1}, \ref{structhm2} and the sum in the formula can be replaced by the sum over \emph{all} classes $(a_1, \ldots, a_\rho) \in H^2(S,\Z)^\rho$ satisfying $a_1 + \cdots +a_\rho=c_1$. See also \cite{GNY1, GK1, GK2, GK3, GKW}. In some of the calculations described in Sections \ref{sec:verif:higherrk} and \ref{sec:alg}, we use this strong form of Mochizuki's formula.
\end{remark}

\begin{remark}
Condition (iv) is essential and can be equivalently stated as 
$$
c_2 < \tfrac{1}{2}c_1(c_1-K) + 2\chi.
$$ 
At first glance, this appears a very strong restriction. However, applying $- \otimes \O_S(\ell H)$ induces an isomorphism of moduli spaces of Gieseker $H$-stable sheaves
$$
M_{S}^{H}(\rho,c_1,c_2)\cong M_S^H(\rho,c_1+ \rho \ell H,c_2+(\rho-1)\ell Hc_1+\tfrac{1}{2}\rho(\rho-1)\ell^2 H^2)
$$ 
leaving corresponding virtual Segre/Verlinde numbers unchanged. Under this isomorphism, Condition (iv) becomes 
$$
c_2 < \tfrac{1}{2}c_1(c_1-K) + 2\chi + (Hc_1 - \tfrac{1}{2} \rho HK) \ell + \tfrac{1}{2} \rho H^2 \ell^2,
$$ 
so the upper bound on $c_2$ can be made \emph{arbitrarily large} by taking $\ell \gg 0$. Therefore, Condition (iv) cannot be dropped, but it can always be made to be satisfied.
\end{remark}

Next, we prove Theorem \ref{thminv} on topological invariance of canonical virtual Segre and Verlinde numbers from the introduction.
\begin{proof}[Proof of Theorem \ref{thminv}]
We give the proof for canonical virtual Segre numbers. The case of canonical virtual Verlinde numbers is similar. The case $\rho=1$ follows at once from Theorem \ref{MOPthm1} and the fact that $\chi$ and $K^2$ can be expressed in terms of $e(S)$ and $\sigma(S)$, so we take $\rho \in \Z_{>1}$. Let $S$ be a smooth projective surface satisfying $b_1(S) = 0$ and $K$ very ample. Then $S$ is minimal of general type, so its only Seiberg-Witten basic classes are $0$ and $K \neq 0$, and $\SW(0)=1$, $\SW(K) = (-1)^{\chi}$ \cite[Thm.~7.4.1]{Mor}. Suppose furthermore $H = K$. 
For any $\boldsymbol{\lambda} = (\lambda_1, \ldots, \lambda_m) \in \Z^m, \boldsymbol{\mu} = (\mu_1, \ldots, \mu_m) \in \Z^m$, we write
$$
\boldsymbol{\lambda} \O(K)^{ \otimes \boldsymbol{\mu}} := \sum_{i=1}^{m} \lambda_i [\O(K)^{\otimes \mu_i}] \in K(S).
$$
For any $\boldsymbol{\lambda}, \boldsymbol{\mu} \in \Z^m, n \in \Z$, consider the following generating function of canonical virtual Segre numbers of $S$
$$
\mathsf{G}_{\rho,\boldsymbol{\lambda},\boldsymbol{\mu},n,S} := \sum_{\ell,c_2 \in \Z \atop \gcd(\rho,\ell K^2) = 1} w^\ell z^{\frac{\vd(M)}{2}} \int_{[M]^{\vir}} c((\boldsymbol{\lambda} \O(K)^{ \otimes \boldsymbol{\mu}})_M) \, e^{\mu(\O(K)^{\otimes n}) + \mu(\pt) u},
$$
where $M:=M_S^{K}(\rho,\ell K,c_2)$ and $\vd(M) = 2\rho c_2 - (\rho-1) \ell^2 K^2 - (\rho^2 - 1) \chi$. The condition $\gcd(\rho,\ell K^2) = 1$ implies that Gieseker and $\mu$-stability coincide and there are no strictly semistable sheaves in $M$ for any $c_2$ \cite[Lem.~1.2.13, 1.2.14]{HL}. Moreover, tensoring by multiples of $K$ is an isomorphism on these moduli spaces and does not change the integrands, so it suffices to only sum over 
$$
\ell \in \{N, N+1, \ldots, N+\rho-1\} \quad \mathrm{such \, that} \quad \gcd(\rho,\ell K^2)=1,
$$
for \emph{any} fixed $N \in \Z$, and consider the powers of $w$ as elements of $\Z / \rho \Z$. 

Consider the conditions of Theorem \ref{structhm1}. Condition (i), (iii), and (v) are automatically satisfied (taking $N \gg 0$) and (ii) can be dropped by Remark \ref{univdrop1}. Taking $N \rightarrow \infty$, the upper bound on $c_2$ in Condition (iv) becomes arbitrarily large. We conclude that $\mathsf{G}_{\rho,\boldsymbol{\lambda},\boldsymbol{\mu},n,S}$, modulo an arbitrarily large power of $z$, is determined by a universal function which only depends on $\rho$, $\boldsymbol{\lambda}$, $\boldsymbol{\mu}$, $n$, $\chi$, $K^2$ (recall that the only Seiberg-Witten basic classes of $S$ are $0$ and $K \neq 0$). Since $\chi$, $K^2$ can be expressed in terms of $e(S)$, $\sigma(S)$, the result follows.
\end{proof}

\subsection{Main conjectures} \label{sec:generalconj}

In Section \ref{sec:mainthms}, we mostly worked with arbitrary smooth projective surfaces $S$ satifying $p_g(S)>0$ and $b_1(S)=0$. Accordingly, we formulate more general versions of Conjectures \ref{conj1} and \ref{conj2}, for which we present verifications in Section \ref{sec:verif} by applying Theorems \ref{structhm1} and \ref{structhm2}. 
\begin{conjecture} \label{conj1strong}
Let $\rho  \in \Z_{>0}$ and $s \in \Z$. There exist $V_s$, $W_s$, $X_s$, $Q_s$, $R_s$, $T_s  \in \C[[z]]$, $Y_s$, $Z_s$, $Y_{j,s}$, $Z_{jk,s}$, $S_s$, $S_{j,s} \in \C[[z^{\frac{1}{2}}]]$ for all $1 \leq j \leq k \leq \rho-1$ with the following property.\footnote{These universal power series depend on $\rho$ and $s$. We suppress the dependence on $\rho$.} Let $(S,H)$ be a smooth polarized surface satisfying $b_1(S) = 0$ and  $p_g(S)>0$. Suppose $M:=M_S^H(\rho,c_1,c_2)$ contains no strictly semistable sheaves. For any $\alpha \in K(S)$ with $\rk(\alpha)=s$ and $L \in \Pic(S)$, $\int_{[M]^{\vir}} c( \alpha_M ) \, \exp(\mu(L)  + \mu(\pt) u)$ equals the coefficient of $z^{\frac{1}{2} \vd(M)}$ of\footnote{We stress that here (and similarly elsewhere), the coefficients $j$ and $k$ of $Z_{jk,s}$ are placed adjacent (and not multiplied). They are not separated by another comma in order to avoid cluttered notation.}
\begin{align*}
&\rho^{2 - \chi+K^2} \, V_s^{c_2(\alpha)} \, W_s^{c_1(\alpha)^2} \, X_s^{\chi} \, Y_{s}^{c_1(\alpha) K} \, Z_{s}^{K^2} \, e^{L^2 Q_s + (c_1(\alpha)L) R_s + (LK) S_{s}  + u \, T_s} \\
&\cdot \sum_{(a_1, \ldots, a_{\rho-1})} \prod_{j=1}^{\rho-1} \eps_{\rho}^{j a_j c_1} \, \SW(a_j) \, Y_{j,s}^{c_1(\alpha) a_j} \, e^{(a_j L) S_{j,s}} \prod_{1 \leq j \leq k \leq \rho-1} Z_{jk,s}^{a_j a_k},
\end{align*}
where the sum is over all $(a_1, \ldots, a_{\rho-1}) \in H^2(S,\Z)^{\rho-1}$. 
Moreover
\begin{align*}
V_s(z) &= (1+(1-\tfrac{s}{\rho})t)^{1-s} (1+(2-\tfrac{s}{\rho})t)^s (1+(1-\tfrac{s}{\rho})t)^{\rho-1} , \\ 
W_s(z) &= (1+(1-\tfrac{s}{\rho})t)^{\frac{1}{2}s-1} (1+(2-\tfrac{s}{\rho})t)^{\frac{1}{2}(1-s)} (1+(1-\tfrac{s}{\rho})t)^{\frac{1}{2} - \frac{1}{2} \rho}, \\
X_s(z) &=  (1+(1-\tfrac{s}{\rho})t)^{\frac{1}{2} s^2-s} (1+(2-\tfrac{s}{\rho})t)^{-\frac{1}{2}s^2+\frac{1}{2}} (1+(1-\tfrac{s}{\rho})(2-\tfrac{s}{\rho})t)^{-\frac{1}{2}} (1+(1-\tfrac{s}{\rho})t)^{-\frac{(\rho-1)^2}{2\rho} s}, \\
Q_s(z) &= \tfrac{1}{2}t(1+(1-\tfrac{s}{\rho}) t), \quad R_s(z) = t, \quad T_s(z) = \rho t (1+ \tfrac{1}{2}(1-\tfrac{s}{\rho})(2-\tfrac{s}{\rho})t),
\end{align*}
where $$z = t (1+(1-\tfrac{s}{\rho}) t)^{1-\frac{s}{\rho}}.$$ Furthermore, $Y_s$, $Y_{j,s}$,  $Z_s$, $Z_{jk,s}$, $S_s$, $S_{j,s}$ are all algebraic functions.
\end{conjecture}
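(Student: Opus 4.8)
The plan is to deduce the product shape of the formula from Theorem~\ref{structhm1}, then to pin down the closed forms of the $a_i$-independent series by reduction to cases where the answer is already known, and finally to treat algebraicity as a separate and deeper issue.

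I would take the formula of Theorem~\ref{structhm1} as the starting point, where the virtual Segre number is $\mathrm{Coeff}_{t_1^0}\cdots\mathrm{Coeff}_{t_{\rho-1}^0}$ of the $z^{\tfrac{1}{2}\vd(M)}$-coefficient of a sum over $\rho$-tuples $(a_1,\dots,a_\rho)$ with $\sum_i a_i=c_1$, weighted by $\prod_{i<\rho}\SW(a_i)$. Since the last class carries no Seiberg-Witten weight, I would substitute $a_\rho=c_1-\sum_{i<\rho}a_i$ everywhere and expand every bilinear form ($a_\rho^2$, $a_\rho c_1(\alpha)$, $a_\rho L$, $a_\rho K$, $a_ia_\rho$, and the analogous terms concealed in the exponents $\chi(\alpha\otimes\O_S(a_i-\tfrac{c_1}{\rho}))$ and $\chi(a_j-a_i)$). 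Because $\mathrm{Coeff}_{t^0}$ is linear in the sum over basic classes, I can treat each tuple $(a_1,\dots,a_{\rho-1})$ separately: for fixed basic classes the integrand is a single product of the equivariant factors in the $T_i$ and of the universal Hilbert-scheme series $A_{\bullet,s}$ of Theorem~\ref{structhm1}. Collecting the monomials in $c_1(\alpha)^2,c_1(\alpha)L,c_1(\alpha)K,c_2(\alpha),L^2,LK,K^2,\chi,u$ that survive free of the basic classes produces the global prefactor, from which $V_s,W_s,X_s,Q_s,R_s,T_s,Y_s,Z_s,S_s$ and the power $\rho^{2-\chi+K^2}$ are read off, while the monomials carrying a factor $a_j$ or $a_ja_k$ with $1\le j,k\le\rho-1$ assemble into $Y_{j,s},Z_{jk,s},S_{j,s}$; the roots of unity $\eps_\rho^{j a_j c_1}$ emerge from the factors $(1+T_i)^{-\chi(\cdots)}$ once the coefficient of $t_i^0$ has been taken.

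A point that needs care here is that $\mathrm{Coeff}_{t^0}$ does not distribute over products, so the clean separation into a prefactor with $V_s,W_s,X_s$ \emph{independent of the basic classes} is stronger than what Theorem~\ref{structhm1} supplies directly and must be checked. To establish it I would try to organize the multivariate residue so that the $\alpha$- and $S$-intrinsic $T_i$-dependence decouples from the basic-class data, verifying the decoupling order by order and, ideally, promoting it to a structural generating-function identity. To determine the explicit forms I would use three anchors: the rank-one specialization, where the formula must reduce to Theorem~\ref{MOPthm1}; the $K3$ specialization, where \eqref{fg} fixes the Verlinde series $g_{r/\rho},f_{r/\rho}$, which by the virtual Segre-Verlinde correspondence are explicit monomials in $V_s,W_s,X_s$; and the direct evaluation of $\mathsf{Z}_S$ by torus localization on products of Hilbert schemes to finite order, as in Section~\ref{sec:verif}. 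The change of variables $z=t(1+(1-\tfrac{s}{\rho})t)^{1-\frac{s}{\rho}}$ is chosen precisely so that these outputs linearize into the stated closed forms.

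The main obstacle is twofold, and it is exactly where the statement remains conjectural. Producing the closed forms of $V_s,W_s,X_s$ (and of $Q_s,R_s,T_s$) for \emph{all} $\rho$ and $s$ requires evaluating the universal Hilbert-scheme integrals $A_{\bullet,s}$ in closed form, for which I know of no method beyond finite-order computation and pattern-matching; the anchors above constrain but do not by themselves determine them. Harder still is proving that $Y_s,Z_s,Y_{j,s},Z_{jk,s},S_s,S_{j,s}$ are algebraic functions, since algebraicity is invisible in the term-by-term expansion and would presumably demand an independent input --- an extension of the Marian-Oprea-Pandharipande rigidity techniques to the virtual higher-rank setting, or a wall-crossing/degeneration mechanism --- rather than brute-force series manipulation. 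I therefore expect the product shape to be the accessible part, with the explicit formulae and the algebraicity claims remaining the crux.
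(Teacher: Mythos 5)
This statement is a conjecture, and the paper offers no proof of it: its entire support consists of exactly the ingredients you list --- the universal structure of Theorem \ref{structhm1} (which the authors say only ``partially inspired'' the conjecture, via the universal formula \emph{before} extracting $\mathrm{Coeff}_{t_1^0}\cdots\mathrm{Coeff}_{t_{\rho-1}^0}$), the rank-one anchors (Theorem \ref{MOPthm1} for $V_s,W_s,X_s$ at $L=u=0$, and Theorem \ref{rk1prop}, proved via the Hilbert--Chow morphism and Nakajima operators, for $Q_s,R_s,S_s,T_s$ at $s=0,1$), the $K3$ specialization, and finite-order toric-localization verifications for $\rho\le 4$ (Sections \ref{sec:verif} and \ref{sec:alg}). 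Your assessment of what is open also matches the paper's: you rightly flag that $\mathrm{Coeff}_{t^0}$ does not distribute over products, so the decoupling of the Seiberg-Witten-class dependence from $V_s,W_s,X_s$ is not a consequence of Theorem \ref{structhm1}. Be aware, though, that in the paper even this structural ``product shape'' remains conjectural --- the order-by-order decoupling you hope to ``promote to a structural identity'' is precisely what the authors do not achieve --- so your expectation that the product shape is ``the accessible part'' is more optimistic than anything established in the paper.

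One concrete omission in your reduction: Theorem \ref{structhm1} sums only over tuples with $a_1+\cdots+a_\rho=c_1$ \emph{and} the stability inequalities $a_iH\le\frac{1}{\rho-i}\sum_{j>i}a_jH$, and it holds only under hypotheses (i)--(v), which in particular bound $c_2$ via Condition (iv), whereas Conjecture \ref{conj1strong} sums freely over all $(a_1,\ldots,a_{\rho-1})\in H^2(S,\Z)^{\rho-1}$ with no hypotheses beyond the absence of strictly semistables. Substituting $a_\rho=c_1-\sum_{j<\rho}a_j$ does not bridge this gap: one needs the conjectural ``strong form'' of Mochizuki's formula (the remark following Theorem \ref{structhm2}) in which Conditions (iii) and (v) are dropped and the constrained sum is replaced by the unconstrained one. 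Condition (iv) can be arranged by twisting by $\O_S(\ell H)$, as the paper explains, but (iii) and (v) are genuinely conjectural, so your plan should state explicitly that even the passage from Theorem \ref{structhm1} to the \emph{shape} of the conjectured formula rests on an unproven strengthening of Mochizuki's input, not merely on the bookkeeping of collecting monomials.
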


\begin{conjecture} \label{conj2strong}
Let $\rho \in \Z_{>0}$ and $r \in \Z$. There exist $G_r$, $F_{r} \in \C[[w]]$, $A_{r}$, $B_{r}$, $A_{j,r}$, $B_{jk,r} \in \C[[w^{\frac{1}{2}}]]$ for all $1 \leq j \leq k \leq \rho-1$ with the following property. Let $(S,H)$ be a smooth polarized surface satisfying $b_1(S) = 0$, $p_g(S)>0$, and let $L \in \Pic(S)$. Suppose $M:=M_S^H(\rho,c_1,c_2)$ contains no strictly semistable sheaves. Then $\chi^{\vir}(M, \mu(L) \otimes E^{\otimes r})$ equals the coefficient of $w^{\frac{1}{2}\vd(M)}$ of
\begin{align} \label{Verlindenum}
\rho^{2 - \chi+K^2} \, G_{r}^{\chi(L)} \, F_{r}^{\frac{1}{2} \chi} \, A_{r}^{LK} \, B_{r}^{K^2} \sum_{(a_1, \ldots, a_{\rho-1})} \prod_{j=1}^{\rho-1} \eps_{\rho}^{j a_j c_1} \, \SW(a_j) \, A_{j,r}^{a_j L} \prod_{1 \leq j \leq k \leq \rho-1} B_{jk,r}^{a_j a_k},
\end{align}
where the sum is over all $(a_1, \ldots, a_{\rho-1}) \in H^2(S,\Z)^{\rho-1}$. Furthermore, $A_r$, $B_r$, $A_{j,r}$, $B_{jk,r}$ are all algebraic functions.
\end{conjecture}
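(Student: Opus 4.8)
Since this is a conjecture, a complete proof is not to be expected; the realistic goal is to reduce it to the universal structure of Theorem~\ref{structhm2}, isolate the genuinely conjectural content, and verify the latter in examples. The backbone of the argument is Theorem~\ref{structhm2}, which already expresses $\chi^{\vir}(M,\mu(L)\otimes E^{\otimes r})$ as $\mathrm{Coeff}_{t_1^0}\cdots\mathrm{Coeff}_{t_{\rho-1}^0}$ of the coefficient of $w^{\frac{1}{2}\vd(M)}$ of an explicit sum over tuples $(a_1,\dots,a_\rho)$ with $\sum_i a_i=c_1$, weighted by $\prod_{i<\rho}\SW(a_i)$ and by the universal series $B_{\underline{\bullet},r}(t,q)$.

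First I would rewrite this expression in the coordinates of \eqref{Verlindenum}. Eliminating $a_\rho:=c_1-\sum_{j<\rho}a_j$ turns the sum into one over $(a_1,\dots,a_{\rho-1})\in H^2(S,\Z)^{\rho-1}$, and the $\SW(a_i)$ factors restrict it to finitely many Seiberg-Witten basic tuples. Each contributing $a_j$ then satisfies the basic-class identity $a_j^2=a_jK$, which I would substitute to collapse the separate $B_{\underline{a_ia_i},r}$ and $B_{\underline{a_iK},r}$ factors and to remove $a_j^2$ as an independent quantity. The exponents then split into a global group depending only on $L^2,LK,K^2,\chi$ (reassembled into $G_r,F_r,A_r,B_r$ via Riemann-Roch $\chi(L)=\chi+\tfrac{1}{2}L(L-K)$) and a local group attached to the individual $a_j$ (reassembled into $A_{j,r}$ from the $a_jL$ terms and $B_{jk,r}$ from the $a_ja_k$ terms). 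The $\rho$-th roots of unity carried by the fractional characters $\frakT_i$ of \eqref{defT} are what produce, after the $\mathrm{Coeff}_{t_i^0}$ extraction, both the phases $\eps_\rho^{j a_j c_1}$ and the prefactor $\rho^{2-\chi+K^2}$.

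The closed forms for $G_r,F_r$ are not derived from Mochizuki's formula but from the $K3$ specialization: there $M$ is smooth of expected dimension, Fujiki's theorem reduces the Verlinde numbers to those of $S^{[\frac{1}{2}\vd]}$, and Theorem~\ref{EGLthm} then yields \eqref{fg}. For the remaining series, I would compute the relevant intersection numbers on products of Hilbert schemes to a prescribed order in $w^{\frac{1}{2}}$ and match them against candidate algebraic functions, as recorded in Section~\ref{sec:verif}.

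The main obstacle is twofold and is exactly what keeps the statement a conjecture. The $\mathrm{Coeff}_{t_i^0}$ extraction is a residue operation that does not respect products of powers; consequently the claim that the extracted answer still factorizes as in \eqref{Verlindenum}, with $t$-independent universal series, is genuine content beyond Theorem~\ref{structhm2} rather than a formal corollary of it. Granting the factorization, proving that $A_r,B_r,A_{j,r},B_{jk,r}$ are algebraic functions is the second obstacle: no closed recursion or geometric mechanism forcing algebraicity is available, so only finite-order verification, not a general proof, is within reach by the present methods.
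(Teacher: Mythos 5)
Your proposal is correct and follows essentially the same route as the paper: the statement is a conjecture, and the paper likewise motivates it by the universal expression of Theorem~\ref{structhm2} (before the $\mathrm{Coeff}_{t_i^0}$ extraction), pins down $G_r,F_r$ via the $K3$/Fujiki specialization \eqref{fg}, and supports the factorized form and the algebraicity of $A_r,B_r,A_{j,r},B_{jk,r}$ only through finite-order toric-localization verifications in Section~\ref{sec:verif}. You also correctly isolate the genuinely conjectural content --- that the residue-type $\mathrm{Coeff}_{t_i^0}$ operation should still yield a product of $t$-independent universal series (using $a^2=aK$ for basic classes to absorb the $a_jK$ exponents), which is exactly what the paper's remark following the conjecture indicates.
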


\begin{remark}
These conjectures were partially inspired by the universal formulae \emph{before} taking $\mathrm{Coeff}_{t_{1}^0} \cdots \mathrm{Coeff}_{t_{r-1}^0}$ in Theorems \ref{structhm1}, \ref{structhm2}, and similar universal formulae for Vafa-Witten invariants \cite{GKL}.
\end{remark}

These conjectures imply Conjectures \ref{conj1} and \ref{conj2} from the introduction as follows. For all $\rho \in \Z_{>1}$ and any, possibly empty, subset $J\subset [\rho-1]:=\{1,\ldots,\rho-1\}$, define
\begin{align}
\begin{split} \label{Jseries}
Y_{J,s}&:=Y_s\prod_{j\in J}Y_{j,s}, \quad Z_{J,s}:=Z_s\prod_{i\le j\in J}Z_{ij,s},\quad S_{J,s}:=S_s+\sum_{j\in J}S_{j,s}, \\
 A_{J,r}&:=A_r\prod_{j\in J}A_{j,r}, \quad B_{J,r}:=B_r\prod_{i\le j\in J}B_{ij,r}.\
 \end{split}
 \end{align}
Suppose $S$ is a smooth projective surface $S$ with $b_1(S)=0$, $p_g(S)>0$, and suppose its only Seiberg-Witten basic classes are $0$ and $K \neq 0$ (e.g.~minimal surfaces $S$ of general type satisfying $b_1(S)=0$ and $p_g(S)>0$ \cite[Thm.~7.4.1]{Mor}). Then $\SW(0) = 1$ and $\SW(K) = (-1)^{\chi}$ and the formulae in the conjectures of the introduction follow.

We note that knowing the power series $Y_{J,s}$, $Z_{J,s}$, $S_{J,s}$, $A_{J,r}$, $B_{J,r}$ is equivalent to knowing the power series
$Y_s$, $Y_{i,s}$, $Z_s$, $Z_{ij,s}$, $S_s$, $S_{i,s}$, $A_r$, $A_{i,r}$, $B_r$, $B_{ij,r}$. E.g.~for $Z_{J,s}$, indeed $J = \varnothing$ determines $Z_s$, $J = \{i\}$ then determines $Z_{ii,s}$, and $J = \{i<j\}$ then determines $Z_{ij,s}$.
We provide numerous conjectural formulae (and verifications) for these power series in Section \ref{sec:alg}.

\section{Verifications} \label{sec:verif}

\subsection{Rank 1} \label{sec:verif:rk1}

We start with two propositions, which can be seen as (much easier) rank 1 analogs of the Witten conjecture for $\mathrm{SU}(2)$ Donaldson invariants.
\begin{proposition}  \label{rk1Witten}
For any smooth projective surface $S$ and $L \in \Pic(S)$, we have
$$
\sum_{n=0}^{\infty} z^n \int_{S^{[n]}} e^{\mu(L)  + \mu(\pt) u} = e^{(\frac{1}{2} L^2 + u)z}.
$$
\end{proposition}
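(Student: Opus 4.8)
My plan is to specialize to $\rho=1$, reduce the generating series to a single family of classical intersection numbers on $S^{[n]}$, and evaluate those by a general-position count. First I would unwind the definitions at $\rho=1$: then $M=S^{[n]}$ and the universal sheaf is the ideal sheaf $\I=\I_{\cZ}$ of the universal subscheme $\cZ\subset S\times S^{[n]}$, which has $\ch_0(\I)=1$, $c_1(\I)=0$, $\ch_2(\I)=-[\cZ]$, so that $c_2(\I)=[\cZ]$. Since the term $\tfrac{\rho-1}{2\rho}c_1(\I)^2$ vanishes, the definition of the $\mu$-classes collapses to $\mu(\sigma)=c_2(\I)/\mathrm{PD}(\sigma)=p_*q^*\sigma$. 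Geometrically $\mu(L)=\mu(c_1(L))$ is the divisor $D_C=\{\xi:\mathrm{supp}\,\xi\cap C\neq\varnothing\}$ for $C\in|L|$, and $\mu(\pt)=p_*q^*[\pt]$ is the codimension-two incidence cycle $Z_x=\{\xi:x\in\mathrm{supp}\,\xi\}$, each appearing with multiplicity one.

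Next I would expand $e^{\mu(L)+\mu(\pt)u}$ and extract the top-degree (real degree $4n$) part. Since $\mu(L)\in H^2$ and $\mu(\pt)\in H^4$, the dimension constraint forces only even powers $\mu(L)^{2m}$ with $m=n-b$ to contribute, so the proposition reduces to the single identity
$$\int_{S^{[n]}}\mu(L)^{2m}\,\mu(\pt)^{b}=(2m-1)!!\,(L^2)^m,\qquad m+b=n.$$
To prove this I would represent $\mu(\pt)$ by incidence cycles $Z_{x_1},\dots,Z_{x_b}$ at general points and $\mu(L)$ by divisors $D_{C_1},\dots,D_{C_{2m}}$ with general $C_i\in|L|$, assuming first that $L$ is very ample. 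For general choices the common intersection consists of reduced subschemes $\{x_1,\dots,x_b,y_1,\dots,y_m\}$ in which the $b$ pinned points sit at the $x_j$ and each free point $y_k$ lies on exactly two of the curves; all points are then distinct and lie in the open locus of reduced subschemes, where $S^{[n]}\cong (S^n\setminus\Delta)/\mathfrak S_n$ is smooth and the intersection is transverse. Distributing the $2m$ curves into $m$ unordered pairs in $(2m-1)!!$ ways, each pair $C_i\cap C_{i'}$ contributing its $L^2$ points, yields the stated number. Equivalently, pulling the classes back to $S^{(n)}$ and then to $S^n$ along the degree-$n!$ quotient, the number equals $\tfrac{1}{n!}\int_{S^n}(\sum_i\ell_i)^{2m}(\sum_i p_i)^b$ with $\ell_i=\mathrm{pr}_i^*c_1(L)$ and $p_i=\mathrm{pr}_i^*[\pt]$, which the relations $\ell_i^2=L^2\,p_i$ and $\ell_ip_i=p_i^2=0$ collapse to the same answer.

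Finally I would pass to arbitrary $L$ and $S$ and reassemble. By the universality of intersection numbers on Hilbert schemes \cite{EGL}, each $\int_{S^{[n]}}\mu(L)^{2m}\mu(\pt)^b$ is a universal polynomial in $L^2$, $L\cdot K_S$, $K_S^2$, $\chi$; since the computation above gives $(2m-1)!!\,(L^2)^m$ for all very ample $L$ on enough surfaces to determine this polynomial, the identity must hold for every $L\in\Pic(S)$ and every $S$. Substituting into the expansion of the second paragraph and using $(2m-1)!!/(2m)!=1/(2^m m!)$ gives $\int_{S^{[n]}}e^{\mu(L)+\mu(\pt)u}=\tfrac{1}{n!}(\tfrac12 L^2+u)^n$, and summing over $n$ produces $e^{(\frac12 L^2+u)z}$.

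The main obstacle is the general-position analysis: one must verify that all intersection points genuinely lie in the reduced locus, so that no corrections from the exceptional divisor of the Hilbert--Chow morphism intervene, that each counts transversally with multiplicity one, and then justify the passage from very ample $L$ to arbitrary classes via universality. If one prefers to avoid genericity arguments, this step can instead be carried out algebraically using Nakajima operators, expressing cup product by $\mu(L)$ and $\mu(\pt)$ on the Fock space $\bigoplus_n H^*(S^{[n]})$ and reading off the integrals directly from the commutation relations.
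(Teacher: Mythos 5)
Your proposal is correct in substance but takes a genuinely different route from the paper. The paper never expands the exponential or counts configurations: it proves the cycle-level identity $(1_{S^{\ell}}\times\rho)_*\big(\pi_{1,\ell+1}^*[\cZ]\cdots\pi_{\ell,\ell+1}^*[\cZ]\big)=\pi_{1,\ell+1}^*[\mathcal{D}]\cdots\pi_{\ell,\ell+1}^*[\mathcal{D}]$ for the Hilbert--Chow morphism $\rho$, deduces $\rho_*(\mu(\sigma_1)\cdots\mu(\sigma_\ell))=\tau(\sigma_1)\cdots\tau(\sigma_\ell)$, and then transfers the integral to $S^{(n)}$ and, via $(1_S\times\eps)_*[\Delta]=n!\,[\mathcal{D}]$ for the degree-$n!$ map $\eps:S^n\to S^{(n)}$, to $\frac{1}{n!}\big(\int_S e^{L+\pt\,u}\big)^n$ --- a purely class-level argument valid for arbitrary $L$, with no genericity and no universality step. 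You instead isolate $\int_{S^{[n]}}\mu(L)^{2m}\mu(\pt)^b=(2m-1)!!\,(L^2)^m$, prove it for very ample $L$ by a general-position count, and extend by EGL universality; your reduction and reassembly are verified correct, and your argument that the intersection avoids the non-reduced locus is sound (fewer than $n$ support points leave at most $m-1$ free points to meet $2m$ curves, each on at most two general members). What your route buys is a vivid enumerative picture; what it costs is exactly the step you flag --- the transversality and multiplicity-one verification, plus the very-ample-to-arbitrary reduction --- which the paper's push-forward argument renders unnecessary. One caution: your parenthetical ``equivalently, pulling back to $S^{(n)}$ and then to $S^n$'' is not a free reformulation, since it presupposes $\mu(\sigma)=\rho^*\tau(\sigma)$ (for $\mu(\pt)$ in particular), which is precisely the content of the paper's push-forward identity and would itself need proof; and your closing Nakajima-operator suggestion is in fact how the paper proves the companion Proposition \ref{rk1Wittenmatter} with the $c(\mathcal{L}^{[n]})$-insertion.
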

\begin{proof}
The universal sheaf $\E$ on $S\times S^{[n]}$ is the ideal sheaf $\I_{\cZ}$ of the universal subscheme $\cZ \subset S\times S^{[n]}$. Therefore $c_2(\E)= [\cZ ]$, i.e.~the (Poincar\'e dual of the) fundamental class of $\cZ$.
Consider the symmetric product $S^{(n)}$ and the Hilbert-Chow morphism
$$
\rho : S^{[n]} \rightarrow S^{(n)}
$$
and denote the universal $2n$-cycle by $\mathcal{D} \subset S \times S^{(n)}$. Analogous to the $\mu$-insertion $\mu(\sigma)= \pi_{S^{[n]}*}( \pi_S^*\sigma \cdot  [\mathcal{Z}] )$, we define
\begin{equation*}
\tau(\sigma):= \pi_{S^{(n)}*}\big( \pi_S^*\sigma \cdot  [\mathcal{D}] \big) \in H^*(S^{(n)},\Q),
\end{equation*}
for all $\sigma \in H^*(S,\Q)$. Note that $(1_S \times \rho)_* [\mathcal{Z}] = [\mathcal{D}]$, or more generally
\begin{equation} \label{mutotau}
(1_{S^{\ell}} \times \rho)_* \big( \pi_{1,\ell+1}^* [\cZ] \cdots \pi_{\ell,\ell+1}^* [\cZ] \big) = \pi_{1,\ell+1}^* [\mathcal{D}] \cdots \pi_{\ell,\ell+1}^* [\mathcal{D}], 
\end{equation}
where $\pi_{i,\ell+1} : S^{\ell} \times S^{[n]} \rightarrow S \times S^{[n]}$ denotes the projection onto the factors $(i,\ell+1)$ and similarly on $S^{\ell} \times S^{(n)}$. Therefore
\begin{equation} \label{pushrho}
\rho_* (\mu(\sigma_1) \cdots \mu(\sigma_{\ell})) = \tau(\sigma_1) \cdots \tau(\sigma_{\ell}),
\end{equation}
for all $\sigma_1, \ldots, \sigma_{\ell} \in H^*(S,\Q)$. Hence
\begin{align*}
\int_{S^{[n]}} e^{\mu(L)  + \mu(\pt) u} = \int_{S^{(n)}} e^{\tau(L)  + \tau(\pt) u} .
\end{align*}

Next, consider the natural degree $n!$ morphism
$$
\eps : S^{n} \rightarrow S^{(n)}.
$$
On $S \times S^{n}$, we consider the $2n$-cycle 
$$
\Delta := \sum_{i=2}^{n+1} \Delta_{1i},
$$
where $\Delta_{1i} := \{ p \in S \times S^{n} : p_1 = p_i\}$. Analogous to the $\mu$- and $\tau$-insertions, we define
\begin{equation*}
\nu(\sigma):= \pi_{S^{n}*}\big( \pi_S^*\sigma \cdot  [\Delta] \big) = \pi_1^* \sigma + \cdots + \pi_{\ell}^* \sigma \in H^*(S^{n},\Q),
\end{equation*}
for all $\sigma \in H^*(S,\Q)$ and where $\pi_ i : S^{\ell} \rightarrow S$ denotes projection on the $i$th component. Using $(1_S \times \eps)_* [\Delta] =  n! [\mathcal{D}]$ (and the analog of \eqref{mutotau}), we find
$$
\eps_* (\nu(\sigma_1) \cdots \nu(\sigma_{\ell})) = n! \tau(\sigma_1) \cdots \tau(\sigma_{\ell}).
$$
Therefore 
\begin{align*}
\int_{S^{(n)}} e^{\tau(L)  + \tau(\pt) u} &= \frac{1}{n!} \int_{S^{n}} e^{\nu(L)  + \nu(\pt) u} \\
&= \frac{1}{n!} \int_{S^{n}} e^{\pi_1^*(L + \pt \, u) + \cdots + \pi_n^*(L + \pt \, u) } \\
&= \frac{1}{n!} \Big(\int_S e^{L +  \pt \, u}\Big)^n = \frac{1}{n!} \Big( \frac{1}{2} L^2 + u \Big)^n. \qedhere
\end{align*}
\end{proof}

\begin{proposition} \label{rk1Wittenmatter}
For any smooth projective surface $S$ and $\mathcal{L}, L \in \Pic(S)$, we have
$$
\sum_{n=0}^{\infty} z^n \int_{S^{[n]}} c(\mathcal{L}^{[n]})  \, e^{\mu(L)  + \mu(\pt) u} = e^{(\frac{1}{2} L^2 + \mathcal{L} L + u)z}.
$$
\end{proposition}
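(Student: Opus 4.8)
The plan is to re-run the geometric reduction from the proof of Proposition~\ref{rk1Witten}, now carrying the extra factor $c(\mathcal{L}^{[n]})$, and to show that under this reduction the integrand factorizes over the $n$ points with single-point weight $c(\mathcal{L})\,e^{L+u\,\pt}$. The first thing I would record is the reformulation of the target as an exponential of a single-point integral: since
$$
\int_S c(\mathcal{L})\,e^{L+u\,\pt} = \tfrac12 L^2 + \mathcal{L}L + u,
$$
read off as the degree-four part of $(1+\mathcal{L})(1+L+u\,\pt+\tfrac12 L^2)$, the claimed formula is exactly $\sum_n z^n\int_{S^{[n]}} c(\mathcal{L}^{[n]})\,e^{\mu(L)+\mu(\pt)u} = \exp\!\big(z\int_S c(\mathcal{L})\,e^{L+u\,\pt}\big)$. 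Thus the only effect of inserting $c(\mathcal{L}^{[n]})$ is to replace the single-point weight $e^{L+u\,\pt}$ of Proposition~\ref{rk1Witten} by $c(\mathcal{L})\,e^{L+u\,\pt}$, and the whole point is to prove this factorization.

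To make the factorization precise I would work with the correspondence $\Gamma$ given by the reduced closure in $S^{[n]}\times S^n$ of the locus of pairs $([Z],(p_1,\dots,p_n))$ with $Z$ reduced and $\{p_1,\dots,p_n\}$ its points in some order (the isospectral Hilbert scheme). The projections $p:\Gamma\to S^{[n]}$ and $q:\Gamma\to S^n$ are generically finite of degrees $n!$ and $1$, so $\int_{S^{[n]}}\beta=\tfrac1{n!}\int_\Gamma p^*\beta$ and $\int_\Gamma q^*\gamma=\int_{S^n}\gamma$. Applying this to $\beta=c(\mathcal{L}^{[n]})\,e^{\mu(L)+\mu(\pt)u}$, I would establish on $\Gamma$ the two identities
$$
c(p^*\mathcal{L}^{[n]}) = q^*\prod_{i=1}^n \pi_i^* c(\mathcal{L}), \qquad p^*\mu(\sigma) = q^*\Big(\sum_{i=1}^n \pi_i^*\sigma\Big).
$$
The second reduces, exactly as the $\rho,\eps$ computation in Proposition~\ref{rk1Witten}, to the cycle identity $[\,\cZ_\Gamma\,]=\sum_i[\Gamma_{q,i}]$ on $S\times\Gamma$ between the pulled-back universal subscheme $\cZ_\Gamma$ and the sum of the graphs of the ordered points $\pi_i\circ q$, which holds because a length-$m$ punctual fibre and the corresponding $m$ coincident ordered points have the same associated cycle. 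Granting both identities, $p^*\beta=q^*\prod_i\pi_i^*\big(c(\mathcal{L})\,e^{L+u\,\pt}\big)$, and pushing to $S^n$ gives
$$
\int_{S^{[n]}} c(\mathcal{L}^{[n]})\,e^{\mu(L)+\mu(\pt)u} = \frac1{n!}\int_{S^n}\prod_{i=1}^n \pi_i^*\big(c(\mathcal{L})\,e^{L+u\,\pt}\big) = \frac1{n!}\Big(\int_S c(\mathcal{L})\,e^{L+u\,\pt}\Big)^{n},
$$
and summing over $n$ yields the claim.

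The main obstacle is the first identity. Over the open locus of distinct points the fibre of $\mathcal{L}^{[n]}$ is $\bigoplus_i\mathcal{L}_{p_i}$ and the identity is immediate; the content lies entirely over the non-reduced (boundary) locus, where $\mathcal{L}^{[n]}$ does not literally split. Since Chern characters determine total Chern classes over $\Q$, the identity is equivalent to the K-theoretic statement $[p^*\mathcal{L}^{[n]}]=\sum_i q^*\pi_i^*[\mathcal{L}]$, which by base change and Grothendieck--Riemann--Roch reduces in turn to $[\,\O_{\cZ_\Gamma}\,]=\sum_i[\O_{\Gamma_{q,i}}]$ in $K(S\times\Gamma)$. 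I would deduce this from a filtration of $\O_{\cZ_\Gamma}$ whose successive quotients are the structure sheaves of the ordered graphs, a consequence of the flag structure carried by the isospectral Hilbert scheme; the accompanying bookkeeping (the degree and projection-formula identities for the possibly singular $\Gamma$) I would justify on a resolution or via the intersection theory of the Cohen--Macaulay $\Gamma$, noting that this affects only the bookkeeping and not the generic computation.

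As a cross-check, and to sidestep the isospectral machinery, one can instead invoke the universality of tautological integrals \cite{EGL}: the generating series is a product of universal power series in $z$ raised to the intersection numbers $L^2,\ \mathcal{L}L,\ \mathcal{L}^2,\ LK,\ \mathcal{L}K,\ K^2,\ c_2(S),\ u$. Specializing $\mathcal{L}=0$ and comparing with Proposition~\ref{rk1Witten} pins down every series not involving $\mathcal{L}$, and a short computation on a model surface (or on $S^{[1]}=S$ and $S^{[2]}$) forces the $\mathcal{L}L$-series to be $e^{z}$ and the $\mathcal{L}^2$- and $\mathcal{L}K$-series to be trivial. I would present the correspondence argument as the main route and treat this universality argument as a verification.
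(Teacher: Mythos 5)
Your reduction of the $\mu$-classes to the ordered product $S^n$ is fine (it is essentially the paper's, which phrases it via $\mu(\sigma)=\rho^*\tau(\sigma)$ and $\eps^*\tau(\sigma)=\nu(\sigma)$), but your key lemma is false, and it is false exactly where you locate ``the main obstacle''. The identity $c(p^*\mathcal{L}^{[n]})=q^*\prod_i\pi_i^*c(\mathcal{L})$, equivalently $[\O_{\cZ_\Gamma}]=\sum_i[\O_{\Gamma_{q,i}}]$ in $K(S\times\Gamma)$, already fails for $n=2$ and $\mathcal{L}=\O_S$. Here $\Gamma=\Bl_{\Delta}(S\times S)$ with exceptional divisor $E$, and $p:\Gamma\to S^{[2]}$ is the double cover ramified along $E$, so $p^*[B]=2E$ where $B$ is the divisor of non-reduced subschemes. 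Since $\det\O_S^{[2]}=\O(-\delta)$ with $2\delta=[B]$, one gets
$$
c_1\big(p^*\O_S^{[2]}\big)=-p^*\delta=-E\ \neq\ 0=q^*\textstyle\sum_i\pi_i^*c_1(\O_S),
$$
so no filtration of $\O_{\cZ_\Gamma}$ with \emph{untwisted} quotients $\O_{\Gamma_{q,i}}$ can exist. What is true is only the cycle-level statement you use for the $\mu$-classes: the graphs compute the fundamental class, i.e.\ the $\ch_2$-part of $\O_{\cZ_\Gamma}$, while $\ch_{\geq 3}(\O_{\cZ_\Gamma})$ carries corrections supported on the boundary (on nested Hilbert schemes the successive quotients are point classes twisted by nontrivial boundary line bundles), and $c(\mathcal{L}^{[n]})$ --- unlike $\mu$ --- sees these corrections, e.g.\ through $c_1(\mathcal{L}^{[n]})=\mu(c_1(\mathcal{L}))-\delta$.

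The missing content is precisely that these boundary corrections die upon integration against classes pulled back from $S^{(n)}$ (for instance $\rho_*\delta=0$ since $\rho$ contracts $B$ onto the $2$-dimensional diagonal, whence $\int_{S^{[2]}}\delta\,\mu(L)^3=0$), and this is the step the paper's proof supplies and yours omits: the paper expands $\sum_n c(\mathcal{L}^{[n]})z^n=\exp\big(\sum_{i\geq 1}\tfrac{(-1)^{i-1}}{i}\mathfrak{q}_i(c(\mathcal{L}))z^i\big)\mathbbm{1}$ by Lehn's theorem and then proves, via Brian\c{c}on's dimension count for the fibres of $\mathcal{Z}^{n,n+i}\to\mathcal{D}^{n,n+i}$, that $\rho_*\circ\mathfrak{q}_i$ vanishes for $i>1$, so only the $\mathfrak{q}_1$-terms survive pushforward to $S^{(n)}$ --- these are exactly your single-point weight $c(\mathcal{L})e^{L+u\,\pt}$. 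Without an analogous vanishing argument your main route does not close, since you push forward an equality of classes that does not hold. Your fallback does not rescue it either: universality \`a la \cite{EGL} correctly reduces the claim to determining the three series attached to $\mathcal{L}L$, $\mathcal{L}^2$, $\mathcal{L}K$, but a computation on $S^{[1]}$ and $S^{[2]}$ (or any finite-order model computation) pins them down only modulo $z^3$ (resp.\ finite order), not to all orders --- as you yourself concede by calling it a verification rather than a proof.
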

\begin{proof}
For any $n$, we denote the Hilbert-Chow morphism by
$$
\rho : S^{[n]} \rightarrow S^{(n)}.
$$
Let $\mathbb{H} := \bigoplus_{n \geq 0} H^*(S^{[n]},\Q)$. For any $\sigma \in H^*(S,\Q)$ and $i>0$, we recall the definition of the Nakajima creation operator \cite{Nak}
$$
\mathfrak{q}_{i}(\sigma) : \mathbb{H} \rightarrow \mathbb{H}
$$
defined on $H^*(S^{[n]},\Q)$ by the correspondence
$$
\mathfrak{q}_{i}(\sigma)(-) = \pi_{S^{[n+i]} *} \Big( \pi_S^* \sigma \cdot \pi_{S^{[n]}}^*(-) \cdot [\mathcal{Z}^{n,n+i}] \Big), 
$$
where
\begin{displaymath}
\xymatrix
{
& S^{[n]} \times S \times S^{[n+i]} \ar_{\pi_{S^{[n]}}}[dl] \ar^{\pi_S}[d] \ar^{\pi_{S^{[n+i]}}}[dr] & \\
S^{[n]} & S & S^{[n+i]}
}
\end{displaymath}
and $\mathcal{Z}^{n,n+i} \subset S^{[n]} \times S \times S^{[n+i]}$ is the incidence locus of triples $(Z,p,Z')$ satisfying $Z \subset Z'$ and $\rho(Z') = \rho(Z) + ip$. Then \cite[Thm.~4.6]{Leh} 
\begin{equation}\label{lech}
\sum_{n=0}^{\infty} c(\mathcal{L}^{[n]}) z^n = \exp\Big( \sum_{i=1}^{\infty} \frac{(-1)^{i-1}}{i} \mathfrak{q}_{i}(c(\mathcal{L})) z^i \Big) \mathbbm{1},
\end{equation}
where $\mathbbm{1}$ denotes the fundamental class of $S^{[0]} = \mathrm{pt}$.

We denote by $\mathcal{D}^{n,n+i} \subset S^{(n)} \times S \times S^{(n+i)}$  the incidence locus of triples $(Z,p,Z')$ satisfying $Z' = Z+ip$. Note that the natural projection  $\mathcal{D}^{n,n+i} \to  S^{(n)} \times S$ is an isomorphism. 
On $\bigoplus_{n \geq 0} H^*(S^{(n)},\Q)$ we consider operators $\mathfrak{p}_{i}(\sigma)$ defined on $H^*(S^{(n)},\Q)$ by the correspondence
$$
\mathfrak{p}_{i}(\sigma)(-) = \pi_{S^{(n+i)} *} \Big( \pi_S^* \sigma \cdot \pi_{S^{(n)}}^*(-) \cdot [\mathcal{D}^{n,n+i}] \Big).
$$
Consider the commutative diagram
\begin{displaymath}
\xymatrix
{
& \mathcal{Z}^{n,n+i} \ar@{->>}_{\pi_{S^{[n]}} \times \pi_S}[dl] \ar^{\rho \times 1_S \times \rho}[d] \ar@/^/[dr]^{\pi_{S^{[n+i]}}}&  \\
S^{[n]} \times S \ar_{\rho \times 1_S}[dr] &  \mathcal{D}^{n,n+i} \ar^{\cong}_\pi[d] \ar^{\pi_{S^{(n+i)}}}[dr]&S^{[n+i]}\ar^{\rho}[d] \\
& S^{(n)} \times S\ar[r]&S^{(n+i)}.
}
\end{displaymath}
The general fibre of the map $\rho\times 1_S\times \rho:\mathcal{Z}^{n,n+i}\to\mathcal{D}^{n,n+i}$ is irreducible of dimension $i-1$ (Brian\c{c}on). For any  $W\in H_k(S^{[n]}\times S,\Q)$, we have $ (\pi_{S^{[n]}}\times\pi_S)^*(W)\in H_{k+2i-2}(\mathcal{Z}^{n,n+i},\Q )$. However, the dimension of the support of $(\rho\times 1_S\times \rho)_*(\pi_{S^{[n]}}\times\pi_S)^*(W)$ is at most $k$. Thus for any $i>1$, we get that
$$
(\rho\times 1_S\times \rho)_*\circ (\pi_{S^{[n]}}\times\pi_S)^*: H^*(S^{[n]}\times S,\Q)\to H^*(\mathcal{D}^{n,n+i},\Q)
$$ 
is the zero map. 
This gives 
$
\rho_* \big(\mathfrak{q}_{i}(\sigma)(\beta) \big) = 0,
$
for all  $i>1$, $\sigma\in H^*(S,\Q)$, $\beta \in H^*(S^{[n]},\Q)$. 

As the creation operators $\mathfrak{q}_{i}(\sigma_i)$ commute, this implies that 
$$
\rho_* \big(\mathfrak{q}_{i_1}(\sigma_1)\cdots \mathfrak{q}_{i_\ell}(\sigma_\ell)\mathbbm{1} \big) = 0,
$$
for all $\sigma_1, \ldots, \sigma_{\ell} \in H^*(S,\Q)$ and $i_1, \ldots, i_\ell \geq 1$ with at least one $i_k>1$. 
Therefore we get from \eqref{lech} that
$\rho_*c(\mathcal{L}^{[n]})=\rho_{*}\exp(\mathfrak{q}_1(\mathcal{L}))\mathbbm{1}$.

On the other hand, in case $i=1$, let  $\sigma\in H_*(S,\Q)$, $\beta\in H_*(S^{(n)},\Q)$, then the map $\rho\times 1_S\times \rho$ restricted to the inverse image in $\mathcal{Z}^{n,n+1}$ of the support of a cycle $\beta\times \sigma$ is generically one-to-one. Thus
 $$
(\rho\times 1_S\times \rho)_*(\pi_{S^{[n]}}\times\pi_S)^*(\rho\times 1_S)^*(\beta\times \sigma) =\pi^*(\beta\times \sigma).
$$
This gives $\rho_*(\mathfrak{q}_1(\sigma)\rho^*(\beta))=\mathfrak{p}_1(\sigma)\beta$, and thus inductively $\rho_{*}((\mathfrak{q}_1(\sigma))^n \mathbbm{1})=(\mathfrak{p}_1(\sigma))^n \mathbbm{1}$.
Note that for $\sigma\in H^*(S,\Q)$ we have $\mu(\sigma)=\rho^* \tau(\sigma)$.
Therefore, by \eqref{lech} and the projection formula, we get
\begin{align*}
\int_{S^{[n]}} c(\mathcal{L}^{[n]})  \, e^{\mu(L)  + \mu(\pt) u}  &= \frac{1}{n!} \int_{S^{[n]}}  e^{\mu(L)  + \mu(\pt) u}  (\mathfrak{q}_1(c(\mathcal{L})))^n \mathbbm{1}\\
&=\frac{1}{n!} \int_{S^{(n)}}  e^{\tau(L)  + \tau(\pt) u}  (\mathfrak{p}_1(c(\mathcal{L})))^n \mathbbm{1}.
\end{align*}
Finally, using the morphism $\eps : S^n \rightarrow S^{(n)}$, we have
$$
(\mathfrak{p}_1(\sigma))^n \mathbbm{1} = \eps_* \big( \pi_1^* \sigma \cdots \pi_n^* \sigma \big),
$$ 
for all $\sigma \in H^*(S,\Q)$, where $\pi_i : S^n \rightarrow S$ denotes projection on the $i$th component. Using the notation of the proof of Lemma \ref{rk1Witten}, we have $\nu(\sigma) = \eps^* \tau(\sigma)$, for all $\sigma \in H^*(S,\Q)$. This implies
\begin{align*}
\int_{S^{[n]}} c(\mathcal{L}^{[n]})  \, e^{\mu(L)  + \mu(\pt) u}  &= \frac{1}{n!} \int_{S^n} (\pi_1^* c(\mathcal{L}) \cdots \pi_n^* c(\mathcal{L})) \, e^{\nu(L)  + \nu(\pt) u} \\
&= \frac{1}{n!} \int_{S^n} (\pi_1^* c(\mathcal{L}) \cdots \pi_n^* c(\mathcal{L})) \, e^{\pi_1^*(L + \pt \, u) + \cdots + \pi_n^*(L +  \pt \, u)} \\
&= \frac{1}{n!} \Big( \int_{S} c(\mathcal{L}) e^{L + \pt \, u} \Big)^n = \frac{1}{n!} \Big( \frac{1}{2} L^2 + \mathcal{L} L + u \Big)^n. \qedhere
\end{align*}
\end{proof}

We summarize what is known for rank 1 Segre integrals:
\begin{theorem}[Marian-Oprea-Pandharipande+$\varepsilon$] \label{rk1prop}
There exist $V_s$, $W_s$, $X_s$, $Y_s$, $Z_s \in 1+z \, \Q[[z]]$, $Q_s$, $R_s$, $S_s$, $T_s \in z \Q[[z]]$ with the following properties. Let $S$ be a smooth projective surface, $\alpha \in K(S)$ such that $\rk(\alpha) = s$, and $L \in \Pic(S)$. Then
\begin{equation*} 
\sum_{n=0}^{\infty} z^n \int_{S^{[n]}} c( \alpha^{[n]} ) \, e^{\mu(L)  + \mu(\pt) u} = V_s^{c_2(\alpha)} \, W_s^{c_1(\alpha)^2} \, X_s^{\chi} \, Y_{s}^{c_1(\alpha) K} \, Z_{s}^{K^2} \, e^{L^2 Q_s + (c_1(\alpha)L) R_s + (LK) S_{s}  + u \, T_s}.
\end{equation*}
By \cite{MOP3}, under the formal change of variables $z = t(1+(1-s)t)^{1-s}$, we have 
\begin{align*}
V_s(z) &= (1+(1-s)t)^{1-s} (1+(2-s)t)^s, \\ 
W_s(z) &= (1+(1-s)t)^{\frac{1}{2}s-1} (1+(2-s)t)^{\frac{1}{2}(1-s)}, \\
X_s(z) &= (1+(1-s)t)^{\frac{1}{2} s^2-s} (1+(2-s)t)^{-\frac{1}{2}s^2+\frac{1}{2}}(1+(2-s)(1-s)t)^{-\frac{1}{2}}.
\end{align*}
Also by \cite{MOP3}, under the same change of variables, $Y_s$ is determined for $s \in \{-2,-1,1,2\}$ and $Z_s$ for $s \in \{-2,-1,0,1,2\}$. 
Furthermore
\begin{align*}
&Q_0(z) = \tfrac{1}{2} z, \quad S_0(z) = 0, \quad  T_0(z) = z, \\
&Q_1(z) = \tfrac{1}{2} z, \quad R_1(z) = z, \quad S_1(z) = 0, \quad T_1(z) = z.
\end{align*}
\end{theorem}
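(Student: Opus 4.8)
The plan is to assemble the statement from three ingredients: the universality machinery of \cite{EGL}, the Segre formulae of \cite{MOP3} (Theorem \ref{MOPthm1}), and the two rank-$1$ computations of Propositions \ref{rk1Witten} and \ref{rk1Wittenmatter}. First I would establish the factorized shape. For $\rho=1$ the universal sheaf is the ideal sheaf $\I_\cZ$, so the $\mu$-classes $\mu(L)=\pi_{S^{[n]}*}(\pi_S^*c_1(L)\cdot[\cZ])$ and $\mu(\pt)$ are tautological, and hence (coefficientwise in $u$) the integrand $c(\alpha^{[n]})\,e^{\mu(L)+\mu(\pt)u}$ is a polynomial in tautological classes. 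By \cite[Thm.~4.1]{EGL} each coefficient of $z^n$ is then a universal polynomial in the numbers $c_2(\alpha),c_1(\alpha)^2,c_1(\alpha)K,\chi,K^2$ together with $L^2,c_1(\alpha)L,LK$ and the formal parameter $u$. The product form is forced by the usual disjoint-union argument: for $S=S'\sqcup S''$ the Hilbert schemes and tautological bundles split and the $\mu$-classes are additive, so the generating series is multiplicative exactly as in Step~3 of the proofs of Theorems \ref{structhm1} and \ref{structhm2}; passing to logarithms makes the exponent additive in the listed numbers and produces universal series $V_s,\dots,Z_s\in1+z\Q[[z]]$ and $Q_s,R_s,S_s,T_s\in z\Q[[z]]$.

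Next I would identify $V_s,W_s,X_s$ and the known $Y_s,Z_s$. Specializing $L=u=0$ removes every factor involving $L$ and $u$ and returns the pure Segre series $\sum_n z^n\int_{S^{[n]}}c(\alpha^{[n]})$, which by Theorem \ref{MOPthm1} equals $V_s^{c_2(\alpha)}W_s^{c_1(\alpha)^2}X_s^{\chi}Y_s^{c_1(\alpha)K}Z_s^{K^2}$; by uniqueness of the universal series these $V_s,W_s,X_s,Y_s,Z_s$ coincide with those of \cite{MOP3}, so the displayed formulae for $V_s,W_s,X_s$ and the determinations of $Y_s$ (for $s\in\{-2,-1,1,2\}$) and $Z_s$ (for $s\in\{-2,-1,0,1,2\}$) carry over.

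The genuinely new content is the evaluation of $Q_0,S_0,T_0$ and $Q_1,R_1,S_1,T_1$, which I would extract from the two propositions. For $s=0$ I take $\alpha=0$ in Proposition \ref{rk1Witten}; a dimension count gives $\sum_n z^n\int_{S^{[n]}}1=1$, hence $X_0=Z_0=1$, and the proposition reduces to $e^{L^2Q_0+(LK)S_0+uT_0}=e^{(\frac12 L^2+u)z}$, so matching the independent intersection numbers $L^2,LK,u$ yields $Q_0=\tfrac12 z$, $S_0=0$, $T_0=z$. For $s=1$ I take $\alpha=[\mathcal{L}]$ a line bundle in Proposition \ref{rk1Wittenmatter}; since $\mathcal{L}^{[n]}$ has rank $n<2n$, the same dimension count gives the trivial Segre series $1$, forcing $W_1=X_1=Y_1=Z_1=1$, and the proposition becomes $e^{L^2Q_1+(c_1(\mathcal{L})L)R_1+(LK)S_1+uT_1}=e^{(\frac12 L^2+c_1(\mathcal{L})L+u)z}$, whence $Q_1=\tfrac12 z$, $R_1=z$, $S_1=0$, $T_1=z$.

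Because Propositions \ref{rk1Witten}, \ref{rk1Wittenmatter} and Theorem \ref{MOPthm1} are already available, I expect no deep obstacle; the only point needing care is the legitimacy of the coefficient comparisons, namely exhibiting enough surfaces $S$ and pairs $\mathcal{L},L\in\Pic(S)$ that realize independent values of the relevant intersection numbers, so that the universal series can be separated term by term. This is routine in the \cite{EGL} framework. I also note that $R_0$ is not determined by this method, precisely because Proposition \ref{rk1Witten} has $\alpha=0$ and thus $c_1(\alpha)=0$; this explains its absence from the list of determined series.
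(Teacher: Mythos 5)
Your proposal is correct and follows essentially the same route as the paper: universality via the disjoint-union/multiplicativity argument in the \cite{EGL} framework (the rank-$1$ analog of Steps 1--3 of Theorem \ref{structhm2}), identification of $V_s,W_s,X_s,Y_s,Z_s$ with the \cite{MOP3} series by setting $L=u=0$, and extraction of $Q_0,S_0,T_0$ and $Q_1,R_1,S_1,T_1$ from Propositions \ref{rk1Witten} and \ref{rk1Wittenmatter}. The only cosmetic difference is that you derive the needed trivializations ($X_0=Z_0=1$ and $W_1=X_1=Y_1=Z_1=1$) from dimension/rank vanishing of $\int_{S^{[n]}}1$ and $\int_{S^{[n]}}c(\mathcal{L}^{[n]})$ rather than citing \cite{MOP3} as the paper does, which is an equally valid (and self-contained) way to close the same step, and your remark that $R_0$ is left undetermined correctly matches its absence from the statement.
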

\begin{proof}
Universality follows from (a more elementary analog of) Steps 1--3 in the proof of Theorem \ref{structhm2}. Setting $L = u =0$, the statements about $V_s, W_s, X_s, Y_s, Z_s$ follow from \cite{MOP3}. Furthermore, $X_0(z) = X_1(z) = Z_0(z) = Z_1(z) = 1$ \cite{MOP3}. Hence the rest of the theorem follows from Propositions \ref{rk1Witten} and \ref{rk1Wittenmatter}.
\end{proof}

\begin{remark} \label{rk1basis}
We verified the formulae for $Q_s, R_s, T_s$ of Conjecture \ref{conj1} for $\rho=1$ modulo $z^{11}$.
For this, we choose a collection of triples $(S,\alpha,L)$ such that the corresponding vectors 
$$
(c_2(\alpha), c_1(\alpha)^2, \chi, c_1(\alpha) K, K^2, L^2, c_1(\alpha)L, LK)
$$
are $\Q$-linearly independent. Taking $S$ a toric surface with torus $T$, we obtain a lift of the $T$-action to $S^{[n]}$ with isolated reduced fixed points indexed by collections of monomial ideals.
By taking $\alpha$ and $L$ with $T$-equivariant structure, one can calculate the Segre numbers with $\mu$-insertions for $(S,\alpha,L)$ by Atiyah-Bott localization. Specifically, we choose 
$
(S,\alpha,L) = (\PP^2,\O^{\oplus s},\O)$, $(\PP^1\times \PP^1,\O^{\oplus s},\O)$, $(\PP^2,\O(1)\oplus \O^{\oplus s-1},\O)$, $(\PP^2,\O^{\oplus s},\O(1)))$, $(\PP^2,\O(1)\oplus \O^{\oplus s-1},\O(1))$, $(\PP^2,\O(1)^{\oplus 2}\oplus \O^{\oplus s-2},\O)$,
 $(\PP^1\times \PP^1,\O(0,1)\oplus \O^{\oplus s-1},\O)$, $(\PP^1\times \PP^1,\O^{\oplus{s}},\O(0,1))$ and calculate their Segre numbers with $\mu$-insertion up to order 10. 
\end{remark}

\subsection{Higher rank: calculations} \label{sec:verif:higherrk}

We want to explicitly calculate the first few coefficients of the universal power series of Theorems \ref{structhm1}, \ref{structhm2} for ranks $\rho=2,3,4$. The definition of the generating function
\begin{equation*} 
\mathsf{Z}_S(\alpha,L,\boldsymbol{a} ,\boldsymbol{t},u,q)
\end{equation*}
in \eqref{defZ} makes sense for \emph{any} smooth projective surface $S$ and \emph{any} $\alpha \in K(S)$, $L \in \Pic(S)$, and $\boldsymbol{a}=(a_1, \ldots,a_{\rho}) \in A^1(S)^\rho$. Consider any finite collection $\mathcal{C}$ containing $(S,\alpha,L,\boldsymbol{a})$ such that the corresponding vectors
\begin{align*}
(c_1(\alpha)^2,  c_1(\alpha)L, c_1(\alpha)K, c_2(\alpha), L^2, LK, K^2, \chi, \{a_ic_1(\alpha)\}, \{a_iL\}, \{a_iK\}, \{a_ia_j\})
\end{align*}
are $\Q$-linearly independent. Then the universal function of Theorem \ref{structhm1} is determined by $\mathsf{Z}_S(\alpha,L,\boldsymbol{a} ,\boldsymbol{t},u,q)$ on this finite collection $\mathcal{C}$ via equation \eqref{ZA}. 

Now take $(S,\alpha,L,\boldsymbol{a})$ such that $S$ is a toric surface with torus $T$ and $\alpha$, $L$, $\boldsymbol{a}$ are $T$-equivariant. The action of $T$ on $S$ lifts to $S^{[\boldsymbol{n}]}$ for any $\boldsymbol{n} = (n_1, \ldots, n_{\rho}) \in \Z_{\geq 0}^{\rho}$. Therefore, we can apply the Atiyah-Bott localization formula to explicitly determine $\mathsf{Z}_S(\alpha,L,\boldsymbol{a} ,\boldsymbol{t},u,q)$ up to some order in $q$. We carried this out for $S = \PP^2$, $\PP^1\times \PP^1$ and certain choices of $\alpha$, $L$, $\boldsymbol{a}$ similar to Remark \ref{rk1basis}. The reader can consult \cite{GK1, GK2, GK3, GKW, Laa1} for more details on Atiyah-Bott calculations in closely related settings. This discussion holds analogously in the Verlinde case of Theorem \ref{structhm2}. 

We determined the universal functions of Theorems \ref{structhm1}, \ref{structhm2} up to the following orders: 
\begin{itemize}
\item \textbf{Rank $\rho=2$.} Keeping $s$ as a variable, we determined $A_{\bullet,s}(q)$ (i.e.~$A_{J,s}$ for all $J \subset [\rho-1]$) modulo $q^{11}$. For $s=5,6$ we determined $A_{\bullet,s}(q)$ modulo $q^{26}$. Keeping $r$ as a variable, we determined $B_{\bullet,r}(q)$ modulo $q^{16}$.
\item \textbf{Rank $\rho=3$.} For $s \in \{-3, \ldots, 12\}$, we determined $A_{\bullet,s}(q)$ modulo $q^{10}$. For $s=5,6$ we determined $A_{\bullet,s}(q)$ modulo $q^{26}$. For $r \in \{-11,\ldots, 3\}$, we determined $B_{\bullet,r}(q)$ modulo $q^{9}$.
\item \textbf{Rank $\rho=4$.} For $s \in \{0, \ldots, 8\}$, we determined $A_{\bullet,s}(q)$ modulo $q^{8}$. For $s=-1$, we also determined $A_{\underline{a_4^2},s}, A_{\underline{a_4 K},s}, A_{\underline{K^2},s}, A_{\underline{\chi},s}$ modulo $q^8$. For $\rho=4$, the Verlinde calculations are harder and we determined no coefficients of the $B_{\bullet,r}(q)$. 
\end{itemize}
With this data, we can verify Conjectures \ref{conj1strong}, \ref{conj2strong}, and \ref{conj3} in the following cases (always for certain values of $H,c_1$ such that there are no strictly semistable sheaves): \\

\noindent \textbf{Rank $\rho=2$}.
\begin{itemize}
\item Conjecture \ref{conj1strong} holds for $S$ a $K3$ surface and virtual dimension up to 16, for $S$ an elliptic surface\footnote{An elliptic surface of type $E(n)$ is an elliptic surface $S \rightarrow \PP^1$ with section, $12n$ rational 1-nodal fibres, and no further singular fibres.} of types $E(3)$, $E(4)$, $E(5)$ up to virtual dimension $16$, for $S$
a double cover of $\PP^2$ branched along a smooth octic up to virtual dimension $14$, for $S$ a double cover of $\PP^1\times \PP^1$ branched along a smooth curve of bidegree $(6,6)$ up to virtual dimension $14$, and 
for $S$ a general quintic in $\PP^3$ up to virtual dimension $12$. 
Conjecture  \ref{conj1strong} also holds for $S$  the blow-up of one of the above surfaces in a point, with the same bounds on the virtual dimension. 
Conjecture \ref{conj1strong} also holds for $S$ an elliptic surface of type $E(3)$ up to virtual dimension 18.
\item Conjecture \ref{conj2strong}  holds for $S$ a $K3$ surface and virtual dimension up to $18$, for the blow-up of a $K3$ surface in a point up to virtual dimension $13$, for $S$ an elliptic surface of type $E(3)$
up to virtual dimension $18$, for $S$ an elliptic surface of type $E(4)$ up to virtual dimension $12$, for $S$ an elliptic surface of type $E(5)$ up to virtual dimension $10$, for $S$
a double cover of $\PP^2$ branched along a smooth octic up to virtual dimension $12$, and 
for $S$ a general quintic in $\PP^3$ up to virtual dimension $10$.
\item Conjecture \ref{conj3} holds up to virtual dimension $18$.
\end{itemize}

\noindent \textbf{Rank $\rho=3$}.
\begin{itemize}
\item Let $s\in\{-3,\ldots,12\}$. Then Conjecture \ref{conj1strong} holds for $S$ a $K3$ surface up to virtual dimension $14$, for $S$ the blow-up of a $K3$ surface in a point up to virtual dimension $14$, for $S$ an elliptic surface of type $E(3)$ up to virtual dimension $12$, for $S$ a double cover of $\PP^2$ branched along a smooth octic up to virtual dimension $6$. Conjecture \ref{conj1strong} also holds with the same dimension bounds for blow-ups of these surfaces in one point. 
\item Let $r\in\{-11,\ldots,3\}$. Then Conjecture \ref{conj2strong} holds for $S$ a $K3$ surface up to virtual dimension $12$, for $S$ the blow-up of a $K3$ surface in a point up to virtual dimension $12$, for $S$ an elliptic surface of type $E(3)$ up to virtual dimension $8$, and for $S$ a double cover of $\PP^2$ branched along a smooth octic up to virtual dimension $6$. Conjecture \ref{conj2strong} also holds with the same dimension bounds for blow-ups of these surfaces in one point. 
\item Conjecture \ref{conj3} holds for $s\in\{-3,\ldots,6\}$, $r\in\{-6,\ldots,3\}$ up to virtual dimension $12$.
\end{itemize}

We expect that Conjectures \ref{conj1} and \ref{conj2} hold for all ``virtual surfaces'' satisfying 
$$
2 - \chi + K^2 \geq 0.
$$ 
This inequality ensures that the first term in our conjectural formulae is integer. By this we mean one formally calculates the virtual Segre and Verlinde numbers using Theorems \ref{structhm1} and \ref{structhm2} for values of $K^2 ,\chi$ for which there exist no minimal general type surfaces $S$ satisfying $b_1(S) = 0$ and $p_g(S)>0$ with these values of $K^2 ,\chi$, but one nonetheless obtains the numbers given by the conjectures. \\

\noindent \textbf{Rank $\rho=4$}. 
\begin{itemize}
\item Let $s \in \{0,\ldots,8\}$. Then Conjecture \ref{conj1strong} holds for $S$ a $K3$ surface up to virtual dimension $6$ and for $S$ the blow-up of a $K3$ surface in a point up to virtual dimension $6$. The following can be seen as indirect evidence: Conjecture \ref{conj1} holds for $S$ a ``virtual'' surface with $K^2=-1$, $\chi=0$ up to virtual dimension $9$ and for $S$ a ``virtual'' surface with
$K^2=-1$, $\chi=1$ up to virtual dimension $8$. 
\item Let $S$ be a $K3$ surface, $c_1^2=8$, $\alpha = -[\O_S]$ (so $s = -1$), and $L=u=0$. Then Conjecture \ref{conj1strong} holds for $c_2=7$ (yielding virtual Segre number $\tfrac{15}{4}$).\footnote{This case ``probes'' the power $-\frac{(\rho-1)^2}{2\rho} s = \tfrac{9}{8}$ in $X_{s}(z)$.} 
\end{itemize}

The computations support a further conjecture on the dependence  of the universal power series in Conjecture \ref{conj1strong} and \ref{conj2strong} on $s$ and $r$.
\begin{conjecture}\label{polconj} 
\begin{enumerate}
\item For all $n \in \Z_{\geq 0}$, the coefficient of $z^{\frac{n}{2}}$ in the universal power series $V_s$, $W_s$, $X_s$, $Y_s$, $Y_{j,s}$, $Z_s$, $Z_{ij,s}$ $Q_s$, $R_s$, $S_s$, $S_{j,s}$, $T_s$ is a polynomial in $s$ of degree at most $n$.
\item For all $n \in \Z_{\geq 0}$, the coefficient of $w^{\frac{n}{2}}$ in the universal power series $F_r$, $G_r$,  $A_r$, $A_{j,r}$, $B_r$, $B_{ij,r}$
is a polynomial in $r$ of degree at most $n$.
\end{enumerate}
\end{conjecture}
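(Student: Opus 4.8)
The plan is to derive both claims from the universal-function descriptions of Theorems~\ref{structhm1} and~\ref{structhm2}, with a single geometric input supplying the degree bound: for any admissible $M$, the virtual Segre number $\int_{[M]^{\vir}}c(\alpha_M)\,e^{\mu(L)+\mu(\pt)u}$ is a polynomial in $s=\rk(\alpha)$ of degree at most $\vd(M)$, and $\chi^{\vir}(M,\mu(L)\otimes E^{\otimes r})$ is a polynomial in $r$ of degree at most $\vd(M)$. For the Segre case this holds because $\ch(\alpha_M)$ is affine-linear in $s$ --- it is $-\pi_{M*}$ of $\pi_S^*\ch(\alpha)$ against an $s$-independent class, and $\ch(\alpha)=(s,c_1(\alpha),\ch_2(\alpha))$ enters linearly --- so that, by the rank-free Newton identities writing $c_k$ as a weighted-degree-$k$ polynomial in $\ch_1,\dots,\ch_k$, each $c_k(\alpha_M)$ is a polynomial in $s$ of degree at most $k$. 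As $[M]^{\vir}$ has complex dimension $\vd(M)$ and $\mu(L),\mu(\pt)$ do not depend on $\alpha$, only $c_k(\alpha_M)$ with $k\le\vd(M)$ survive the integral. The Verlinde case is identical after the virtual Hirzebruch--Riemann--Roch rewriting $\chi^{\vir}=\int_{[M]^{\vir}}e^{c_1(\lambda(v))}\td(T_{M}^{\vir})$ used in the proof of Theorem~\ref{structhm2}: a short computation with $\mathcal L=L\otimes\det(c)^{-r/\rho}$ and the prescribed $c_2(v)$ shows that $\ch(v)$, hence $c_1(\lambda(v))$, is affine-linear in $r$, while $\td(T_{M}^{\vir})$ is $r$-independent.

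Since these invariants are precisely the coefficients of $z^{\vd(M)/2}$ and $w^{\vd(M)/2}$ of the generating series of Conjectures~\ref{conj1strong} and~\ref{conj2strong}, setting $n=\vd(M)$ rephrases the input as: the coefficient of $z^{n/2}$ of the Segre series, evaluated at the intersection numbers of any surface with $p_g>0$, is a polynomial of degree at most $n$ in $s$. To descend this to the individual universal series I would fix the moduli invariants $(\rho,c_1,c_2)$ --- so that $\vd(M)=n$ is fixed, since it depends neither on $\alpha$, $L$ nor $u$ --- and vary $\alpha$ and $L$ over configurations whose vectors $(c_2(\alpha),c_1(\alpha)^2,c_1(\alpha)K,c_1(\alpha)L,L^2,LK,u,\dots)$ are $\Q$-linearly independent, exactly as in Remark~\ref{rk1basis} and Section~\ref{sec:verif:higherrk}; here one uses that adding $[\O_S]$ or skyscrapers to $\alpha$ moves $s$ and $c_2(\alpha)$ independently. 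Because $V_s^{c_2(\alpha)}W_s^{c_1(\alpha)^2}\cdots$ is a product of pure powers, the coefficient of $z^{n/2}$ of its logarithm is linear in these intersection numbers, and polynomial interpolation --- a $\Q$-linear operation that preserves the \emph{uniform} degree bound, as every configuration shares the same $\vd=n$ --- isolates the $z^{n/2}$-coefficients of $\log V_s,\log W_s,\log Y_s,Q_s,R_s,S_s,T_s$ as polynomials of degree at most $n$ in $s$. Exponentiating (which preserves the bound since the $z^0$-coefficient is $s$-independent) gives the claim for the corresponding series, and the same scheme applied to Theorem~\ref{structhm2} and~\eqref{Verlindenum} handles $G_r,F_r,A_r,B_r$. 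The two series $X_s$ and $Z_s$ (exponents $\chi$ and $K^2$) additionally require comparing surfaces of different $(\chi,K^2)$ at a common virtual dimension, which the families of Section~\ref{sec:verif:higherrk} and their blow-ups provide.

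The genuine difficulty --- and the reason the statement is a conjecture --- lies with the series attached to the Seiberg--Witten sum: $Y_{j,s},Z_{jk,s},S_{j,s}$ and $A_{j,r},B_{jk,r}$. These do not enter the generating series as powers of freely variable intersection numbers: for minimal surfaces of general type, whose only basic classes are $0$ and $K$, every contributing $a_j$ is a multiple of $K$, so the subsets $J\subset[\rho-1]$ are separated only by the characters $(-1)^{|J|\chi}\eps_{\rho}^{\|J\|Kc_1}$. Isolating them forces a discrete Fourier extraction over $Kc_1\bmod\rho$, i.e.\ a $\C$-linear combination of virtual invariants for moduli spaces with \emph{different} $c_1$; but varying $c_1$ changes $c_1^2$, hence $\vd(M)$. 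The uniform degree bound that drove the previous step is thereby lost, and a naive recombination only yields a bound governed by the largest $\vd$ in play. The hard part will be to carry out this extraction while preserving the sharp bound --- either by exhibiting, for each $n$, families of surfaces with $p_g>0$, prescribed basic classes and common virtual dimension whose intersection data separate the $J$-strata, or by proving a term-by-term degree estimate directly for the $t$-dependent series $A_{\bullet,s},B_{\bullet,r}$ of Theorems~\ref{structhm1} and~\ref{structhm2}. The latter route is delicate because the equivariant Chern classes $c_k^T$ of the tautological complex on $S^{[\boldsymbol{n}]}$ feed into $\mathsf{Z}_S$ in arbitrarily high cohomological degree through the parameters $t_i$, so the clean ``cohomological degree bounds $s$-degree'' mechanism applies to the non-equivariant total invariant but not termwise inside~\eqref{defZ}, and one must moreover control the division by $\mathsf{Pert}$ and the operators $\mathrm{Coeff}_{t_1^0}\cdots\mathrm{Coeff}_{t_{\rho-1}^0}$.

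Finally, the bare polynomiality --- that every coefficient is a polynomial in $s$ (resp.\ $r$) at all, before any degree bound --- is the easier half and follows from the same reductions without the dimension count: the universality of~\eqref{ZA}, interpolation, the $\mathrm{Coeff}_{t_1^0}\cdots\mathrm{Coeff}_{t_{\rho-1}^0}$ operators, and the formation of products and logarithms all preserve polynomial dependence on $s$ and $r$. It is only the sharp bound of degree at most $n$, and specifically its passage to the Seiberg--Witten-sum series, that I expect to be the crux.
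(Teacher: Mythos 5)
First, a point of calibration: the paper contains no proof of this statement. Conjecture~\ref{polconj} is genuinely a conjecture there, and the only support the authors offer is computational --- for $\rho=2$ they evaluated the universal series of Theorems~\ref{structhm1} and~\ref{structhm2} by localization \emph{keeping $s$ and $r$ as formal variables} (modulo $q^{11}$, resp.\ $q^{16}$), which verifies polynomiality and the degree bound to that order, while for $\rho=3,4$ they run the conjecture in reverse, interpolating the series for all $s,r$ from finitely many integer values and citing the solvability of the resulting overdetermined linear systems as evidence. So your proposal cannot be matched against a paper proof; it stands or falls as an independent attack, and you yourself correctly concede at the end that it does not close.

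Judged on those terms, your geometric input is sound and is an idea the paper does not contain: since $\alpha\mapsto\alpha+m[\O_S]$ moves the rank without touching $c_1(\alpha),c_2(\alpha)$, the class $\ch(\alpha_M)$ is affine-linear in $s$, Newton's identities make $c_k(\alpha_M)$ polynomial in $s$ of degree at most $k$, and capping against $[M]^{\vir}$ truncates at $k\le\vd(M)$; your observation that $\ch_2(v)$ is affine in $r$ because the $\smfr{1}{2}\mathcal{L}^2$ terms cancel in $\smfr{1}{2}\mathcal{L}^2-c_2(v)$ is likewise correct, so both invariants are polynomials of degree at most $\vd(M)$ in $s$, resp.\ $r$. (Implicit but worth stating: everything is conditional on Conjectures~\ref{conj1strong} and~\ref{conj2strong}, since the series being bounded only exist if those hold.)

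The gaps lie in the descent step, and they are worse than your closing paragraph suggests. (a) Your isolation of $\log V_s,\log W_s,\log X_s,\dots$ by interpolation does not actually decouple from the Seiberg--Witten sum: the generating function is the \emph{product} of the pure-power part with the SW sum, whose logarithm is not linear in $(\chi,K^2,c_1(\alpha)K,LK,\dots)$, so the decoupling works only on surfaces with trivial SW sum --- essentially only $K3$ surfaces, all of which have $\chi=2$ and $K=0$. Hence already $X_s$ (exponent $\chi$) cannot be isolated without engaging the SW-indexed series, and your proposed ordering (pure powers first, SW series second) is not available. (b) The discrete Fourier extraction you describe separates $J$-strata only through $(\|J\|\bmod\rho,\,|J|\bmod 2)$; for $\rho=4$ the subsets $\varnothing$ and $\{1,3\}$, and likewise $\{2\}$ and $\{1,2,3\}$, carry identical characters $(-1)^{|J|\chi}\eps_4^{\|J\|Kc_1}$, in perfect agreement with the paper's own Galois relations $L_{\{1,3\},s}=\sigma(L_{\varnothing,s})$ in Section~\ref{sec:alg:rk4}. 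This means no amount of geometric data of the kind you vary can pin down the individual series --- only Galois-stable combinations --- so a proof along your lines must bound such combinations and then argue the summands separately, a step absent from your sketch. (c) A smaller inaccuracy: your stated reason for the loss of the degree bound, namely that varying $c_1$ changes $\vd(M)$, is not the precise obstruction, since one can often hold $\vd$ fixed by compensating with $c_2$ whenever $2\rho$ divides $(\rho-1)\Delta(c_1^2)$; the genuine difficulties are (a) and (b) together with the scarcity of admissible $(S,c_1)$ configurations with prescribed basic classes. Your diagnosis that the SW-sum series are the crux is correct and is consistent with the statement's conjectural status in the paper, but the route to them is blocked earlier than your proposal acknowledges.
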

It is easy to see that the formulae for $V_s$, $W_s$, $X_s$, $Q_s$, $R_s$, $T_s$, $F_r$, $G_r$ of Conjectures \ref{conj1strong} and \ref{conj2strong} satisfy Conjecture \ref{polconj}.
As mentioned above, for $\rho=2$, we have computed the universal power series of Theorems \ref{structhm1} and \ref{structhm2}, up a certain order in $q$, for \emph{arbitrary} $s$ and $r$. Using these to compute the universal power series of Conjectures \ref{conj1strong} and \ref{conj2strong}, up to certain orders in $z$ and $w$, verifies Conjecture \ref{polconj} in these cases. 
For $\rho=3$ and $4$ we can use Conjecture \ref{polconj} to determine the universal power series for \emph{all} $s$ and $r$  up to certain orders in $z$ and $w$ by interpolation.\footnote{As mentioned above, we have no direct data for the rank 4 virtual Verlinde series. However, using the virtual Segre-Verlinde correspondence (Conjecture \ref{conj3}), one can obtain such data indirectly from the rank 4 virtual Segre series.} The coefficients of the power series are determined as solutions of overdetermined systems of linear equations; the existence of solutions gives further support for the conjecture. 

\section{Algebraicity} \label{sec:alg}

\subsection{Rank 1}

In this section, we give several conjectural expressions for the remaining power series in Conjecture \ref{conj1} for $\rho=1$. We conjecture
\begin{align*}
S_{-1}(z)&=\tfrac{1}{2}((1+4t)-(1+2t)^{\frac{1}{2}}(1+6t))^{\frac{1}{2}}),\\
S_2(z)&=0, \\
S_3(z)&=\tfrac{1}{2t}((1+t)(1-2t)-(1-t)(1-4t^2)^{\frac{1}{2}}),\\
Y_3(z)&=(\smfr{1}{2}+\smfr{1}{2}(1-4t^2)^{\frac{1}{2}})^{\frac{1}{2}},\\
Z_3(z)&=\tfrac{1}{2t^3}((1-t)^2(t+2t)-(1-t^2)(1-4t^2)^{\frac{1}{2}}),
\end{align*}
where $z = t (1+(1-s)t)^{1-s}$ and, in each case, $s$ is specialized to the value in the subscript. Moreover,  $S_4(z)$ conjecturally satisfies the following quartic equation 
$$
x^4 -8(1-3t)x^3 + \smfr{1}{t}(1-3t)(1+16t-60t^2)x^2 - \smfr{4}{t}(1-3t)^2(1-12t^2)x -16t(1-3t)^3=0.
$$
We verified these formulae up to order 35 in $t$ using the method described in Section \ref{sec:verif:rk1}.

\subsection{Rank 2} \label{sec:alg:rk2}
\addtocontents{toc}{\protect\setcounter{tocdepth}{2}}

In this section, we give conjectural expressions for some of the remaining power series in Conjectures \ref{conj1} and \ref{conj2} for $\rho=2$. For $\rho=2$, we conjecture in addition that the universal power series satisfy the following relations:
\begin{align*}
Y_{\{1\},s}(z^{\frac{1}{2}})&=Y_{\varnothing,s}(-z^{\frac{1}{2}}), \quad \ \ Z_{\{1\},s}(z^{\frac{1}{2}})=Z_{\varnothing,s}(-z^{\frac{1}{2}}), \quad S_{\{1\},s}(z^{\frac{1}{2}})=S_{\varnothing,s}(-z^{\frac{1} {2}}), \\
A_{\{1\},r}(w^{\frac{1}{2}})&=A_{\varnothing,r}(-w^{\frac{1}{2}}), \quad B_{\{1\},r}(w^{\frac{1}{2}})=B_{\varnothing,r}(-w^{\frac{1}{2}}),
\end{align*}
for all $s,r \in \Z$. We can therefore focus on the power series $Y_{s}:=Y_{\varnothing,s}$, $Z_{s}:= Z_{\varnothing,s}$, $S_s:=S_{\varnothing,s}$, $A_{r}:=A_{\varnothing,r}$, and $B_r:=B_{\varnothing,r}$.

\subsection*{Segre series}
\addtocontents{toc}{\protect\setcounter{tocdepth}{1}}

\subsection*{$\boldsymbol{s=0}$} For $z = t (1+t)$, we conjecture
 \begin{align*}
Y_{0}&=\frac{((1+t)^{\frac{1}{2}}+t^{\frac{1}{2}})(1+t)^2}{(1+2t)^\frac{1}{2}},\quad 
Z_0=1,\quad
S_0=t^{\frac{1}{2}}(1+t)^{\frac{1}{2}}.
\end{align*}

\subsection*{$\boldsymbol{s=1}$} For $z = t (1+\tfrac{1}{2}t)^{\frac{1}{2}}$, we conjecture
\begin{align*}
Y_1=(1+t)+t^{\frac{1}{2}}(1+\smfr{3}{4}t)^{\frac{1}{2}},\quad
Z_1=\frac{1+\smfr{3}{4}t -\tfrac{1}{2} t^{\frac{1}{2}}(1+\smfr{3}{4}t)^{\frac{1}{2}}}{1+\smfr{1}{2}t}, \quad S_1=-\smfr{1}{2}t+t^{\frac{1}{2}}(1+\smfr{3}{4}t)^{\frac{1}{2}}.
\end{align*}

\subsection*{$\boldsymbol{s=2}$} For $z = t$, we conjecture
\begin{align*}
Y_2&=t^{\frac{1}{2}} +(1+t)^{\frac{1}{2}},\quad
Z_2=1+t-t^{\frac{1}{2}}(1+t)^{\frac{1}{2}},\quad
S_2=-t+ t^{\frac{1}{2}}(1+t)^{\frac{1}{2}}.
\end{align*}

\subsection*{$\boldsymbol{s=3}$} For $z = t (1-\tfrac{1}{2}t)^{-\frac{1}{2}}$, we conjecture
\begin{align*}
Y_{3}&=1+t^{\frac{1}{2}}(1-\smfr{1}{4}t)^{\frac{1}{2}},\\
Z_3&=\frac{(1+\smfr{1}{2}t)((1-\smfr{1}{4}t)(1+\smfr{1}{2}t)-\smfr{3}{2}t^{\frac{1}{2}}(1-\smfr{1}{4}t)^{\frac{1}{2}}(1-\smfr{1}{6}t))}{(1-\smfr{1}{2}t)^3},\\
S_3&=\frac{-\smfr{3}{2}t(1-\smfr{1}{6}t)+t^{\frac{1}{2}} (1-\smfr{1}{4}t)^{\frac{1}{2}}(1+\smfr{1}{2}t)}{1-\frac{1}{2}t}.
\end{align*}

\subsection*{$\boldsymbol{s=4}$} For $z = t (1-t)^{-1}$, we conjecture
\begin{align*}
Y_4&=(1-t)^{\frac{1}{2}}+t^{\frac{1}{2}},\quad
Z_4=\frac{1-2t^{\frac{1}{2}}(1-t)^{\frac{1}{2}})}{(1-2t)^2},\quad
S_4=\frac{-2t(1-t)+t^{\frac{1}{2}}(1-t)^{\frac{1}{2}}}{1-2t}.
\end{align*}

\subsection*{$\boldsymbol{s=5}$} Consider the unique solutions $x$, $y$ of
\begin{align*}
x^4&-2(1-t)x^3+(1-\smfr{3}{2}t)^2x^2-2t^2(1-t)x+t^4=0 \\
y^4&-2(1+\smfr{3}{4}t)y^3+(1-\smfr{1}{2}t)(1+\smfr{3}{4}t)y^2-t(1-\smfr{11}{2}t)(1+\smfr{3}{4}t)^2=0
\end{align*}
having leading terms $x = 1+t^{\frac{1}{2}}+O(t)$ and $y = 1+\frac{3}{2}t^{\frac{1}{2}}+O(t)$ respectively. Then conjecturally we have
$$
x = Y_5, \quad y=\frac{Z_5Y_5^4}{(1-\smfr{1}{2}t)^3},
$$
where $z = t (1-\tfrac{3}{2}t)^{-\frac{3}{2}}$.


\subsection*{$\boldsymbol{s=-1}$} Consider the unique solutions $x$, $y$ of
\begin{align*}
t^4x^4& -2t^2(1+2t)x^3 + (1+\smfr{3}{2}t)^2x^2 -2(1+2t)x+1=0 \\
y^4& -2(1+\smfr{15}{4}t) y^3 + (1+\smfr{5}{2}t)(1+\smfr{15}{4}t)y^2 -t(1-\smfr{5}{2}t)(1+\smfr{15}{4}t)^2=0
\end{align*}
having leading terms $x=1+t^{\frac{1}{2}}+O(t)$ and $y=1-\smfr{3}{2}t^{\frac{1}{2}}+O(t)$ respectively. Then conjecturally we have
$$
x =\frac{Y_{-1}}{(1+\smfr{3}{2}t)^{2}}, \quad y=\frac{(1+\smfr{3}{2}t)^3Z_{-1}}{Y_{-1}^{2}},
$$
where $z = t (1+\tfrac{3}{2}t)^{\frac{3}{2}}$.

\subsection*{Verlinde series}

\subsection*{$\boldsymbol{r=2}$} For $w = v$, we conjecture
\begin{align*}
A_{2}=\frac{1+v^{\frac{1}{2}}}{1+v},\quad
B_{2}=\frac{1+v}{(1+v^{\frac{1}{2}})^2}.
\end{align*}

\subsection*{$\boldsymbol{r=1}$} For $w=v(1+v)^{-\frac{3}{4}}$, we conjecture
\begin{align*}
A_{1}&=\frac{1+\smfr{1}{2}v+v^{\frac{1}{2}}(1+\smfr{1}{4}v)^{\frac{1}{2}}}{1+v},\\
B_{1}&=(1+v)((1+v)(1+\smfr{1}{4}v)-\smfr{3}{2}v^{\frac{1}{2}}(1+\smfr{1}{3}v)(1+\smfr{1}{4}v)^{\frac{1}{2}}).
\end{align*}

\subsection*{$\boldsymbol{r=0}$} For $w=v(1+v)^{-1}$, we conjecture
\begin{align*}
A_{0}=1+\frac{v^{\frac{1}{2}}}{(1+v)^{\frac{1}{2}}},\quad
B_{0}=1+v-v^{\frac{1}{2}}(1+v)^{\frac{1}{2}}.
\end{align*}

\subsection*{$\boldsymbol{r=-1}$} For $w=v(1+v)^{-\frac{3}{4}}$, we conjecture
\begin{align*}
A_{-1}=1+\smfr{1}{2}v+v^{\frac{1}{2}}(1+\smfr{1}{4}v)^{\frac{1}{2}},\quad
B_{-1}=1+\smfr{1}{4}v-\tfrac{1}{2} v^{\frac{1}{2}} (1+\smfr{1}{4}v)^{\frac{1}{2}}.
\end{align*}

\subsection*{$\boldsymbol{r=-2}$} For $w = v$, we conjecture
\begin{align*}
A_{-2}=\frac{1}{1-v^\frac{1}{2}},\quad
B_{-2}=1.
\end{align*}

\subsection*{$\boldsymbol{r=-3,3}$} Consider the unique solutions $x_1, x_2$ of 
$$x^4 -(2+v)x^3+x^2-v^2(2+v)x+v^4=0$$
having leading terms $x_1=1+v^{\frac{1}{2}}+O(v)$ and $x_2=v^2(1+v^{\frac{1}{2}})+O(v^3)$. In addition, consider the unique solutions $y_1, y_2$ of 
$$y^4-2(1+\smfr{9}{4}v)y^3+(1+v)(1+\smfr{9}{4}v)y^2-v(1-4v)(1+\smfr{9}{4}v)^2=0$$
having leading terms  $y_1=1 + \smfr{3}{2}v^{\frac{1}{2}}+O(v)$ and $y_2=1 - \smfr{3}{2}v^{\frac{1}{2}}+O(v)$. Then conjecturally we have
\begin{align*}
x_1&=(1+v)^2A_{3}, \quad\quad \ \, x_2=\frac{v^2}{1+v}A_{-3}, \\
y_1&=(1+v)^{5}B_{3}A_{3}^4, \quad y_2=(1+v)^2\frac{B_{-3}}{A_{-3}^2},
\end{align*}
where $w=v(1+v)^{\frac{5}{4}}$. 





Using Theorems \ref{structhm1}, \ref{structhm2}, and the method described in Section \ref{sec:verif:higherrk}, we verified that the conjectural formulae of this subsection produce the correct virtual Segre and Verlinde numbers for the following surfaces up to the following virtual dimensions (always for certain values of $H,c_1$ such that there are no strictly semistable sheaves): 
\begin{itemize}
\item elliptic surface of type $E(3)$, blow-up of a $K3$ surface, the blow-up of either of the previous two surfaces in one point, elliptic surface of type $E(4)$; all up to virtual dimension $18$, 
\item double cover of $\PP^2$ branched along a smooth octic, its blow-up in one point, elliptic surface of type $E(5)$, double cover of $\PP^1\times \PP^1$ branched along a smooth curve of bidegree $(6,6)$; all up to virtual dimension $16$,
\item smooth quintic in $\PP^3$ up to virtual dimension $13$.
\end{itemize}

\subsection{Rank 3}  \label{sec:alg:rk3}
\addtocontents{toc}{\protect\setcounter{tocdepth}{2}}

Based on experimentation, we conjecture that
\begin{align*}
Y_{\{1,2\},s}(z^{\frac{1}{2}})&=Y_{\varnothing,s}(-z^{\frac{1}{2}}), \quad Y_{\{2\},s}=\tau(Y_{\{1\},s}),
\end{align*}
where $\tau$ denotes complex conjugation of the coefficients, and the same with $Y_{J,s}$ replaced by the corresponding $Z_{J,s}$, $S_{J,s}$, $A_{J,r}$, and $B_{J,r}$.
Therefore, we sometimes restrict attention to 
$Y_{\varnothing,s}$, $Z_{\varnothing,s}$, $S_{\varnothing,s}$, $A_{\varnothing,r}$, $B_{\varnothing,r}$ and $Y_{\{1\},s}$, $Z_{\{1\},s}$, $S_{\{1\},s}$, $A_{\{1\},r}$, $B_{\{1\},r}$.

\subsection*{Segre series}
\addtocontents{toc}{\protect\setcounter{tocdepth}{1}}

\subsection*{$\boldsymbol{s=0}$} For $z = t(1+t)$, we conjecturally have
\begin{align*}
S_{\varnothing,0}&=-3^{\frac{1}{2}}t^{\frac{1}{2}}(1+t)^{\frac{1}{2}}, \quad S_{\{1\},0}=0,\quad Z_{\varnothing,0}=2,  \quad Z_{\{1\},0}=1,\\
Y_{\varnothing,0}&=\frac{(1+t)^3}{(1+2t)^{\frac{3}{2}}+3^{\frac{1}{2}}t^{\frac{1}{2}}(1+t)^{\frac{1}{2}}(1+2t)^{\frac{1}{2}}},\quad
Y_{\{1\},0}=\frac{(1+t)^3}{(1+2t)^{\frac{1}{2}}(1-\varepsilon_3t)},
\end{align*}
where we recall that $\varepsilon_3 = \exp(2 \pi i / 3)$ with $i:=\sqrt{-1}$.

For $s=1,2,4,5$ we found explicit power series
$s_{1},\ldots, s_{4}$, $y_{1},\ldots, y_{4}$, $z_{1},\ldots, z_{4}$, with $s_2=0$,
such that we conjecturally have
\begin{align*}
S_{\varnothing,s}&=\tfrac{1}{2}\big(s_{1}+s_{2}-3^{\frac{1}{2}}t^{\frac{1}{2}}\big(\smfr{1}{3t}(s_{3}+s_{4})\big)^{\frac{1}{2}}\big),\
S_{\{1\},s}=\tfrac{1}{2}\big(s_{1}-s_{2}+3^{\frac{1}{2}}i t\big(-\smfr{1}{3t^2}(s_{3}-s_{4})\big)^{\frac{1}{2}}\big),\\
Y_{\varnothing,s}&=\tfrac{1}{2}\big(y_{1}+y_{2}-3^{\frac{1}{2}}t^{\frac{1}{2}}\big(\smfr{1}{3t}(y_{3}+y_{4})\big)^{\frac{1}{2}}\big),\
Y_{\{1\},s}=\tfrac{1}{2}\big(y_{1}-y_{2}+3^{\frac{1}{2}}i t\big(-\smfr{1}{3t^2}(y_{3}-y_{4})\big)^{\frac{1}{2}}\big),\\
Z_{\varnothing,s}&=\tfrac{1}{2}\big(z_{1}+z_{2}+3^{\frac{1}{2}}t^{\frac{1}{2}}\big(\smfr{1}{3t}(z_{3}+z_{4})\big)^{\frac{1}{2}}\big),\
Z_{\{1\},s}=\tfrac{1}{2}\big(z_{1}-z_{2}-3^{\frac{1}{2}}i t\big(-\smfr{1}{3t^2}(z_{3}-z_{4})\big)^{\frac{1}{2}}\big).
\end{align*}
Here we choose roots as follows: we write the term inside the brackets of $(\bullet)^{\frac{1}{2}}$ as a power series in $t$ starting with $a^2$, with $a\in \Q_{>0}$, then $(\bullet)^{\frac{1}{2}}$ is
a power series in $t$ starting with $a$. Below, we list $s_{1},\ldots, s_{4}$, $y_{1},\ldots, y_{4}$, $z_{1},\ldots, z_{4}$ for $s=1,2$. 

\subsection*{$\boldsymbol{s=1}$} In this case $z$ and $t$ are related by $z = t(1+\tfrac{2}{3}t)^{\frac{2}{3}}$. Conjecturally, the power series
$S_{\varnothing,1}$, $S_{\{1,2\},1}$, $S_{\{1\},1}$, $S_{\{2\},1}$ are the four solutions of 
\begin{align*}
x^4 + 2tx^3 - (3t+ t^2)x^2 - (3t^2 + 2t^3)x -(t^3 + \smfr{2}{3}t^4)=0.
\end{align*}
Conjecturally, $Y_{\varnothing,1}$, $Y_{\{1,2\},1}$, $Y_{\{1\},1}$, $Y_{\{2\},1}$ are the four solutions of 
\begin{align*}
x^4-(4+\smfr{17}{3}t)(1+\smfr{2}{3}t)^{\frac{1}{2}}x^3+(6 + 18t + 16t^2 + \smfr{31}{9}t^3)x^2-(4+\smfr{17}{3}t)(1+\smfr{2}{3}t)^{\frac{7}{2}}x+(1+\smfr{2}{3}t)^6=0.
\end{align*}
Conjecturally,  $Z_{\varnothing,1}$, $Z_{\{1,2\},1}$, $Z_{\{1\},1}$, $Z_{\{2\},1}$ are the four solutions of 
\begin{align*}
x^4-6\frac{1+\smfr{10}{9}t}{1+\smfr{2}{3}t}x^3+\frac{(13+\smfr{58}{3}t+\smfr{55}{9}t^2)(1+\smfr{10}{9}t)}{(1+\smfr{2}{3}t)^3}x^2+\frac{(4+\smfr{5}{3}t)(1+\smfr{10}{9}t)^2}{(1+\smfr{2}{3}t)^3}(-3x+1)=0.
\end{align*}
Explicitly, using the notation introduced above, this can be written as
\begin{align*}
s_1&=-t, \quad s_3=6t(1+\smfr{5}{6}t), \quad s_4=6t(1+\smfr{2}{3}t)^{\frac{1}{2}}(1+\smfr{10}{9}t)^{\frac{1}{2}},\\
y_1&=(2+\smfr{17}{6}t)(1+\smfr{2}{3}t)^{\frac{1}{2}},\quad
y_2=\smfr{3}{2}t(1+\smfr{10}{9}t)^{\frac{1}{2}},\\
y_3&=6t+\smfr{25}{2}t^2+\smfr{20}{3}t^3,\quad
y_4=(6t+\smfr{17}{2}t^2)(1+\smfr{2}{3}t)^{\frac{1}{2}}(1+\smfr{10}{9}t)^{\frac{1}{2}}, \\
z_1&=\frac{3+\smfr{10}{3}t}{1+\smfr{2}{3}t}, \quad
z_2=\frac{(1+\smfr{5}{3}t)(1+\smfr{10}{9}t)^{\frac{1}{2}}}{(1+\smfr{2}{3}t)^{\frac{3}{2}}},\quad
z_3=\frac{6t+\smfr{35}{3}t^2+\smfr{50}{9}t^3}{(1+\smfr{2}{3}t)^3},\quad 
z_4=\frac{6t(1+\smfr{10}{9}t)^{\frac{3}{2}}}{(1+\smfr{2}{3}t)^{\frac{5}{2}}}.
\end{align*}


\begin{remark}
We briefly sketch the method we use to find these power series, and those for $s=2,4,5$ below. The same method is used to find the power series for the virtual Verlinde numbers for $r=-2,-1,1,2$ below. 
Let $L_{J,s}$ be any of the power series $Y_{J,s}$, $Z_{J,s}$, $S_{J,s}$. 
Then we expect that the four power series
\begin{align*}
&L_{\varnothing,s}+L_{\{1,2\},s}+L_{\{1\},s}+L_{\{2\},s},\ (L_{\varnothing,s}+L_{\{1,2\},s})(L_{\{1\},s}+L_{\{2\},s}),\\
&L_{\varnothing,s}L_{\{1,2\},s}+L_{\{1\},s}L_{\{2\},s},\ L_{\varnothing,s}L_{\{1,2\},s}L_{\{1\},s}L_{\{2\},s}
\end{align*}
 are simple algebraic functions, for which we can guess a formula from their coefficients modulo $t^7$. Moreover, by Conjectures \ref{conj1strong} and \ref{conj2strong} (and the discussion at the end of Section \ref{sec:generalconj}), we also have 
 $$
 Y_{\varnothing,s}Y_{\{1,2\},s}=Y_{\{1\},s}Y_{\{2\},s}, \quad S_{\varnothing,s}+S_{\{1,2\},s}=S_{\{1\},s}+S_{\{2\},s}.
 $$ 
Then the explicit expressions for $L_{\varnothing,s}$, $L_{\{1,2\},s}$, $L_{\{1\},s}$, $L_{\{2\},s}$ are found from those of the above four series by double extraction of square roots. The algebraic equations for $s=1,2,4,5$ and $r=-2,-1,1,2$ are then just obtained as the product 
$$(x-L_{\varnothing,s})(x-L_{\{1,2\},s})(x-L_{\{1\},s})(x-L_{\{2\},s}).$$
Indeed we find modulo $t^7$ \begin{align*}
S_{\varnothing,1}+S_{\{1,2\},1}+S_{\{1\},1}+S_{\{2\},1}&=-2t, \quad (S_{\varnothing,1}+S_{\{1,2\},1})(S_{\{1\},1}+S_{\{2\},1})=t^2,\\
S_{\varnothing,1}S_{\{1,2\},1}+S_{\{1\},1}S_{\{2\},1}&=-3t(1+\smfr{2}{3}t),\quad S_{\varnothing,1}S_{\{1,2\},1}S_{\{1\},1}S_{\{2\},1}=-t^3(1+\smfr{2}{3}t), \\
Y_{\varnothing,1}+Y_{\{1,2\},1}+Y_{\{1\},1}+Y_{\{2\},1}&=(4+\smfr{17}{3}t)(1+\smfr{2}{3}t)^{\frac{1}{2}},\\(Y_{\varnothing,1}+Y_{\{1,2\},1})(Y_{\{1\},1}+Y_{\{2\},1})&=4+14t + \smfr{40}{3}t^2 + \smfr{77}{27}t^3,\\
Y_{\varnothing,1}Y_{\{1,2\},1}=Y_{\{1\},1}Y_{\{2\},1}&=(1+\smfr{2}{3}t)^3, \\
Z_{\varnothing,1}+Z_{\{1,2\},1}+Z_{\{1\},1}+Z_{\{2\},1}&=\frac{6+\smfr{20}{3}t}{1+\smfr{2}{3}t}, \\
(Z_{\varnothing,1}+Z_{\{1,2\},1})(Z_{\{1\},1}+Z_{\{2\},1})&=\frac{(2+\smfr{7}{3}t)(4+\smfr{5}{3}t)(1+\smfr{10}{9}t)}{(1+\smfr{2}{3}t)^3},\\
Z_{\varnothing,1}Z_{\{1,2\},1}+Z_{\{1\},1}Z_{\{2\},1}&=\frac{5(1+\smfr{10}{9}t)}{1+\smfr{2}{3}t},\quad
Z_{\varnothing,1}Z_{\{1,2\},1}Z_{\{1\},1}Z_{\{2\},1}=\frac{(4+\smfr{5}{3}t)(1+\smfr{10}{9}t)^2}{(1+\smfr{2}{3}t)^3}.
\end{align*}
\end{remark}

\subsection*{$\boldsymbol{s=2}$} In this case $z$ and $t$ are related by $z = t(1+\tfrac{1}{3}t)^{\frac{1}{3}}$. Conjecturally, the power series
$S_{\varnothing,2}$, $S_{\{1,2\},2}$, $S_{\{1\},2}$, $S_{\{2\},2}$ are the four solutions of 
\begin{align*}
x^4 + 4tx^3 + (-3t+ 4t^2)x^2 - 6t^2x -4t^3 - \smfr{4}{3}t^4=0.
\end{align*}
Conjecturally, $Y_{\varnothing,2}$, $Y_{\{1,2\},2}$, $Y_{\{1\},2}$, $Y_{\{2\},2}$ are the four solutions of 
\begin{align*}
x^4-(4+\smfr{7}{3}t)(1+\smfr{4}{3}t)^{\frac{1}{2}}x^3+(6+8t)(1+\smfr{1}{3}t)^2 x^2-(4+\smfr{7}{3}t)(1+\smfr{4}{3}t)^{\frac{1}{2}}(1+\smfr{1}{3}t)^3x+(1+\smfr{1}{3}t)^6=0.
\end{align*}
Conjecturally,  $Z_{\varnothing,2}$, $Z_{\{1,2\},2}$, $Z_{\{1\},2}$, $Z_{\{2\},2}$ are the four solutions of 
\begin{align*}
x^4-\frac{(6+5t)(1+\smfr{4}{3}t)(1+\smfr{4}{9}t)}{(1+\smfr{1}{3}t)^3}x^3+\frac{13(1+\smfr{4}{3}t)^2(1+\smfr{4}{9}t)}{(1+\smfr{1}{3}t)^3}x^2+\frac{4(1+\smfr{4}{3}t)^2(1+\smfr{4}{9}t)^2}{(1+\smfr{1}{3}t)^4}(-3x+1)=0.
\end{align*}
Explicitly, using the notation introduced above, this can be written as
\begin{align*}
s_1&=-2t, \quad s_3=6t+4t^2, \quad s_4=6t(1+\smfr{4}{3}t)^{\frac{1}{2}}(1+\smfr{4}{9}t)^{\frac{1}{2}},\\
y_1&=(2+\smfr{7}{6}t)(1+\smfr{4}{3}t)^{\frac{1}{2}},\quad
y_2=\smfr{3}{2}t(1+\smfr{4}{9}t)^{\frac{1}{2}},\\
y_3&=6t+\smfr{17}{2}t^2+\smfr{8}{3}t^3,\quad
y_4=(6t+\smfr{7}{2}t^2)(1+\smfr{4}{3}t)^{\frac{1}{2}}(1+\smfr{4}{9}t)^{\frac{1}{2}},\\
z_1&=\frac{3(1+\smfr{5}{6}t)(1+\smfr{4}{9}t)(1+\smfr{4}{3}t)}{(1+\smfr{1}{3}t)^3}, \\
z_2&=\frac{(1+\smfr{25}{6}t+\smfr{16}{9}t^2)(1+\smfr{4}{3}t)^{\frac{1}{2}}(1+\smfr{4}{9}t)^{\frac{1}{2}}}{(1+\smfr{1}{3}t)^3},\\
z_3&=\frac{(24t + \smfr{101}{2}t^2 + \smfr{98}{3}t^3 + \smfr{20}{3}t^4)(1+\smfr{4}{3}t)(1+\smfr{4}{9}t)}{(1+\smfr{1}{3}t)^6},\\
z_4&=\frac{(24t+  \smfr{59}{2}t^2 + \smfr{26}{3}t^3)(1+\smfr{4}{3}t)^{\frac{3}{2}}(1+\smfr{4}{9}t)^{\frac{3}{2}}}{(1+\smfr{1}{3}t)^6}.
\end{align*}

\subsection*{$\boldsymbol{s=3}$} For $z = t$, we conjecturally have
\begin{align*}
S_{\varnothing,3}&=-\smfr{3}{2}t-3^{\frac{1}{2}}t^\frac{1}{2}(1+\smfr{3}{4}t)^{\frac{1}{2}},\quad 
S_{\{1\},3}=-\tfrac{1}{2}(3+3^{\frac{1}{2}}i)t,\\
Y_{\varnothing,3}&=1+\smfr{3}{2}t-3^{\frac{1}{2}}t^\frac{1}{2}(1+\smfr{3}{4}t)^{\frac{1}{2}},\quad 
Y_{\{1\},3}=1, \\
Z_{\varnothing,3}&=\frac{2(1+t)(1+\frac{3}{4}t)^{\frac{1}{2}}}{(1+3t)(1+\frac{3}{4}t)^{\frac{1}{2}}-\frac{3}{2}3^{\frac{1}{2}}t^{\frac{1}{2}}(1+t)},\quad
Z_{\{1\},3}=1+t.
\end{align*}

\subsection*{$\boldsymbol{s=4}$} In this case $z$ and $t$ are related by $z = t(1-\tfrac{1}{3}t)^{-\frac{1}{3}}$. Conjecturally, the power series
$S_{\varnothing,4}$, $S_{\{1,2\},4}$, $S_{\{1\},4}$, $S_{\{2\},4}$ are the four solutions of 
\begin{align*}
x^4 + 2tx^3 - (3t + t^2)x^2 - (3t^2 + 2t^3)x - (t^3 + \smfr{2}{3}t^4)=0.
\end{align*}
Conjecturally, $Y_{\varnothing,4}$, $Y_{\{1,2\},4}$, $Y_{\{1\},4}$, $Y_{\{2\},4}$ are the four solutions of 
\begin{align*}
x^4-(4-\smfr{1}{3}t)(1+\smfr{2}{3}t)^{\frac{1}{2}}x^3+(6+4t)(1-\smfr{1}{3}t)^2x^2-(4-\smfr{1}{3}t)(1-\smfr{1}{3}t)^3(1+\smfr{2}{3}t)^{\frac{1}{2}}x+(1-\smfr{1}{3}t)^6=0.
\end{align*}
Conjecturally,  $Z_{\varnothing,4}$, $Z_{\{1,2\},4}$, $Z_{\{1\},4}$, $Z_{\{2\},4}$ are the four solutions of 
\begin{align*}
x^4&-\frac{(6 + 14t - \smfr{7}{3}t^2)(1-\smfr{2}{9}t)(1+\smfr{2}{3}t)^3}{(1-\smfr{1}{3}t)^6}x^3+\frac{(13 +36t + \smfr{23}{3}t^2 - \smfr{298}{27}t^3)(1-\smfr{2}{9}t)(1+\smfr{2}{3}t)^4}{(1-\smfr{1}{3}t)^8}x^2\\&+\frac{4(1-\smfr{2}{9}t)^2(1+\smfr{2}{3}t)^8}{(1-\smfr{1}{3}t)^{10}}(-3x+1)=0.
\end{align*}

\subsection*{$\boldsymbol{s=5}$} In this case $z$ and $t$ are related by $z = t(1-\tfrac{2}{3}t)^{-\frac{2}{3}}$. Conjecturally, the power series
$S_{\varnothing,5}$, $S_{\{1,2\},5}$, $S_{\{1\},5}$, $S_{\{2\},5}$ are the four solutions of 
\begin{align*}
x^4 &+\frac{2t(5 - \smfr{20}{3}t + \smfr{11}{9}t^2)}{(1-\smfr{2}{3}t)^2}x^3+  \frac{-3t + 45t^2 - \smfr{305}{3}t^3 + \smfr{740}{9}t^4 - \smfr{220}{9}t^5 + \smfr{193}{81}t^6}{(1-\smfr{2}{3}t)^4}x^2\\&-\frac{3t^2(5 - \smfr{20}{3}t + \smfr{11}{9}t^2)(1 - \smfr{14}{3}t + t^2)}{(1-\smfr{2}{3}t)^3}x-\frac{t^3(5 - \smfr{20}{3}t + \smfr{11}{9}t^2)^2}{(1-\smfr{2}{3}t)^3}=0.
\end{align*}
Conjecturally, $Y_{\varnothing,5}$, $Y_{\{1,2\},5}$, $Y_{\{1\},5}$, $Y_{\{2\},5}$ are the four solutions of 
\begin{align*}
x^4-(4+\smfr{1}{3}t)(1-\smfr{2}{3}t)^{\frac{1}{2}}x^3+(6-6t-\smfr{1}{9}t^3)x^2-(4+\smfr{1}{3}t)(1-\smfr{2}{3}t)^{\frac{7}{2}}x+(1-\smfr{2}{3}t)^6=0.
\end{align*}
Conjecturally,  $Z_{\varnothing,5}$, $Z_{\{1,2\},5}$, $Z_{\{1\},5}$, $Z_{\{2\},5}$ are the four solutions of 
\begin{align*}
x^4&-\frac{(6 + 22t - \smfr{7}{3}t^2)(1-\smfr{2}{9}t)(1+\smfr{1}{3}t)^4}{(1-\smfr{2}{3}t)^7}x^3\\&+\frac{(13 + \smfr{14}{3}t - \smfr{851}{9}t^2 + \smfr{1943}{27}t^3 + \smfr{5455}{81}t^4 - \smfr{19009}{243}t^5 +\smfr{11623}{729}t^6 - \smfr{202}{2187}t^7 - \smfr{689}{6561}t^8)(1-\smfr{2}{9}t)(1+\smfr{1}{3}t)^6}{(1-\smfr{2}{3}t)^{15}}x^2\\&+\frac{4(1-\smfr{2}{9}t)^2(1+\smfr{1}{3}t)^{12}(1-\smfr{11}{12}t)}{(1-\smfr{2}{3}t)^{15}}(-3x+1)=0.
\end{align*}

\subsection*{$\boldsymbol{s=6}$} For $z = t(1-t)^{-1}$, we conjecturally have
\begin{align*}
S_{\varnothing,6}&=\frac{1-2t}{1-3^{-\frac{1}{2}}t^{-\frac{1}{2}}(1-t)^{-\frac{1}{2}}},\quad
S_{\{1\},6}=\frac{4t(1-t)}{2t-1-3^{-\frac{1}{2}}i},\\
Y_{\varnothing,6}&=1-3^{\frac{1}{2}}t^{\frac{1}{2}}(1-t)^{\frac{1}{2}},\quad 
Y_{\{1\},6}=1-\smfr{1}{2}(3+3^{\frac{1}{2}}i)t,\\
Z_{\varnothing,6}&=\frac{2}{\big(1-3^{\frac{1}{2}}t^{\frac{1}{2}}(1-t)^{\frac{1}{2}}\big)^3},\quad
Z_{\{1\},6}=\frac{1}{\big(1-\smfr{1}{2}(3+3^{\frac{1}{2}}i)t\big)^{3}}.
\end{align*}

\subsection*{Verlinde series} 

\subsection*{$\boldsymbol{r=0}$} For $w = v(1+v)^{-1}$, we conjecturally have
\begin{align*}
A_{\varnothing,0}&=\frac{(1+\smfr{3}{2}v)-3^{\frac{1}{2}}v^{\frac{1}{2}}(1+\smfr{3}{4}v)^{\frac{1}{2}}}{1+v},\quad
A_{\{1\},0}=(1+v)^{-1},\\
B_{\varnothing,0}&=2(1+v)(1+3v)(1+\smfr{3}{4}v)+3^{\frac{3}{2}}v^{\frac{1}{2}}(1+v)^2(1+\smfr{3}{4}v)^{\frac{1}{2}},\quad
B_{\{1\},0}=(1+v).
\end{align*}

\subsection*{$\boldsymbol{r=-1,1}$} In this case $w$ and $v$ are related by $w = v(1+v)^{-\frac{8}{9}}$. 
Conjecturally, $A_{\varnothing,-1}$, $A_{\{1,2\},-1}$, $A_{\{1\},-1}$, $A_{\{2\},-1}$ are the four solutions of 
\begin{align*}
x^4 -(4+v)x^3 + 6x^2 -\frac{4+v}{1+v}x + \frac{1}{(1+v)^2}=0.
\end{align*}
Conjecturally,  $B_{\varnothing,-1}$, $B_{\{1,2\},-1}$, $B_{\{1\},-1}$, $B_{\{2\},-1}$ are the four solutions of
\begin{align*}
x^4 - 6(1+v)(1+\smfr{1}{2}v)(1+\smfr{1}{9}v)x^3 + 13(1+v)^2(1+\smfr{1}{9}v)x^2 + (1+v)^2(1+\smfr{1}{9}v)^2(-12x+4)=0.
\end{align*}

\noindent Similarly, $A_{\varnothing,1}$, $A_{\{1,2\},1}$, $A_{\{1\},1}$, $A_{\{2\},1}$ are the four solutions of 
\begin{align*}
x^4-\frac{4+v}{1+v}x^3+\frac{6}{(1+v)^2}x^2-\frac{4+v}{(1+v)^4}x+\frac{1}{(1+v)^6}=0
\end{align*}
and $B_{\varnothing,1}$, $B_{\{1,2\},1}$, $B_{\{1\},1}$, $B_{\{2\},1}$ are the four solutions of
\begin{align*}
x^4&-(6+18v+3v^2)(1+v)^3(1+\smfr{1}{9}v)x^3+(13 + 49v + 36v^2 - 4v^3 )(1+v)^4(1+\smfr{1}{9}v)x^2\\&+(1+v)^8(1+\smfr{1}{9}v)^2(-12x+4)=0.
\end{align*}

\subsection*{$\boldsymbol{r=-2,2}$} In this case $w$ and $v$ are related by $w = v(1+v)^{-\frac{5}{9}}$. 
Conjecturally, $A_{\varnothing,-2}$, $A_{\{1,2\},-2}$, $A_{\{1\},-2}$, $A_{\{2\},-2}$ are the four solutions of 
\begin{align*}
x^4 -(4 + 3v)x^3 + (6 + 6v -v^3)x^2 - (4 + 3v)x +1=0.
\end{align*}
Conjecturally,  $B_{\varnothing,-2}$, $B_{\{1,2\},-2}$, $B_{\{1\},-2}$, $B_{\{2\},-2}$ are the four solutions of 
\begin{align*}
x^4 - 6(1 + \smfr{4}{9}v)x^3 + (13 +2v-v^2)(1 + \smfr{4}{9}v)x^2 +(1-\smfr{1}{4}v)(1 + \smfr{4}{9}v)^2(-12x+4)=0.
\end{align*}

\noindent Conjecturally, $A_{\varnothing,2}$, $A_{\{1,2\},2}$, $A_{\{1\},2}$, $A_{\{2\},2}$ are the four solutions of 
\begin{align*}
x^4 -\frac{4 + 3v}{(1+v)^2}x^3 + \frac{6 + 6v -v^3}{(1+v)^4}x^2 - \frac{4 + 3v}{(1+v)^6}x +\frac{1}{(1+v)^8}=0.
\end{align*}
Conjecturally,  $B_{\varnothing,2}$, $B_{\{1,2\},2}$, $B_{\{1\},2}$, $B_{\{2\},2}$ are the four solutions of 
\begin{align*}
x^4&-6(1+v)^4(1+\smfr{4}{9}v)(1 + 5v + \smfr{5}{2}v^2)x^3+(1+v)^6(1+\smfr{4}{9}v)(13 + 74v + 89v^2 - 47v^3 - 95v^4 \\&- 11v^5 + 17v^6 + 2v^7 - v^8)x^2+(1-\smfr{1}{4}v)(1+v)^{12}(1+\smfr{4}{9}v)^2(-12x+4)=0.
\end{align*}

\subsection*{$\boldsymbol{r=-3,3}$} In this case $w=v$. Recall $\varepsilon_3 := \exp(2 \pi i /3) = -\smfr{1}{2}+\smfr{3^{\frac{1}{2}} i}{2}$. 
 Then we conjecturally have
\begin{align*}
A_{\varnothing,-3}&=\frac{1}{1+3^{\frac{1}{2}}v^{\frac{1}{2}}+v},\quad 
A_{\{1\},-3}=\frac{1}{1+\varepsilon_3^{-1}v},\quad 
B_{\varnothing,-3}=2,\quad B_{{\{1\}},-3}=1,\\
A_{\varnothing,3}&=\frac{1-3^{\frac{1}{2}}v^{\frac{1}{2}}+v}{(1+v)^2},\
A_{\{1\},3}=\frac{1+\varepsilon_3^{-1} v}{(1+v)^2},\
B_{\varnothing,3}=\frac{2(1+v)^3}{(1-3^{\frac{1}{2}}v^{\frac{1}{2}}+v)^3},\ 
B_{\{1\},3}=\frac{(1+v)^3}{(1+\varepsilon_3^{-1}v)^3}.
\end{align*}

Using $A_{\varnothing,r}A_{\{1,2\},r}=A_{\{1\},r}A_{\{2\},r}$, these expressions are related by 
the two different factorizations
$$\frac{1+v^3}{1+v}=(1+\varepsilon_3 v)(1+\varepsilon_3^{-1}v)=(1+3^{\frac{1}{2}}v^{\frac{1}{2}}+v)(1-3^{\frac{1}{2}}v^{\frac{1}{2}}+v).$$

Using Theorems \ref{structhm1}, \ref{structhm2}, and the method described in Section \ref{sec:verif:higherrk}, we verified that the conjectural formulae of this subsection produce the correct virtual Segre and Verlinde numbers for the following surfaces up to the following virtual dimensions (always for certain values of $H,c_1$ such that there are no strictly semistable sheaves): 
\begin{itemize}
\item \textbf{Segre case.} For $S$ a $K3$ surface up to virtual dimension $14$, for $S$ the blow-up of a $K3$ surface up to virtual dimension $14$, for $S$ an elliptic surface of type $E(3)$ up to virtual dimension $12$, for $S$ a double cover of $\PP^2$ branched along a smooth octic up to virtual dimension $6$, and for the blow-ups of these surfaces in one point with the same dimension bounds.
\item \textbf{Verlinde case.} For $S$ a $K3$ up to virtual dimension $12$, for $S$ the blow-up of a $K3$ surface up to virtual dimension $12$, for $S$ an elliptic surface of type $E(3)$ up to virtual dimension $8$, for $S$ a double cover of $\PP^2$ branched along a smooth octic up to virtual dimension $6$, for blow-ups of these surfaces in one point with the same dimension bounds 
\end{itemize}

\subsection{Rank 4} \label{sec:alg:rk4}
\addtocontents{toc}{\protect\setcounter{tocdepth}{2}}

We also have some partial results in the case of rank $\rho=4$.  We computed the universal power series modulo $t^7$ for the virtual Segre series. Recall that for $\rho=4$, we have no direct data for the rank 4 virtual Verlinde series, so we \emph{assume} the virtual Segre-Verlinde correspondence (Conjecture \ref{conj3}) in order to obtain such data from the rank 4 virtual Segre series. This determines the virtual Verlinde series modulo $v^5$. 

Let $L_{J,s}$ be any of the power series $Y_{J,s}$, $Z_{J,s}$, or $S_{J,s}$. Based on experimentation, we conjecture the following: 
\begin{align*}
L_{J,s} &\in \Q(i,2^{\frac{1}{2}})[[z^{\frac{1}{2}}]], \quad \forall J \subset [3], s \in \Z \\
L_{\{1,2,3\},s}(z^{\frac{1}{2}})&=L_{\varnothing,s}(-z^{\frac{1}{2}}),\quad  L_{\{1,3\},s}=\sigma(L_{\varnothing,s}), \quad L_{\{2\},s}(z^{\frac{1}{2}})=\sigma(L_{\varnothing,s})(-z^{\frac{1}{2}}),\\
L_{\{1,2\},s}(z^{\frac{1}{2}})&=L_{\{1\},s}(-z^{\frac{1}{2}}),\quad  L_{\{3\},s}=\tau(L_{\{1\},s}), \quad  L_{\{2,3\},s}(z^{\frac{1}{2}})=\tau(L_{\{1\},s})(-z^{\frac{1}{2}}).
\end{align*}
Here $\sigma$ is the involution of  $\Q(i,2^{\frac{1}{2}})[[z^{\frac{1}{2}}]]$, that replaces $2^{\frac{1}{2}}$ by $-2^{\frac{1}{2}}$ in the coefficients of the power series, and $\tau$ is complex conjugation of the coefficients. We expect the analogs of these statements to hold on the Verlinde side as well.
Therefore, in what follows, we restrict attention to $Y_{\varnothing,s}$, $Z_{\varnothing,s}$, $S_{\varnothing,s}$, $A_{\varnothing,r}$, $B_{\varnothing,r}$ and $Y_{\{1\},s}$, $Z_{\{1\},s}$, $S_{\{1\},s}$, $A_{\{1\},r}$, $B_{\{1\},r}$.

\subsection*{Segre series}
\addtocontents{toc}{\protect\setcounter{tocdepth}{1}}


\subsubsection*{$\boldsymbol{s=0}$} 

For $z=t(1+t)$, we conjecturally have 
\begin{align*}
S_{\varnothing,0}&=(1+2^{\frac{1}{2}})t^{\frac{1}{2}}(1+t)^{\frac{1}{2}},\quad S_{\{1\},0}=t^{\frac{1}{2}}(1+t)^{\frac{1}{2}},\quad Z_{\varnothing,0}=2(2+2^{\frac{1}{2}}),\quad Z_{\{1\},0}=2,\\
Y_{\varnothing,0}&=\frac{(1+t)^4((1+t)^{\frac{1}{2}}+t^{\frac{1}{2}})}{(1+2t)^{\frac{1}{2}}\big((1+2t)-2^{\frac{1}{2}}t^{\frac{1}{2}}(1+t)^{\frac{1}{2}}\big)},\quad
Y_{\{1\},0}=\frac{(1+t)^4((1+t)^{\frac{1}{2}}+t^{\frac{1}{2}})}{(1+2t)^{\frac{1}{2}}(1+(1-i)t)}.
\end{align*}

\subsubsection*{$\boldsymbol{s=4}$} 
We write 
$$z_1=((1+2t+2t^2)+2t^{\frac{3}{2}}(1+t)^{\frac{1}{2}})(1+\smfr{1}{2}t)^{\frac{1}{2}},\quad z_2=-\smfr{1}{2}(1-2t-2t^2)(1+t)^{\frac{1}{2}}+t^{\frac{1}{2}}(1+t)^2.$$
For $z=t$, we conjecture the following formulae to hold
\begin{align*}
S_{\varnothing,4}&=-2t+t^{\frac{1}{2}}\big((1+t)^{\frac{1}{2}}+2^{\frac{1}{2}}(1+\smfr{1}{2}t)^{\frac{1}{2}}\big),\quad
S_{\{1\},4}=(-2-i)t+t^{\frac{1}{2}}(1+t)^{\frac{1}{2}}, \\
Z_{\varnothing,4}&=\frac{2(1+t)^2(1+\smfr{1}{2}t)^{\frac{1}{2}}}{z_1+2^{\frac{1}{2}}z_2},\quad
Z_{\{1\},4}=2(1+t)^2\big((1+2t)-2t^{\frac{1}{2}}(1+t)^{\frac{1}{2}}\big),\\
Y_{\varnothing,4}&=((1+t)^{\frac{1}{2}}+t^{\frac{1}{2}})((1+t)+2^{\frac{1}{2}} t^{\frac{1}{2}} (1+\smfr{1}{2}t)^{\frac{1}{2}}),\quad
Y_{\{1\},4}=(1+t)^{\frac{1}{2}}+t^{\frac{1}{2}}.
\end{align*}



\subsection*{Verlinde series} 
\subsubsection*{$\boldsymbol{r=-4,4}$} 
In this case $w=v$. Conjecturally, we have the following attractive formulae
\begin{align*}
A_{\varnothing,-4}&=\frac{1}{(1-v^{\frac{1}{2}})(1-2^{\frac{1}{2}}v^{\frac{1}{2}}+v)},\quad 
A_{\{1\},-4}=\frac{1}{(1-v^{\frac{1}{2}})(1-i v)},\\
B_{\varnothing,-4}&=2(2+2^{\frac{1}{2}}),\quad B_{\{1\},-4}=2,\\
A_{\varnothing,4}&=\frac{(1+v^{\frac{1}{2}})(1+2^{\frac{1}{2}}v^{\frac{1}{2}}+v)}{(1+v)^3},\quad 
A_{\{1\},4}=\frac{(1+v^{\frac{1}{2}})(1-i v)}{(1+v)^3},\\
B_{\varnothing,4}&=2(2+2^{\frac{1}{2}})\frac{(1+v)^6}{(1+v^{\frac{1}{2}})^3(1+2^{\frac{1}{2}}v^{\frac{1}{2}}+v)^3},\quad 
B_{\{1\},4}=2\frac{(1+v)^6}{(1+v^{\frac{1}{2}})^3(1-i v)^3}.
\end{align*}
Using $A_{\varnothing,r}A_{\{1,2,3\},r}=A_{\{1\},r}A_{\{2,3\},r}$, these expressions are related by 
the two different factorizations
$$
\frac{1-v^4}{1+v}=(1-v^{\frac{1}{2}})(1+v^{\frac{1}{2}})(1+i v)(1-iv)=(1-v^{\frac{1}{2}})(1+v^{\frac{1}{2}})(1+2^{\frac{1}{2}}v^{\frac{1}{2}}+v)(1-2^{\frac{1}{2}}v^{\frac{1}{2}}+v).
$$

Using Theorem \ref{structhm1}, and the method described in Section \ref{sec:verif:higherrk}, we verified that the conjectural formulae of this subsection produce the correct virtual Segre numbers for the following surfaces up to the following virtual dimensions (always for certain values of $H,c_1$ such that there are no strictly semistable sheaves): 
\begin{itemize}
\item \textbf{Segre case.} For $S$ a $K3$ surface up to virtual dimension $6$, for $S$ the blow-up of a $K3$ surface in a point up to virtual dimension $6$, for $S$ a ``virtual surface'' with $K^2=-1$, $\chi=0$ up to virtual dimension $9$, and for $S$ a ``virtual surface'' with $K^2=-1$, $\chi=1$ up to virtual dimension $8$.
\end{itemize}
Assuming the virtual Segre-Verlinde correspondence (Conjecture \ref{conj3}), the conjectural formulae of this subsection produce the correct virtual Verlinde numbers for the following surfaces up to the following virtual dimensions (always for certain values of $H,c_1$ such that there are no strictly semistable sheaves): 
\begin{itemize}
\item \textbf{Verlinde case.} For $S$ a $K3$ surface up to virtual dimension $5$, for $S$ the blow-up of a $K3$ surface in a point up to virtual dimension $5$, for $S$ a ``virtual surface'' with $K^2=-1$, $\chi=0$ up to virtual dimension $8$, and for $S$ a ``virtual surface'' with $K^2=-1$, $\chi=1$ up to virtual dimension $6$. 
\end{itemize}

\subsection{Galois actions} \label{sec:Gal}
\addtocontents{toc}{\protect\setcounter{tocdepth}{1}}

In Conjectures \ref{conj1} and \ref{conj2}, we stated that the coefficients of the universal power series have $\C$-coefficients. The power series for which we provided an explicit formula in these conjectures have $\Q$-coefficients. For the remaining power series, studied in the previous sections, we found that their coefficients appear to lie in certain Galois extensions of $\Q$. This leads to further conjectures, which we will state for the Segre case, but which can be similarly formulated for the Verlinde case (Remark \ref{GalV}).

Consider the universal function of Conjecture \ref{conj1} 
\begin{align*}
\Phi:= &\rho^{2 - \chi+K^2} \, V_s^{c_2(\alpha)} W_s^{c_1(\alpha)^2} X_s^{\chi} e^{L^2 Q_s + (c_1(\alpha)L) R_s + u \, T_s} \sum_{J \subset [\rho-1]} (-1)^{|J| \chi}  \eps_{\rho}^{\|J\| K c_1} Y_{J,s}^{c_1(\alpha) K} Z_{J,s}^{K^2} e^{(K L) S_{J,s}}.
\end{align*}

\subsubsection*{$\boldsymbol{\rho=2}$.} For rank $\rho=2$ all universal power series of Section \ref{sec:alg:rk2} have rational coefficients and we conjecture this is always the case:
\begin{conjecture}
Let $\rho = 2$. Then $Y_{J,s},Z_{J,s},S_{J,s}  \in \Q[[z^{\frac{1}{2}}]]$ for all $J,s$. 
\end{conjecture}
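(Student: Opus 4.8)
The plan is to argue conditionally on Conjecture \ref{conj1strong} (for $\rho=2$), which supplies both the existence and the factorized shape of the universal series; granting this, rationality will follow by combining the manifest rationality of Mochizuki's formula at $\rho=2$ with a linear-algebra separation of variables. By \eqref{Jseries} it suffices to prove that the six ``building block'' series $Y_s,Z_s,S_s,Y_{1,s},Z_{11,s},S_{1,s}$ lie in $\Q[[z^{\frac12}]]$: for $\rho=2$ (so $[\rho-1]=\{1\}$) one has $Y_{\varnothing,s}=Y_s$, $Y_{\{1\},s}=Y_sY_{1,s}$, and likewise $Z_{\varnothing,s}=Z_s$, $Z_{\{1\},s}=Z_sZ_{11,s}$, $S_{\varnothing,s}=S_s$, $S_{\{1\},s}=S_s+S_{1,s}$, so rationality of the blocks propagates to every $J$.

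First I would record the special arithmetic of rank two. Here $\eps_\rho=\eps_2=-1\in\Q$, and from \eqref{defT} one computes $\frakT_1=\t_1^{-1}$, $\frakT_2=\t_1$, so that $T_1=-t$ and $T_2=t$ with $t:=t_1$. Feeding this into the formula of Theorem \ref{structhm1}, every ingredient is visibly rational: the prefactor $(-1)^{\rho-1}\rho=-2$ and the $\SW(a_i)$ are integers; the exponents $\tfrac12a_j(a_j-K)+\chi$ and $\chi(a_j-a_i)-\chi(a_j)$, $\chi(a_i-a_j)$ are integers (Euler characteristics of integral classes); the factors $(1+T_i)^{-\chi(\alpha\otimes\O_S(a_i-c_1/\rho))}=(1\pm t)^{(\,\cdot\,)}$ expand in $\Q[[t]]$; the factors $(T_j-T_i)^{(\cdot)}=(\pm 2t)^{(\cdot)}$ are integer powers of $\pm2t$; the exponentials $e^{-(a_iL)T_i-\frac12T_i^2u}$ lie in $\Q[[t,u]]$; and the universal functions $A_{\underline{\bullet},s}$ belong to $1+q\,\Q(\!(t)\!)[[q]]$ by Theorem \ref{structhm1}. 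Hence the whole expression lies in $\Q(\!(t)\!)[[q,u]]$, and after applying $\Coeff_{t^0}$, substituting $q=z^2$, and reading off the coefficient of $z^{\vd(M)/2}$ (the explicit half-integral powers $z^{-3/2},z^{-1/2},z$ in Theorem \ref{structhm1} account for the $z^{\frac12}$-grading), every virtual Segre number at $\rho=2$ is rational. Letting the discrete invariant $c_2$ vary, which only shifts $\vd(M)$ and hence the power of $z$ being read, recovers coefficient by coefficient the entire generating series $\Phi$ of Conjecture \ref{conj1strong} as an element of $\Q[[z^{\frac12}]]$ for each fixed $(S,\alpha,L,c_1)$.

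Next I would isolate the individual series. For $\rho=2$ the factorization of Conjecture \ref{conj1strong} reads $\Phi=\rho^{2-\chi+K^2}V_s^{c_2(\alpha)}W_s^{c_1(\alpha)^2}X_s^{\chi}Y_s^{c_1(\alpha)K}Z_s^{K^2}e^{L^2Q_s+(c_1(\alpha)L)R_s+(LK)S_s+uT_s}$ times $\sum_{a_1}(-1)^{a_1c_1}\SW(a_1)Y_{1,s}^{c_1(\alpha)a_1}e^{(a_1L)S_{1,s}}Z_{11,s}^{a_1^2}$. I would then separate variables exactly as in \eqref{ZA}: using the EGL-style universality already underlying Theorem \ref{structhm1}, valid for disconnected and formally for ``virtual'' surfaces, one may treat $c_2(\alpha),c_1(\alpha)^2,\chi,c_1(\alpha)K,K^2,L^2,c_1(\alpha)L,LK$ together with $a_1c_1(\alpha),a_1L,a_1K,a_1^2$ as $\Q$-linearly independent formal parameters. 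Taking the logarithm of the normalized $\Phi$ and comparing coefficients of these parameters expresses $\log V_s,\dots,\log Z_s,Q_s,R_s,S_s,T_s$ and, after peeling off the Seiberg--Witten sum by specializing to one versus two basic classes, the blocks $Y_{1,s},Z_{11,s},S_{1,s}$, as $\Q$-linear combinations of (logarithms of) the rational series from the previous step. Since these operations stay within $\Q[[z^{\frac12}]]$, each building block, hence each $Y_{J,s},Z_{J,s},S_{J,s}$, has rational coefficients.

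The \textbf{main obstacle} is conceptual rather than computational: the argument presupposes Conjecture \ref{conj1strong}, i.e.\ that the factorized shape survives the operator $\Coeff_{t^0}$, which is open. Granting it, the rank-two case is clean precisely because the only possible source of irrationality, the root of unity $\eps_\rho$, degenerates to $-1\in\Q$; for $\rho>2$ the factor $\eps_\rho^{\|J\|Kc_1}$ genuinely leaves $\Q$, which is exactly why the statement is special to $\rho=2$. A secondary point needing care is the bookkeeping of the half-integral $z$-grading and the verification that enough linearly independent intersection-number vectors are realizable (or can be handled formally through the ``virtual surface'' extension of universality), so that the separation of variables is legitimate.
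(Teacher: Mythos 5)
First, a framing point: the paper does not prove this statement at all. It is stated as a conjecture in Section \ref{sec:Gal}, and the only support the paper offers is empirical --- the explicit closed-form guesses in Section \ref{sec:alg:rk2} (all with rational coefficients) together with the low-order localization computations of Sections \ref{sec:verif} and \ref{sec:alg}. So your attempt is necessarily being measured against ``no proof exists,'' and indeed your own argument is explicitly conditional on Conjecture \ref{conj1strong}, which is open. That alone means the proposal cannot count as a proof of the statement; at best it would be a conditional implication.

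Even granting Conjecture \ref{conj1strong}, there is a genuine gap in your separation-of-variables step. Conjecture \ref{conj1strong} asserts \emph{existence} of series in $\C[[z^{\frac12}]]$ with the stated property; to conclude rationality you need \emph{uniqueness}, i.e.\ that the series are pinned down by the (rational) virtual Segre numbers of actual admissible surfaces. Your move of treating $c_2(\alpha),\dots,a_1^2$ as $\Q$-linearly independent formal parameters is not licensed by the universality you cite: the EGL-style universality underlying Theorem \ref{structhm1} applies to the series $A_{\underline{\bullet},s}\in 1+q\,\Q(\!(t)\!)[[q]]$ \emph{before} the operator $\Coeff_{t^0}$, on possibly disconnected surfaces; the series of Conjecture \ref{conj1strong} are different objects, constrained only through actual polarized surfaces with $b_1=0$, $p_g>0$ and semistability hypotheses, where the exponents are far from free (e.g.\ for the relevant surfaces the basic classes are $0$ and $K$, so $a_1c_1(\alpha)$, $a_1L$, $a_1^2$ are locked to $c_1(\alpha)K$, $LK$, $K^2$, and isolating the two $J$-terms requires pairs of surfaces with matched intersection vectors but opposite parity of $\chi+Kc_1$, which you do not exhibit). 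That uniqueness is the actual crux is illustrated by the paper's own Conjectures \ref{conjGal2} and \ref{conjGal3}: for $\rho=3,4$ the decomposition is \emph{not} Galois-rigid --- the $(Y_{J,s},Z_{J,s},S_{J,s})$ are merely permuted while $\Phi$ stays rational --- so rationality at $\rho=2$ is not a formal consequence of rationality of $\Phi$ plus $\eps_2=-1\in\Q$; it additionally needs a label-preserving uniqueness statement that neither you nor the paper establishes. Your first step (rationality of the individual virtual Segre numbers) is correct but carries no force: those numbers are rational by definition, independently of Mochizuki's formula.
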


\subsubsection*{$\boldsymbol{\rho=3}$.} Consider the following generators of $\mathrm{Gal}(\Q(i,3^{\frac{1}{2}}) / \Q) \cong \Z_2 \times \Z_2$
\begin{align*}
\sigma : 3^{\frac{1}{2}} \mapsto - 3^{\frac{1}{2}}, \quad i \mapsto i, \\
\tau : 3^{\frac{1}{2}} \mapsto 3^{\frac{1}{2}}, \quad i \mapsto -i.
\end{align*}
For any formal power series $F$ with $\Q(i,3^{\frac{1}{2}})$-coefficients, we denote by $\sigma(F)$ the power series obtained by acting by $\sigma \in \mathrm{Gal}(\Q(i,3^{\frac{1}{2}}) / \Q)$ on its coefficients. In accordance with the expressions of Section \ref{sec:alg:rk3}, we conjecture the following: 
\begin{conjecture} \label{conjGal2}
Let $\rho=3$. Then $Y_{J,s}, Z_{J,s}, S_{J,s} \in \Q(3^{\frac{1}{2}})[[z^{\frac{1}{2}}]]$ for $J = \varnothing, \{1,2\}$ and all $s \in \Z$  and $Y_{J,s}, Z_{J,s}, S_{J,s} \in \Q(3^{\frac{1}{2}}i)[[z^{\frac{1}{2}}]]$  for $j=\{1\},\{2\}$ and all $s \in \Z$. Moreover for any $L_{J,s} = Y_{J,s}, Z_{J,s}, S_{J,s}$ and $s \in \Z$, we have
\begin{align*}
\sigma(L_{\varnothing,s}) = L_{\{1,2\},s}, \quad \sigma(L_{\{1\},s}) = L_{\{2\},s}, \quad \tau(L_{\{1\},s}) = L_{\{2\},s}.
\end{align*}
\end{conjecture}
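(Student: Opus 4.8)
The plan is to prove that, for each fixed $s$ and with $L$ standing uniformly for $Y$, $Z$, or $S$, the four series $L_{\varnothing,s}$, $L_{\{1\},s}$, $L_{\{2\},s}$, $L_{\{1,2\},s}$ form a single orbit of $\mathrm{Gal}(\Q(i,3^{\frac12})/\Q)\cong\Z_2\times\Z_2$ acting coefficientwise, permuted exactly as claimed. I would first record the elementary observation that two of the three identities are essentially free once the field-of-definition statements are known. Indeed, $\tau(L_{\{1\},s})=L_{\{2\},s}$ is already the relation $L_{\{2\},s}=\tau(L_{\{1\},s})$ conjectured at the start of Section \ref{sec:alg:rk3}. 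Moreover, on the quadratic subfield $\Q(3^{\frac12}i)=\Q(\sqrt{-3})$ the two generators restrict to the \emph{same} nontrivial automorphism $\sqrt{-3}\mapsto-\sqrt{-3}$ (because $\sigma$ negates $3^{\frac12}$ and fixes $i$, while $\tau$ fixes $3^{\frac12}$ and negates $i$, so both send $3^{\frac12}i\mapsto-3^{\frac12}i$). Hence, granting the membership $L_{\{1\},s},L_{\{2\},s}\in\Q(\sqrt{-3})[[z^{\frac12}]]$, one immediately gets $\sigma(L_{\{1\},s})=\tau(L_{\{1\},s})=L_{\{2\},s}$.

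For the last identity, $\sigma(L_{\varnothing,s})=L_{\{1,2\},s}$, I would combine the membership $L_{\varnothing,s}\in\Q(3^{\frac12})[[z^{\frac12}]]$ with the remaining relation of Section \ref{sec:alg:rk3}, namely $L_{\{1,2\},s}(z^{\frac12})=L_{\varnothing,s}(-z^{\frac12})$. It then suffices to prove the \emph{coupling} statement that the irrationality $3^{\frac12}$ enters $L_{\varnothing,s}=\sum_n c_n z^{n/2}$ only through the combination $3^{\frac12}z^{\frac12}$; concretely $c_n\in\Q$ for $n$ even and $c_n\in3^{\frac12}\Q$ for $n$ odd. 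Under this parity, acting by $\sigma$ on the coefficients agrees with the substitution $z^{\frac12}\mapsto -z^{\frac12}$, whence $\sigma(L_{\varnothing,s})(z^{\frac12})=L_{\varnothing,s}(-z^{\frac12})=L_{\{1,2\},s}(z^{\frac12})$. Thus the whole conjecture reduces to the two field memberships and this single coupling fact; as a bonus, these force $\prod_{J\subset[2]}(x-L_{J,s})$ to factor over $\Q[[t]]$ into a quadratic with roots $L_{\varnothing,s},L_{\{1,2\},s}$ of discriminant $3\cdot(\mathrm{square})$ and a quadratic with roots $L_{\{1\},s},L_{\{2\},s}$ of discriminant $-3\cdot(\mathrm{square})$, matching the explicit quartics of Section \ref{sec:alg:rk3}.

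To establish the memberships and the coupling uniformly in $s$, I would track the arithmetic of the coefficient extraction in Theorem \ref{structhm1}. Each $L_{J,s}$ arises from the generating function $\mathsf Z_S$ of \eqref{defZ}, with $a_j=K$ set precisely for $j\in J$ and $a_j=0$ otherwise, followed by $\mathrm{Coeff}_{t_1^0}\mathrm{Coeff}_{t_2^0}$. The relevant integrand is a rational function of the equivariant weights $T_1,T_2,T_3$ (with $T_1+T_2+T_3=0$) built from the characters $\frakT_1,\frakT_2,\frakT_3$ of \eqref{defT}, whose fractional exponents $\tfrac12$ encode the $\mathrm{SU}(3)=A_2$ weight lattice; the residues along the root loci $T_i=T_j$ should be precisely what introduces $3^{\frac12}$, and they do so coupled to the instanton variable $z^{\frac12}$, which is the expected source of the coupling/parity statement. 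The Weyl group $S_3$ permuting $\frakT_1,\frakT_2,\frakT_3$ is what organizes the four choices of $J$ into one Galois orbit, and the third root of unity $\eps_3\in\Q(\sqrt{-3})$ arising for the single-$K$ terms $J=\{1\},\{2\}$ should account for the appearance of $\sqrt{-3}$ there, as seen concretely in the closed forms for $s=0$. An independent check is to feed in the explicit quartics of Section \ref{sec:alg:rk3} (available for the listed $s$, and for all $s$ via the polynomiality of Conjecture \ref{polconj}) and verify by a resolvent and discriminant computation that each factors into the two quadratics above.

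The hard part is exactly the membership and coupling claims for \emph{all} $s$. Pinning down the field of definition of the extracted residues requires either a closed-form evaluation of the rank-$3$ Mochizuki sum or an a priori $S_3$-equivariance statement for the regularized $t_i^0$-coefficients that controls the arithmetic of the coefficients and the way $3^{\frac12}$ couples to $z^{\frac12}$; at present only the finite-order data of Section \ref{sec:verif:higherrk}, together with Conjecture \ref{polconj}, is available, so the underlying quartics are themselves still conjectural. Consequently the argument above proves the Galois permutation \emph{conditionally} on these structural inputs, and an unconditional proof must await closed-form control of the rank-$3$ residues.
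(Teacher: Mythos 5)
The statement you set out to prove is, in the paper, precisely a conjecture (Conjecture \ref{conjGal2}): the authors offer no proof, only experimental support, namely the explicit closed-form guesses of Section \ref{sec:alg:rk3} (obtained by computing the universal series of Theorem \ref{structhm1} to finite order via localization and fitting algebraic functions), the finite-order verifications listed there, and the consistency check that the conjectured Galois action forces the full universal function $\Phi$ to have rational coefficients. So there is no argument in the paper against which your proposal can be matched; it can only be judged on its own terms.

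On those terms, your formal reductions are correct but do not amount to a proof, and you rightly concede this at the end. The two sound observations are: (a) $\sigma$ and $\tau$ restrict to the \emph{same} nontrivial automorphism of $\Q(\sqrt{-3})$, so granting the membership $L_{\{1\},s},L_{\{2\},s}\in\Q(\sqrt{-3})[[z^{\frac{1}{2}}]]$ and the relation $L_{\{2\},s}=\tau(L_{\{1\},s})$, the identity $\sigma(L_{\{1\},s})=L_{\{2\},s}$ is automatic; and (b) given $L_{\varnothing,s}\in\Q(3^{\frac{1}{2}})[[z^{\frac{1}{2}}]]$ and the substitution relation $L_{\{1,2\},s}(z^{\frac{1}{2}})=L_{\varnothing,s}(-z^{\frac{1}{2}})$, the remaining identity $\sigma(L_{\varnothing,s})=L_{\{1,2\},s}$ is \emph{equivalent} to your parity/coupling claim that $3^{\frac{1}{2}}$ enters only through $3^{\frac{1}{2}}z^{\frac{1}{2}}$. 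The genuine gap is that every input to this reduction --- the two relations stated at the start of Section \ref{sec:alg:rk3} (themselves introduced there with ``Based on experimentation, we conjecture''), the field memberships, and the coupling --- is exactly as conjectural as the target statement; indeed, as your own equivalence in (b) shows, the package (membership $+$ coupling $+$ substitution relation) is logically interchangeable with the conjecture, so the reduction repackages it rather than proving it. The one genuinely new ingredient you propose, extracting the arithmetic from the rank-$3$ Mochizuki integrand, is only a heuristic and would need substantial work to be made precise: the operation $\mathrm{Coeff}_{t_1^0}\mathrm{Coeff}_{t_2^0}$ in Theorem \ref{structhm1} is an iterated Laurent-coefficient extraction, not a residue along root loci in any direct sense; the universal series $A_{\bullet,s}$ have coefficients in $\Q(\!(t_1,t_2)\!)$ with no irrationalities at that stage (the quadratic irrationalities, like the factor $\varepsilon_3\in\Q(\sqrt{-3})$, only emerge upon resumming into the conjectural shape of Conjecture \ref{conj1strong}, which is itself unproved); and no $S_3$-equivariance statement controlling how $3^{\frac{1}{2}}$ couples to $z^{\frac{1}{2}}$ is formulated, let alone established. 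Your resolvent/discriminant factorization check against the quartics of Section \ref{sec:alg:rk3} is consistent with the conjecture but, since those quartics are also conjectural (and, for general $s$, rest additionally on Conjecture \ref{polconj}), it is evidence of the same kind the paper already provides, not a proof.
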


Assuming Conjecture \ref{conjGal2}, a direct calculation shows
$$
\sigma(\Phi) = \Phi, \quad \tau(\Phi) = \Phi.
$$
Hence $\Phi$ is invariant under $\mathrm{Gal}(\Q(i,3^{\frac{1}{2}}) / \Q)$. So for any $S,\alpha,L$, the power series $\Phi$ has \emph{rational} coefficients (as it should). This provides a consistency check on Conjecture \ref{conj1}. 

\subsubsection*{$\boldsymbol{\rho=4}$.} Consider the following generators of $\mathrm{Gal}(\Q(i,2^{\frac{1}{2}}) / \Q) \cong \Z_2 \times \Z_2$
\begin{align*}
\sigma : 2^{\frac{1}{2}} \mapsto -2^{\frac{1}{2}}, \quad i \mapsto i, \\
\tau : 2^{\frac{1}{2}} \mapsto 2^{\frac{1}{2}}, \quad i \mapsto -i.
\end{align*}
Based on the expressions of Section \ref{sec:alg:rk4}, we conjecture the following: 
\begin{conjecture} \label{conjGal3}
Let $\rho = 4$. Then $Y_{J,s}$, $Z_{J,s}$, $S_{J,s}  \in \Q(2^{\frac{1}{2}})[[z^{\frac{1}{2}}]]$ for   $\|J\|$ even and all $s \in \Z$, and $Y_{J,s}$, $Z_{J,s}$, $S_{J,s}  \in \Q(i)[[z^{\frac{1}{2}}]]$ for $\|J\|$ odd and all $s \in \Z$. Moreover for any $L_{J,s} = Y_{J,s}, Z_{J,s}, S_{J,s}$ and $s \in \Z$, we have
\begin{align*}
\sigma(L_{\varnothing,s})&=L_{\{1,3\},s},\  \sigma(L_{\{2\},s})=L_{\{1,2,3\},s},\ 
\tau(L_{\{1\},s})=L_{\{3\},s},\ \tau(L_{\{1,2\},s})=L_{\{2,3\},s}.
\end{align*}
\end{conjecture}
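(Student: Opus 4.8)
The plan is to pass from the closed-form conjectural expressions to the iterated-residue formula of Theorem~\ref{structhm1}, and to realise the two generators $\sigma,\tau$ of $\mathrm{Gal}(\Q(i,2^{\frac12})/\Q)$ as honest symmetries of that formula. By \eqref{Jseries} it suffices to control the building blocks $Y_s,Y_{j,s},Z_s,Z_{jk,s},S_s,S_{j,s}$ ($1\le j\le k\le 3$), equivalently the series $L_{J,s}\in\{Y_{J,s},Z_{J,s},S_{J,s}\}$ for all $J\subset\{1,2,3\}$ (see the end of Section~\ref{sec:generalconj}). When the only Seiberg--Witten basic classes are $0$ and $K$, the subset $J$ singles out the sub-sum of Theorem~\ref{structhm1} over configurations $(a_1,\dots,a_4)$ with $a_j=K$ for $j\in J$ and $a_j=0$ for $j\notin J$; thus each $L_{J,s}$ is obtained by applying $\Coeff_{t_1^0}\Coeff_{t_2^0}\Coeff_{t_3^0}$ and extracting the coefficient of $z^{\frac12\vd}$ in a fixed universal expression built from the weights $\frakT_1,\dots,\frakT_4$ of \eqref{defT} and the \emph{rational}-coefficient power series $A_{\underline{\bullet},s}$ of \eqref{ZA}.

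The heart of the argument is that $\sigma$ and $\tau$ are induced by symmetries of this sub-summed residue, so that the algebraicity over $\Q(i,2^{\frac12})$ and the permutation of the $J$'s become manifest. Since the $A_{\underline{\bullet},s}$ are rational, the only sources of irrationality are the fractional weights $T_i$ (carrying the denominators $\tfrac12,\tfrac13$) and the root of unity $i=\eps_4$, which enters the coefficient-extraction step through the centre $\Z_4\subset\SU(4)$; the secondary constant $2^{\frac12}$ reflects the organisation of the four weights $\frakT_i$ into two pairs and is what distinguishes $\|J\|$ even from $\|J\|$ odd. Concretely, I would realise $\tau$ (fixing $2^{\frac12}$, conjugating $i$) by the weight transposition exchanging the labels $1$ and $3$ — such transpositions being implemented on the residue variables by linear sign changes, e.g.~$t_3\mapsto-t_3$ swaps $T_3\leftrightarrow T_4$ — and $\sigma$ (fixing $i$, conjugating $2^{\frac12}$) by the Serre-duality-type involution toggling $a_1,a_3$ between $0$ and $K$. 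Matching the orbits then gives $\tau(L_{\{1\},s})=L_{\{3\},s}$, $\tau(L_{\{1,2\},s})=L_{\{2,3\},s}$, $\sigma(L_{\varnothing,s})=L_{\{1,3\},s}$, $\sigma(L_{\{2\},s})=L_{\{1,2,3\},s}$, while the field statements ($\Q(2^{\frac12})$ for $\|J\|$ even, $\Q(i)$ for $\|J\|$ odd) follow once the $i$-contributions are shown to cancel for the $\sigma$-stable combinations and to persist exactly when $\|J\|$ is odd.

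Two consistency checks both guide and corroborate the argument. First, the full universal function $\Phi$ of Conjecture~\ref{conj1} computes genuinely rational virtual Segre numbers, hence must be fixed by all of $\mathrm{Gal}(\Q(i,2^{\frac12})/\Q)$; verifying $\sigma(\Phi)=\tau(\Phi)=\Phi$ from the proposed branch permutation is a necessary condition and an efficient bookkeeping device, exactly as carried out for $\rho=3$ after Conjecture~\ref{conjGal2}. Second, by the polynomiality Conjecture~\ref{polconj} the coefficient of $z^{\frac n2}$ in each $L_{J,s}$ is a polynomial in $s$ of degree $\le n$; together with the interpolation computations of Section~\ref{sec:verif:higherrk} this reduces the field and permutation statements, order by order in $z$, to finitely many values of $s$, which can be matched against the closed forms recorded in Section~\ref{sec:alg:rk4} (e.g.~$s=0,4$, and through Conjecture~\ref{conj3} the Verlinde values $r=\pm4$).

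The main obstacle is that the statement presupposes the algebraicity of the $L_{J,s}$, which is itself part of the open Conjectures~\ref{conj1} and~\ref{conj1strong}: without a defining algebraic equation one cannot a priori speak of a splitting field or of a Galois action permuting branches. The substantive work is therefore to make the symmetry argument of the second paragraph unconditional, namely to prove directly from Theorem~\ref{structhm1} that the stated permutations of the weights $\frakT_i$ (with the accompanying sign changes $t_i\mapsto-t_i$, and the $z^{\frac12}\mapsto-z^{\frac12}$ substitutions that link these relations to those of Section~\ref{sec:alg:rk4}) intertwine the $J$-labelled sub-sums, and to track precisely how $i$ and $2^{\frac12}$ propagate through the iterated coefficient extraction. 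This delicate combinatorial residue analysis — rather than any one calculation — is where I expect the real difficulty to lie, since the required cancellations do not follow formally from the rationality of the $A_{\underline{\bullet},s}$ alone.
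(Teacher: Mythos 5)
There is a category error to flag first: in the paper, the statement you are addressing is a \emph{conjecture}, and the authors give no proof of it whatsoever. Their entire support consists of (a) the closed-form guesses of Section \ref{sec:alg:rk4} for $s=0,4$ on the Segre side and $r=\pm 4$ on the Verlinde side (the latter obtained only by \emph{assuming} Conjecture \ref{conj3}), matched against localization data computed modulo $t^7$, resp.\ $v^5$, and (b) the consistency check stated right after Conjecture \ref{conjGal3}: assuming the conjecture, a direct computation gives $\sigma(\Phi)=\tau(\Phi)=\Phi$, so $\Phi$ has rational coefficients, as it must. Your third paragraph reproduces exactly these two corroborations, so there you are aligned with the paper. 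But your second paragraph promises something the paper does not attempt --- an unconditional symmetry argument deducing the field statements and the Galois permutations from the residue formula of Theorem \ref{structhm1} --- and that is where the genuine gaps sit.

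Concretely: (i) as you concede yourself, the Galois action on the $L_{J,s}$ is only defined once their algebraicity is known, and that is the open part of Conjecture \ref{conj1strong}; so nothing in the sketch can close to a proof. (ii) Your proposed mechanism for $\sigma$, a ``Serre-duality-type involution toggling $a_1,a_3$ between $0$ and $K$,'' is inconsistent with the paper's duality structure: Conjecture \ref{conj:SDvir} and Corollary \ref{cor:SDvir} fix $J$ and exchange $s\leftrightarrow 2\rho-s$ (up to $z^{\frac{1}{2}}\mapsto -z^{\frac{1}{2}}$), and never move $J$ at fixed $s$. The only relation in the paper that toggles membership in $J$ is the further conjecture $\upsilon\tau(L_{J,s})=L_{[\rho-1]\setminus J,s}$ at the end of Section \ref{sec:Gal}, itself unproven, and for $\rho=4$ it sends $\{2\}\mapsto\{1,3\}$ and $\varnothing\mapsto\{1,2,3\}$ --- not $\varnothing\mapsto\{1,3\}$ --- so it is not your $\sigma$ either. (iii) The substitution $t_3\mapsto -t_3$ does swap $T_3\leftrightarrow T_4$, but this exchanges Mochizuki slots $3$ and $4$, which Theorem \ref{structhm1} treats asymmetrically: slots $1,\ldots,\rho-1$ carry $\SW$-factors and $t_i$-prefactors while slot $\rho$ does not, the constraints $a_iH\le\frac{1}{\rho-i}\sum_{j>i}a_jH$ are slot-ordered, and the iterated extraction $\Coeff_{t_1^0}\Coeff_{t_2^0}\Coeff_{t_3^0}$ is tied to that ordering; so the claimed intertwining of $J$-labelled sub-sums does not follow formally, as you note at the end. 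The honest summary: your proposal is a reasonable research programme, and its two consistency checks coincide with the paper's actual evidence, but neither your argument nor the paper contains a proof, and your identification of the generator $\sigma$ with a duality involution is, as stated, wrong relative to the paper's own (conjectural) symmetries.
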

Assuming Conjecture \ref{conjGal3}, a direct calculation shows that
$$
\sigma(\Phi) = \Phi, \quad \tau(\Phi) = \Phi.
$$
Hence $\Phi$ is invariant under $\mathrm{Gal}(\Q(i,2^{\frac{1}{2}}) / \Q)$. As in the rank 3 case, for any $S,\alpha,L$, the power series $\Phi$ has rational coefficients---another consistency check of Conjecture \ref{conj1}. 

\begin{remark}
We found another interesting symmetry involving the formal parameter $t$ (defined via the change of variables $z = t (1+(1-\tfrac{s}{\rho}) t)^{1-\frac{s}{\rho}}$). Consider the two commuting involutions of $\C[[t^{\frac{1}{2}}]]$ determined by 
 $\upsilon:t^{\frac{1}{2}}\to -t^{\frac{1}{2}}$ and $\tau:i\to -i$. 
Based on our calculations, for $\rho=2,3,4$ and $L_{J,s}$ equal to $Y_{J,s},Z_{J,s},S_{J,s}$ for any $J \subset [\rho-1]$ and $s \in \Z$, we conjecture the relation
 $$\upsilon \tau(L_{J,s})=L_{[\rho-1]\setminus J,s}.$$ 
It seems natural to expect that this relation holds for all ranks $\rho$.
 \end{remark}
 
 \begin{remark} \label{GalV}
 We conjecture that the statements of this subsection hold (verbatim) on the Verlinde side with $\Phi$ replaced by the the universal function of Conjecture \ref{conj2}
\begin{align*}
\Psi := &\rho^{2 - \chi+K^2} \, G_{r}^{\chi(L)} F_{r}^{\frac{1}{2} \chi} \sum_{J \subset [\rho-1]} (-1)^{|J| \chi} \eps_{\rho}^{\|J\| K c_1} A_{J,r}^{K L} B_{J,r}^{K^2},
\end{align*}
and $Y_{J,s},Z_{J,s}$ replaced by $A_{J,r},B_{J,r}$.
 \end{remark}

\section{$K3$, Serre duality, Mari\~{n}o-Moore conjecture} 
\addtocontents{toc}{\protect\setcounter{tocdepth}{2}}

\subsection{$K3$ surfaces} 

Let $(S,H)$ be a polarized $K3$ surface and fix $\rho > 0$, $c_1,c_2$ such that $M:=M_S^H(\rho,c_1,c_2)$ only contains Gieseker $H$-stable sheaves. Then $M$ is deformation equivalent to $S^{[n]}$ \cite{OG,Huy, Yos}, where
$$
n := \tfrac{1}{2}\vd(M) = \rho c_2 - \tfrac{1}{2}(\rho-1) c_1^2 - (\rho^2-1).
$$
Let $\alpha \in K(S)$ be a class of rank $\rk(\alpha)$ and with Chern classes $c_1(\alpha)$ and $c_2(\alpha)$. We define
$$
\vd(\alpha) := 2\rk(\alpha) c_2(\alpha) - (\rk(\alpha)-1) c_1(\alpha)^2 - 2(\rk(\alpha)^2-1).
$$

It is natural to ask whether the insertion $c(\alpha_M)$ on $M$ deforms along to $S^{[n]}$. In this section, we describe a conjectural answer. Let $L \in \Pic(S)$ and let 
$$
\beta:=\beta(\tfrac{1}{\rho}\rk(\alpha),c_1(\alpha)^2, c_1(\alpha)L,\vd(\alpha)) \in K(S) \otimes \Q
$$
be a $K$-theory class satisfying
$$
\rk(\beta) = \tfrac{1}{\rho}\rk(\alpha), \quad c_1(\beta)^2 = c_1(\alpha)^2, \quad c_1(\beta) L = c_1(\alpha) L, \quad \mathrm{vd}(\beta) = \vd(\alpha).
$$
Such a class can be constructed from
$$
\tfrac{1}{\rho} \alpha + (1-\tfrac{1}{\rho}) ([\O_S(c_1(\alpha))] - [\O_S]) + c \cdot [\O_P] 
$$
for appropriate $c \in \Q$ and where $P \in S$. We conjecture the following:\footnote{Conjecture \ref{conjK3} was recently proved by Oberdieck \cite{Obe}.}
\begin{conjecture} \label{conjK3}
Let $(S,H)$ be a polarized K3 surface and fix $\rho>0$, $c_1,c_2$ such that $M:=M_S^H(\rho,c_1,c_2)$ only contains Gieseker $H$-stable sheaves. Then for any $\alpha \in K(S)$
$$
\int_{M} c(\alpha_M) e^{\mu(L)+\mu(\pt) u}= \int_{S^{[n]}} c(\beta^{[n]}) e^{\mu(L)+\rho \mu(\pt) u},
$$
where $n:=\frac{\vd(M)}{2}$ and $\beta:=\beta(\tfrac{1}{\rho}\rk(\alpha),c_1(\alpha)^2,c_1(\alpha)L, \vd(\alpha))  \in K(S) \otimes \Q$.
\end{conjecture}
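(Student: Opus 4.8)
The plan is to prove the identity by deformation invariance, exploiting that for a $K3$ surface the moduli space $M$ is smooth, holomorphic symplectic, and deformation equivalent to $S^{[n]}$ with $n = \tfrac{1}{2}\vd(M)$ \cite{OG, Huy, Yos}. Both sides are intersection numbers of tautological classes on a variety of $K3^{[n]}$-type, so the goal is to show that each side depends only on the deformation type together with a short list of Mukai-theoretic invariants, and that $\beta$ is built precisely so that these invariants agree. The subtlety that makes this nontrivial is that a naive parallel-transport argument fails: the classes $c(\alpha_M)$ are defined through the universal sheaf and do not extend to a general (non-moduli) deformation of $M$. The correct framework is E.~Markman's theory of monodromy operators for moduli of sheaves on $K3$ surfaces, which furnishes cohomological parallel-transport operators that are equivariant for the \emph{universal} (tautological) structures even when no geometric family of sheaves is available.

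First I would package the tautological insertions into a single Mukai-theoretic object. The integrand $c(\alpha_M)\,\exp(\mu(L) + \mu(\pt)u)$ is built entirely from the normalized universal sheaf via slant products: $\ch(\alpha_M)$ comes from $-\pi_{M!}(\pi_S^*\alpha \cdot \E \cdot \det(\E)^{-1/\rho})$, while $\mu(L)$ and $\mu(\pt)$ are the degree-two and degree-four slant products of $-2\rho\,\ch_2(\E \otimes \det(\E)^{-1/\rho})$ against $L$ and $\pt$. Markman's universal class, which becomes monodromy-invariant precisely after this determinant normalization, lets one express the integrand as a fixed cohomological operation applied to the Mukai vector of $\alpha$ and the Mukai element attached to $L$. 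The rescaling of the rank by $1/\rho$ and the factor $\rho$ in front of $\mu(\pt)$ on the right-hand side are exactly the bookkeeping produced by the $\det(\E)^{-1/\rho}$ normalization when one transports the Mukai vector of $M$ to that of the ideal sheaf $(1,0,1-n)$ on $S^{[n]}$.

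Next I would apply a Fujiki-type integration formula. By Fujiki's theorem and its extensions to tautological classes on varieties of $K3^{[n]}$-type, top intersection numbers of $H^2$- and $H^4$-classes are polynomials in the Beauville--Bogomolov--Fujiki form whose coefficients depend only on the deformation type. Combining this with the monodromy-equivariance of the universal structure shows that $\int_M c(\alpha_M)\exp(\mu(L)+\mu(\pt)u)$ is a universal function of the Beauville--Bogomolov pairings among the Mukai data of $\alpha$, $[L]$, and $\pt$, which unwind to exactly the four quantities $\tfrac{1}{\rho}\rk(\alpha)$, $c_1(\alpha)^2$, $c_1(\alpha)L$, and $\vd(\alpha)$. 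Since $\beta$ is defined to realize precisely these values of rank, $c_1^2$, $c_1 L$, and virtual dimension, the integral over $S^{[n]}$ returns the same universal function, and the two sides coincide.

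The main obstacle is controlling the \emph{full} Chern class $c(\alpha_M)$ — not merely its leading terms — under Markman's monodromy, and proving that the resulting intersection numbers genuinely factor through only these four invariants. This needs Markman's precise description of the monodromy action on the entire cohomology ring (not just $H^2$) together with a degree analysis showing that every higher tautological class that could contribute is either monodromy-invariant or reconstructible from the Mukai data via the Beauville--Bogomolov form. Tracking the normalization factors through the Mukai pairing — in particular verifying that $\mu(\pt)$ transports to $\rho\,\mu(\pt)$ and that the rank normalization yields $\rk(\beta) = \tfrac{1}{\rho}\rk(\alpha)$ — is delicate but mechanical once the monodromy-equivariant universal structure is in place.
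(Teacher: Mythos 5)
There is nothing in the paper to compare your argument against: the statement you were asked to prove is stated in the paper as a \emph{conjecture} (Conjecture \ref{conjK3}), with no proof given. The authors only record, in a footnote, that it was subsequently proved by Oberdieck \cite{Obe} using Markman's theory of monodromy operators. Your proposal is, in outline, exactly that route: express the integrand through the $\det(\E)^{-1/\rho}$-normalized universal class, invoke Markman's monodromy-equivariance of the universal (tautological) structure to transport it across the deformation from $M$ to $S^{[n]}$, and use Fujiki-type universality to conclude that the integral depends only on the pairing data $\tfrac{1}{\rho}\rk(\alpha)$, $c_1(\alpha)^2$, $c_1(\alpha)L$, $\vd(\alpha)$, which $\beta$ is built to match. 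You also correctly identify why naive deformation invariance fails (the classes do not extend over non-moduli deformations) and where the rescalings $\rk(\beta)=\tfrac{1}{\rho}\rk(\alpha)$ and $\rho\,\mu(\pt)$ come from.

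That said, as a \emph{proof} your text is a plan rather than an argument: the decisive step --- that the full package $c(\alpha_M)\exp(\mu(L)+\mu(\pt)u)$, in all cohomological degrees, is carried by a monodromy-equivariant universal class, and that the resulting intersection numbers genuinely factor through only the four stated invariants --- is precisely the content of Oberdieck's universality theorem for descendent integrals, and you invoke it rather than establish it. You flag this honestly as ``the main obstacle,'' but it is not a technicality to be deferred; it is the whole theorem. So the fair assessment is: your strategy coincides with the proof the paper cites, your reductions and bookkeeping claims are consistent with the statement's normalizations, but the proposal does not itself close the gap that separates the conjecture from its proof.
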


\begin{corollary}
For $K3$ surfaces and $u=L=0$, Conjecture \ref{conjK3} implies Conjecture \ref{conj1strong}.
Moreover, for $K3$ surfaces, Conjecture \ref{conj1strong} for $\rho=1$ and  Conjecture \ref{conjK3} together imply Conjecture \ref{conj1strong} for all $\rho > 0$.\end{corollary}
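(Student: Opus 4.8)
The plan is to combine Conjecture \ref{conjK3}, which reduces any rank-$\rho$ virtual Segre number on a K3 surface to a rank-$1$ Segre number on a Hilbert scheme $S^{[n]}$, with the rank-$1$ case of Conjecture \ref{conj1strong}, and then to check that the two resulting closed-form generating series coincide. First I would specialize Conjecture \ref{conj1strong} to a K3 surface. Here $K=0$, $\chi=2$, and the only Seiberg-Witten basic class is $0$ with $\SW(0)=1$. Consequently every factor carrying a power of $K$, $K^2$, or $a_j$ trivializes, the sum over $(a_1,\dots,a_{\rho-1})$ collapses to the single term $a_1=\cdots=a_{\rho-1}=0$, and the prefactor $\rho^{2-\chi+K^2}=\rho^{0}=1$. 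Thus the rank-$\rho$ prediction reduces to the coefficient of $z^{\frac12\vd(M)}$ of $V_s^{c_2(\alpha)}\,W_s^{c_1(\alpha)^2}\,X_s^{2}\,e^{L^2 Q_s+(c_1(\alpha)L)R_s+u\,T_s}$, with $s=\rk(\alpha)$ and the rank-$\rho$ series.

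Next I would apply Conjecture \ref{conjK3} to rewrite the left-hand side as $\int_{S^{[n]}}c(\beta^{[n]})\,e^{\mu(L)+\rho\mu(\pt)u}$, where $n=\frac12\vd(M)$ and $\beta$ satisfies $\rk(\beta)=\tfrac1\rho\rk(\alpha)=:\sigma$, $c_1(\beta)^2=c_1(\alpha)^2$, $c_1(\beta)L=c_1(\alpha)L$, and $\vd(\beta)=\vd(\alpha)$; this last identity determines $c_2(\beta)$. Applying the rank-$1$ case of Conjecture \ref{conj1strong} (which for $u=L=0$ is the established Theorem \ref{MOPthm1}) to this integral, with $u$ replaced by $\rho u$, expresses it as the coefficient of $z^{n}$ of $V_\sigma^{(1)\,c_2(\beta)}W_\sigma^{(1)\,c_1(\beta)^2}X_\sigma^{(1)\,2}\,e^{L^2 Q_\sigma^{(1)}+(c_1(\beta)L)R_\sigma^{(1)}+\rho u\,T_\sigma^{(1)}}$, where the superscript $(1)$ denotes the rank-$1$ series evaluated at the rank parameter $\sigma=s/\rho$. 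The crucial observation is that the two changes of variables agree: both read $z=t(1+(1-\sigma)t)^{1-\sigma}$, so the same $t$ and the same exponent $z^{n}=z^{\frac12\vd(M)}$ are extracted on both sides.

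The heart of the argument is then the verification that these two series are literally equal. Writing everything over the three atoms $1+(1-\sigma)t$, $1+(2-\sigma)t$, $1+(1-\sigma)(2-\sigma)t$, I would substitute the closed forms of $V,W,X$ from Conjecture \ref{conj1strong} together with the value $c_2(\beta)=\rho\,c_2(\alpha)+\tfrac{1-\rho}{2}c_1(\alpha)^2+\tfrac{1-\rho^2}{\rho}s$ coming from $\vd(\beta)=\vd(\alpha)$, and check that the exponent of each atom matches. A direct computation shows that the exponents of $1+(2-\sigma)t$ and of $1+(1-\sigma)(2-\sigma)t$ agree immediately, while the exponent of $1+(1-\sigma)t$ matches after using $s/\sigma=\rho$: the extra factors $(1+(1-\sigma)t)^{\rho-1}$, $(1+(1-\sigma)t)^{(1-\rho)/2}$, and $(1+(1-\sigma)t)^{-\frac{(\rho-1)^2}{2\rho}s}$ in the rank-$\rho$ forms of $V_s,W_s,X_s$ are exactly what absorbs the difference between $c_2(\alpha)$ and $c_2(\beta)$. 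For the descendent part one checks $Q_s=Q_\sigma^{(1)}$, $R_s=R_\sigma^{(1)}$, and $T_s=\rho\,T_\sigma^{(1)}$ directly from their explicit formulae; the factor $\rho$ in $T_s$ is precisely accounted for by the replacement $u\mapsto\rho u$ in Conjecture \ref{conjK3}. This yields the rank-$\rho$ formula for all $\rho>0$, and for $u=L=0$ only the $V,W,X$ identities together with the unconditional Theorem \ref{MOPthm1} are needed, giving the first assertion.

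The main obstacle I anticipate is not the exponent bookkeeping, which is routine, but justifying that the rank-$1$ universal formula may legitimately be applied to $\beta\in K(S)\otimes\Q$ of non-integer rank $\sigma=s/\rho$. This requires knowing that the rank-$1$ Segre and descendent series are genuine functions of the rank parameter that extend to rational (indeed arbitrary) values; here one uses that $V_s^{(1)},W_s^{(1)},X_s^{(1)}$ are the explicit algebraic expressions of Theorem \ref{MOPthm1}, and that $Q_s,R_s,T_s$ are, as hypothesized, the explicit algebraic expressions of Conjecture \ref{conj1strong} for $\rho=1$, all manifestly defined for any $s$. One must also confirm that $\int_{S^{[n]}}c(\beta^{[n]})\,e^{\mu(L)+\rho\mu(\pt)u}$ is computed by this same universal formula evaluated at rational Chern data; this follows from the polynomial dependence of the \cite{EGL} universal expressions on the Chern numbers of the inserted class, together with the extension of the rank parameter just described.
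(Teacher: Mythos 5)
Your proposal is correct and follows essentially the same route as the paper: apply Conjecture \ref{conjK3} to reduce to a Segre integral over $S^{[n]}$ for the class $\beta$ of rational rank $s/\rho$, compute $c_2(\beta)=\rho c_2(\alpha)+\tfrac{1}{2}(1-\rho)c_1(\alpha)^2+\tfrac{(1-\rho^2)}{\rho}s$ from $\vd(\beta)=\vd(\alpha)$, evaluate the rank-one universal series (Theorem \ref{MOPthm1} for $u=L=0$, the rank-one case of Conjecture \ref{conj1strong} in general, both extended to rational rank parameter by polynomiality) at $\sigma=s/\rho$, and match the result against the rank-$\rho$ formula, whose Seiberg--Witten sum collapses since $K=0$ and $\SW(0)=1$. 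Your explicit atom-by-atom exponent verification and the identities $Q_s=Q_\sigma^{(1)}$, $R_s=R_\sigma^{(1)}$, $T_s=\rho\,T_\sigma^{(1)}$ (absorbing the $u\mapsto\rho u$ substitution) simply spell out the computation the paper summarizes as ``the resulting expression is precisely the formula of Conjecture \ref{conj1strong}.''
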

\begin{proof}
The Segre numbers $\int_{S^{[n]}} c(\beta^{[n]})$ are calculated by \cite[Thm.~1]{MOP3} (Theorem \ref{MOPthm1}). The rank and Chern classes of $\beta$ are expressed in terms of the rank and Chern classes of $\alpha$ as follows
\begin{align*}
\rk(\beta) &= \tfrac{1}{\rho}\rk(\alpha), \quad c_1(\beta)^2 = c_1(\alpha)^2, \\
c_2(\beta) &= \rho c_2(\alpha) + \tfrac{1}{2}(1-\rho) c_1(\alpha)^2 - \rho \rk(\alpha) + \tfrac{1}{\rho} \rk(\alpha).
\end{align*}
The resulting expression is precisely the formula of Conjecture \ref{conj1strong} for a $K3$ surface and rank $\rho$, where we recall that the only Seiberg-Witten basic class of $S$ is $0$ and $\SW(0)=1$. This proves the first part of the corollary. The second part follows similarly.
\end{proof}

\begin{remark}
Let $S$ be a $K3$ surface with polarizations $H,H'$ and $L \in \Pic(S)$. Let $\rho, \rho' \in \Z_{>0}$ and $c_1,c_1',c_2,c_2'$ be such that $M_{S}^H(\rho,c_1,c_2)$, $M_S^{H'}(\rho',c_1',c_2')$ contain no strictly semistable sheaves. Let $\alpha,\alpha' \in K(S)$ such that
$$
\frac{\rk(\alpha)}{\rho} = \frac{\rk(\alpha')}{\rho'}, \quad c_1(\alpha)^2 = c_1(\alpha')^2, \quad c_1(\alpha) L = c_1(\alpha') L, \quad \vd(\alpha) = \vd(\alpha').
$$
Then Conjecture \ref{conjK3} implies
$$
\int_{M_S^H(\rho,c_1,c_2)} c(\alpha_M) e^{\mu(L)+ \mu(\pt) u / \rho} = \int_{M_{S}^{H'}(\rho',c_1',c_2')} c(\alpha'_M) e^{\mu(L)+ \mu(\pt) u / \rho'}.
$$
\end{remark}

\subsection{Virtual Serre duality}

Applying virtual Serre duality \cite[Prop.~3.13]{FG} to the virtual Verlinde numbers discussed in the introduction gives 
\begin{align}
\begin{split} \label{vSD}
\chi^{\vir}(M, \mu(L) \otimes E^{\otimes r}) &= (-1)^{\vd(M)} \chi^{\vir}(M, \mu(-L) \otimes E^{\otimes -r} \otimes K_{M}^{\vir}) \\
&= (-1)^{\vd(M)} \chi^{\vir}(M, \mu(-L+\rho K_S) \otimes E^{\otimes -r}),
\end{split}
\end{align}
where $K_M^{\vir} := \Lambda^{\vd(M)} \Omega_M^{\vir}$ and the second equality follows from $c_1(T_M^{\vir}) = - \rho \mu(K_S)$ \cite[Prop.~8.3.1]{HL}. This gives relations among the coefficients of the universal functions of Theorem \ref{EGLthm} and Conjecture \ref{conj2}. Clearly, by \eqref{fg}, we have
\begin{align*}
f_{-r/\rho} = f_{r/\rho}, \quad g_{-r/\rho} = g_{r/\rho},
\end{align*}
for all $\rho, r$. Furthermore, for $\rho=1$ (see also \cite{EGL})
$$
A_r = \frac{B_{-r}}{B_r}.
$$
for all $r \in \Z$ (where we dropped the subscript $J = \varnothing$). In general, using the fact that $aK = a^2$ for Seiberg-Witten basic classes $a \in H^2(S,\Z)$, virtual Serre duality \eqref{vSD} suggests the following relations:
\begin{conjecture} \label{conj:SDvir}
For any $\rho > 0$, we have
\begin{align*}
A_{J,-r}(w^{\frac{1}{2}})&=g_{r/\rho}(w)^{1-\rho} A_{J,r}(-w^{\frac{1}{2}})^{-1}, \\
B_{J,-r}(w^{\frac{1}{2}})&=g_{r/\rho}(w)^{\binom{\rho}{2}}A_{J,r}(-w^{\frac{1}{2}})^\rho B_{J,r}(-w^{\frac{1}{2}}),
\end{align*}
for all $J \subset [\rho-1]$ and $r \in \Z$.
\end{conjecture}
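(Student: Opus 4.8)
The plan is to derive both identities directly from virtual Serre duality \eqref{vSD}, granting the universal form of the virtual Verlinde numbers in Conjecture \ref{conj2strong}. Fix $\rho$, $r$, a surface $S$ with $p_g(S)>0$, $b_1(S)=0$, and $L\in\Pic(S)$, and set $L':=-L+\rho K$. By \eqref{vSD} the number $\chi^{\vir}(M,\mu(L)\otimes E^{\otimes r})$ equals $(-1)^{\vd(M)}\chi^{\vir}(M,\mu(L')\otimes E^{\otimes -r})$. I would express the left-hand side as the coefficient of $w^{\frac12\vd(M)}$ of the universal expression $\Psi_r(L,w)$ of Conjecture \ref{conj2strong}, and the right-hand side as $(-1)^{\vd(M)}$ times the coefficient of $w^{\frac12\vd(M)}$ of $\Psi_{-r}(L',w)$. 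The starting observation is that for a series $\Psi=\sum_n c_n w^{n/2}$ one has $(-1)^{\vd}[w^{\frac12\vd}]\Psi=[w^{\frac12\vd}]\Psi(-w^{\frac12})$, so the sign $(-1)^{\vd(M)}$ is exactly the substitution $w^{\frac12}\mapsto-w^{\frac12}$ applied to $\Psi_{-r}(L',w)$. Thus the desired relations become the single power-series identity $\Psi_r(L,w)=\Psi_{-r}(L',w)\big|_{w^{1/2}\mapsto-w^{1/2}}$.

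I would then expand this identity termwise. Using \eqref{fg} together with $g_{-r/\rho}=g_{r/\rho}$ and $f_{-r/\rho}=f_{r/\rho}$, the factors $G_{-r}=G_r$ and $F_{-r}=F_r$ are unchanged and lie in $\C[[w]]$, hence are fixed by $w^{1/2}\mapsto-w^{1/2}$. Writing $\bar X:=X(-w^{1/2})$, I substitute $L'=-L+\rho K$ and record the exponents that appear: $\chi(L')=\chi+\tfrac12L^2-(\rho-\tfrac12)LK+\binom{\rho}{2}K^2$, $L'K=-LK+\rho K^2$, and — crucially, using that $a_jK=a_j^2$ for Seiberg-Witten basic classes — $a_jL'=-a_jL+\rho a_j^2$. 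The factors $\eps_{\rho}^{j a_j c_1}$ and $\SW(a_j)$ are identical on both sides and cancel, as does the prefactor $\rho^{2-\chi+K^2}$.

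Now I would match the two universal functions monomial by monomial in the intersection numbers $\chi,L^2,LK,K^2,a_jL,a_j^2,a_ja_k$, which is legitimate because these numbers are linearly independent over a suitable test collection of triples $(S,L,\boldsymbol a)$ as in Theorem \ref{structhm2}. The $\chi$- and $L^2$-exponents match automatically. Matching the $LK$- and $a_jL$-exponents yields $A_r=G_r^{1-\rho}\bar A_{-r}^{-1}$ and $A_{j,r}=\bar A_{j,-r}^{-1}$; matching the $K^2$-, $a_j^2$- and $a_ja_k$-exponents (feeding in the $\rho K^2$ and $\rho a_j^2$ contributions found above, together with the $A$-relations) yields $\bar B_{-r}=G_r^{\binom{\rho}{2}}A_r^\rho B_r$, $\bar B_{jj,-r}=A_{j,r}^\rho B_{jj,r}$, and $\bar B_{jk,-r}=B_{jk,r}$ for $j<k$. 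Assembling these through the definitions \eqref{Jseries} of $A_{J,r}$ and $B_{J,r}$, and finally applying $w^{1/2}\mapsto-w^{1/2}$, produces exactly the two claimed identities for every $J\subset[\rho-1]$; as a consistency check, for $\rho=1$ (where only $J=\varnothing$ occurs and the series lie in $\C[[w]]$ by Theorem \ref{EGLthm}) they collapse to the known $A_r=B_{-r}/B_r$.

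The main obstacle is not the termwise matching, which is routine, but justifying the passage from the numerical Serre-duality identities to the power-series identity $\Psi_r(L,w)=\bar\Psi_{-r}(L',w)$. Virtual Serre duality only gives equality of the $w^{\frac12\vd(M)}$-coefficients for those $\vd(M)=2\rho c_2-(\rho-1)c_1^2-(\rho^2-1)\chi$ coming from genuine moduli spaces satisfying the hypotheses of Theorem \ref{structhm2}; as $c_2$ varies these exponents fill only one residue class modulo $2\rho$. To upgrade to a full power-series identity one must vary $c_1$, $L$, and the surface $S$ so as to realize every residue and to separate all intersection numbers simultaneously, while keeping the $\eps_{\rho}^{j a_j c_1}$-structure and the divisibility constraint defining $\mu(L')\otimes E^{\otimes-r}$ intact. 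Controlling this interplay is the delicate point, and — together with the fact that the whole argument is conditional on Conjecture \ref{conj2strong} — is why the statement is recorded as a conjecture and why in practice the relations are instead tested numerically by the method of Section \ref{sec:verif:higherrk}.
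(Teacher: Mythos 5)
Your derivation is correct and follows essentially the same route the paper takes: the statement is recorded there as a conjecture precisely because, as you note, virtual Serre duality \eqref{vSD} combined with the universal formula of Conjecture \ref{conj2strong}, the substitution $L'=-L+\rho K$, the identity $a_jK=a_j^2$ for Seiberg--Witten basic classes, and the observation that $(-1)^{\vd(M)}$ amounts to $w^{\frac{1}{2}}\mapsto -w^{\frac{1}{2}}$ only \emph{suggest} the relations rather than prove them. Your termwise matching (including $A_{j,r}=\bar A_{j,-r}^{-1}$, $\bar B_{jj,-r}=A_{j,r}^{\rho}B_{jj,r}$, $\bar B_{jk,-r}=B_{jk,r}$, assembled via \eqref{Jseries}), the $\rho=1$ consistency check $A_r=B_{-r}/B_r$, and your identification of the genuine obstruction---that Serre duality equates coefficients only at the realizable exponents $\frac{1}{2}\vd(M)$, conditional on Conjecture \ref{conj2strong}---accurately reproduce both the paper's motivation and its reason for leaving the statement conjectural and testing it numerically as in Section \ref{sec:verif:higherrk}.
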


In particular, the universal power series $A_{J,r}, B_{J,r}$ with $r<0$ are determined by the universal power series with $r>0$ (and vice versa).

We can combine virtual Serre duality and the virtual Segre-Verlinde correspondence in order to obtain interesting relations among the universal power series for the virtual Segre numbers. A direct calculation shows the following:

\begin{corollary} \label{cor:SDvir}
Assuming Conjectures \ref{conj3} and \ref{conj:SDvir}, we obtain
\begin{align*}
Y_{J,2\rho-s}(z^{\frac{1}{2}})&=\frac{(1+(1-\frac{s}{\rho}) \tau)^\rho}{(1+2(1-\frac{s}{\rho}) \tau)^{\tfrac{\rho}{2}}} Y_{J,s}(-\zeta^{\frac{1}{2}})^{-1},\\
Z_{J,2\rho-s}(z^{\frac{1}{2}})&= \frac{(1+(2-\tfrac{s}{\rho})\tau)^{\frac{1}{2}\rho(\rho-s)}}{(1+(1-\tfrac{s}{\rho})\tau)^{\frac{1}{2} \rho(2\rho-s)}} Y_{J,s}(-\zeta^{\frac{1}{2}})^\rho Z _{J,s}(-\zeta^{\frac{1}{2}}), 
\end{align*}
for all $J \subset [\rho-1]$ and $s \in \Z$, where $z = t (1-(1-\frac{s}{\rho}) t)^{-1+\frac{s}{\rho}}$, $\zeta = \tau (1+(1-\frac{s}{\rho}) \tau)^{1-\frac{s}{\rho}}$, and $\tau =t(1-2(1-\smfr{s}{\rho})t)^{-1}$.
\end{corollary}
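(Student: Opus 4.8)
The plan is to chain together the two conjectural inputs through the Verlinde series and then collapse the result back onto the Segre side. Write $r := s-\rho$, so that $s=\rho+r$ and $2\rho-s=\rho-r$ corresponds to the rank parameter $-r$, and abbreviate $\kappa := 1-\tfrac{s}{\rho}$. First I would apply the virtual Segre--Verlinde correspondence (Conjecture \ref{conj3}) at rank $2\rho-s$, i.e.\ with $r$ replaced by $-r$, which gives
\begin{equation*}
Y_{J,2\rho-s}(z^{\frac{1}{2}}) = \frac{A_{J,-r}(w^{\frac{1}{2}})}{W_{2\rho-s}(z)}, \qquad Z_{J,2\rho-s}(z^{\frac{1}{2}}) = B_{J,-r}(w^{\frac{1}{2}}),
\end{equation*}
where $z,w$ are governed by the variable changes \eqref{varchange2} for $2\rho-s$; in particular $z = t(1-\kappa t)^{-\kappa}$ and $w=v(1+v)^{r^2/\rho^2-1}$ with $v=t(1-\kappa t)^{-1}$, matching the $z$ in the statement. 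Next I would substitute the Serre duality relations of Conjecture \ref{conj:SDvir}, which express $A_{J,-r},B_{J,-r}$ through $A_{J,r}(-w^{\frac{1}{2}})$, $B_{J,r}(-w^{\frac{1}{2}})$ and the factor $g_{r/\rho}(w)=1+v$ (by \eqref{fg}).

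The delicate step is to re-enter the Segre side by applying Conjecture \ref{conj3} at rank $s$ to the series $A_{J,r}(-w^{\frac{1}{2}})$, $B_{J,r}(-w^{\frac{1}{2}})$. Here one must identify the image of the substitution $w^{\frac{1}{2}}\mapsto -w^{\frac{1}{2}}$ under the coordinate transformation $w^{\frac{1}{2}} = v^{\frac{1}{2}}(1+v)^{\frac{1}{2}(r^2/\rho^2-1)}$ of the footnote to Conjecture \ref{conj3}: since $(1+v)$ is even in $v^{\frac{1}{2}}$, flipping the sign of $w^{\frac{1}{2}}$ amounts precisely to flipping the sign of $v^{\frac{1}{2}}$ while keeping $v$ fixed. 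Tracing this through the rank-$s$ chain $v\mapsto\tau\mapsto\zeta$, where $\tau$ is the rank-$s$ analogue of $t$ determined by $v=\tau(1+\kappa\tau)^{-1}$ and $\zeta=\tau(1+\kappa\tau)^{\kappa}$ is the rank-$s$ Segre variable, I would first show that the composition $t\mapsto v\mapsto\tau$ simplifies to $\tau = t(1-2\kappa t)^{-1}$, and that the branch choice sends $\zeta^{\frac{1}{2}}=\tau^{\frac{1}{2}}(1+\kappa\tau)^{\kappa/2}\mapsto -\zeta^{\frac{1}{2}}$ while leaving the integer-power factor $W_s(\zeta)$ (and $W_{2\rho-s}(z)$) unchanged, as these depend on $v$ only through $\zeta$ (resp.\ $z$), which is even in $v^{\frac{1}{2}}$. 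This yields
\begin{equation*}
A_{J,r}(-w^{\frac{1}{2}}) = W_s(\zeta)\,Y_{J,s}(-\zeta^{\frac{1}{2}}), \qquad B_{J,r}(-w^{\frac{1}{2}}) = Z_{J,s}(-\zeta^{\frac{1}{2}}).
\end{equation*}

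Assembling the three substitutions produces
\begin{equation*}
Y_{J,2\rho-s}(z^{\frac{1}{2}}) = \frac{(1+v)^{1-\rho}}{W_{2\rho-s}(z)\,W_s(\zeta)}\,Y_{J,s}(-\zeta^{\frac{1}{2}})^{-1},
\end{equation*}
together with the analogous expression carrying $g_{r/\rho}(w)^{\binom{\rho}{2}}=(1+v)^{\binom{\rho}{2}}$ and $[W_s(\zeta)Y_{J,s}(-\zeta^{\frac{1}{2}})]^{\rho}$ for $Z_{J,2\rho-s}$. It then remains to evaluate the two prefactors. Using the explicit formula for $W_s$ from Conjecture \ref{conj1} and the inversion $t=\tau(1+2\kappa\tau)^{-1}$ (so $1-2\kappa t=(1+2\kappa\tau)^{-1}$, $1-\kappa t=(1+\kappa\tau)(1+2\kappa\tau)^{-1}$, and $1+\tfrac{s}{\rho}t=(1+(1+\kappa)\tau)(1+2\kappa\tau)^{-1}$), I would rewrite $1+v$, $W_{2\rho-s}(z)$ and $W_s(\zeta)$ as products of powers of the three monomials $1+\kappa\tau$, $1+(1+\kappa)\tau$ and $1+2\kappa\tau$ and collect exponents. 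A short bookkeeping shows the $(1+(1+\kappa)\tau)$-contributions cancel and the remainder collapses to exactly $\tfrac{(1+\kappa\tau)^{\rho}}{(1+2\kappa\tau)^{\rho/2}}$ and $\tfrac{(1+(1+\kappa)\tau)^{\frac{1}{2}\rho(\rho-s)}}{(1+\kappa\tau)^{\frac{1}{2}\rho(2\rho-s)}}$, which are the stated prefactors upon writing $\kappa=1-\tfrac{s}{\rho}$ and $1+\kappa=2-\tfrac{s}{\rho}$. The hard part is not any single algebraic identity but the consistent tracking of the two nested variable changes together with the square-root branches: one must verify that the sign flip $w^{\frac{1}{2}}\mapsto-w^{\frac{1}{2}}$ coming from Serre duality propagates to $\zeta^{\frac{1}{2}}\mapsto-\zeta^{\frac{1}{2}}$ but leaves every integer-power factor intact, and that the three distinct parameters $t$, $v$, $\tau$ compose in the correct order to give $\tau=t(1-2(1-\tfrac{s}{\rho})t)^{-1}$.
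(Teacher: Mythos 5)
Your proposal is correct and is precisely the ``direct calculation'' the paper invokes without writing out: chain Conjecture \ref{conj3} at parameter $2\rho-s$ (i.e.\ $-r$) with Conjecture \ref{conj:SDvir} and then Conjecture \ref{conj3} at parameter $s$, tracking the variable changes. Your bookkeeping checks out in detail --- the composition $t\mapsto v\mapsto\tau$ does give $\tau=t(1-2(1-\tfrac{s}{\rho})t)^{-1}$, the sign flip $w^{\frac{1}{2}}\mapsto -w^{\frac{1}{2}}$ propagates to $\zeta^{\frac{1}{2}}\mapsto-\zeta^{\frac{1}{2}}$ while fixing all integer-power factors, and the exponent collections $(1+v)^{1-\rho}W_{2\rho-s}(z)^{-1}W_s(\zeta)^{-1}=(1+\kappa\tau)^{\rho}(1+2\kappa\tau)^{-\rho/2}$ and $(1+v)^{\binom{\rho}{2}}W_s(\zeta)^{\rho}=(1+(1+\kappa)\tau)^{\frac{1}{2}\rho(\rho-s)}(1+\kappa\tau)^{-\frac{1}{2}\rho(2\rho-s)}$ both reproduce the stated prefactors.
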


In particular, the universal power series $Y_{J,s}, Z_{J,s}$ with $s<\rho$ are determined by the universal power series with $s>\rho$ (and vice versa). 
In all the cases where we conjectured explicit formulae for the universal power series $A_{J,r}, B_{J,r}$ and $Y_{J,s}, Z_{J,s}$ in Section \ref{sec:alg}, we verified that they satisfy the formulae of Conjecture \ref{conj:SDvir} and Corollary \ref{cor:SDvir}.

\subsection{Mari\~{n}o-Moore conjecture} \label{sec:Donaldson}

Donaldson invariants \cite{Don} are diffeomorphism invariants of differentiable $4$-manifolds.
Let $(S,H)$ be a smooth polarized surface satisfying $b_1(S) = 0$, $p_g(S)>0$, and let $L \in \Pic(S)$. Suppose $M:=M_S^H(\rho,c_1,c_2)$ does not contain strictly semistable sheaves. In algebraic geometry, one can define the corresponding $\SU(\rho)$ Donaldson invariants of $S$ by
\begin{align*}
D_{\rho,c_1,c_2}^{S,H}(L  + \pt \, u) &:= \int_{[M^{H}_S(\rho,c_1,c_2)]^\vir} e^{\mu(L)+\mu(\pt)u}, \\
D_{\rho,c_1}^{S,H}(L  + \pt \, u) &:= \sum_{c_2} D_{\rho,c_1,c_2}^{S,H}(L  + \pt \, u) \, z^{\frac{\vd}{2}}
\end{align*}
where $\pt \in H^4(S,\Z)$ denotes the (Poincar\'e dual of the) point class, $u$ is a formal variable, and $\vd$ is given by \eqref{vd}. As mentioned in Section \ref{sec:moc}, $S$ has finitely many Seiberg-Witten basic classes $a \in H^2(S,\Z)$ satisfying $a^2 = a K$ and $\SW(K-a) = (-1)^\chi \SW(a)$. When $a$ is a Seiberg-Witten basic class of $S$, we define
$$
\widetilde{a} := 2a - K.
$$
Then $\widetilde{a}$ corresponds to a Seiberg-Witten basic class in the sense of differential geometry and $\SW(a) = \widetilde{\SW}(\widetilde{a})$, where $\widetilde{\SW}$ denotes the Seiberg-Witten invariant from differential geometry \cite{Mor}.

In the rank 1 case, Proposition \ref{rk1Witten} gives 
\begin{equation} \label{rk1D}
D_{1,c_1}^{S,H}(L+\pt \, u,z)=e^{(\frac{1}{2}L^2+u)z}.
\end{equation}
The Witten conjecture \cite{Wit} for the $\SU(2)$ Donaldson invariants of $S$ can be stated as follows: $D_{2,c_1,c_2}^{S,H}(L  + \pt \, u)$ equals the coefficient of $z^{\mathrm{vd}(M)/2}$ of
\begin{equation} \label{rk2D}
2^{2-\chi+K^2}\ e^{(\frac{1}{2} L^2+2 u) z}\sum_{a \in H^2(S,\Z)} \SW(a) (-1)^{a c_1} e^{-\widetilde{a} L z^{\frac{1}{2}}}.
\end{equation}
This was proved in \cite{GNY3} for complex smooth projective surfaces with $b_1(S) = 0$ and $p_g(S)>0$, and (under a technical assumption) for all differentiable $4$-manifolds $M$ with $b_1(M) = 0$, odd $b^+(M)$, and of Seiberg-Witten simple type in \cite{FL1, FL2}.
In \cite{MM}, M.~Mari\~{n}o and G.~Moore predicted the existence of $\SU(\rho)$ Donaldson invariants and a higher rank Witten conjecture was proposed. See also \cite[(10.107)]{LM}. The gauge theoretic definition of higher rank Donaldson invariants was given by P.B.~Kronheimer \cite{Kro}. 

In the special case $\alpha=0$, Conjecture \ref{conj1strong} implies a structure formula for the higher rank Donaldson invariants of $S$. Then $s=0$ and we have universal power series $X_0, Q_0, T_0$, $Z_0, Z_{jk,0}$, $S_0, S_{j,0}$, for all $1 \leq j \leq k \leq \rho-1$, satisfying
$$
X_0(z) = 1, \quad Q_0(z)=\tfrac{1}{2} z, \quad T_0(z)=\rho z.
$$
Moreover, in Section \ref{sec:verif}, we conjecturally found for $\rho=2,3,4$ that $Z_0, Z_{jk,0}$ are certain constants and $S_0, S_{j,0}$ are certain multiples of $z^{\frac{1}{2}}$. This leads us to the following explicit ``$\SU(3)$ and $\SU(4)$ Witten conjectures''. 

Let $(S,H)$ be a smooth polarized surface with $b_1(S) = 0$, $p_g(S)>0$, and let $L \in \Pic(S)$. Suppose $M:=M_S^H(\rho,c_1,c_2)$ does not contain strictly semistable sheaves. Then, for $\rho=3$ and $\rho=4$, the Donaldson invariant  $D_{\rho,c_1,c_2}^{S,H}(L  + \pt \, u)$ is given by the coefficient of $z^{\mathrm{vd}(M)/2}$ of (respectively)
\begin{align}
\begin{split} \label{rk3and4D}
3^{2-\chi+K^2} e^{(\frac{1}{2}L^2+3 u)z} \sum_{(a_1,a_2)} &\SW(a_1) \SW(a_2) \eps_3^{(a_1+2a_2)c_1} e^{\frac{\sqrt{3}}{2}(\widetilde a_1+\widetilde a_2)Lz^{\frac{1}{2}}}2^{\frac{1}{4}(\widetilde a_1+\widetilde a_2)^2},\\
4^{2-\chi+K^2} e^{(\frac{1}{2}L^2+4 u)z}\sum_{(a_1,a_2,a_3)} & \SW(a_1) \SW(a_2) \SW(a_3) i^{(-a_1+2a_2-3a_3)c_1} e^{\big(\frac{\sqrt{2}}{2}\widetilde a_1-\widetilde a_2+\frac{\sqrt{2}}{2}\widetilde a_3 \big)Lz^{\frac{1}{2}}} \\
&\times 2^{K^2+ \frac{1}{4}(\widetilde a_1 + \widetilde a_2)(\widetilde a_1+\widetilde a_3)} \big(1-\smfr{\sqrt{2}}{2} \big)^{\frac{1}{2}\widetilde a_2(\widetilde a_1 +\widetilde a_3)}.
\end{split}
\end{align}
In the rank 4 case, we used some slight rewriting involving the relations $a^2 = a K$ and $\SW(K-a) = (-1)^\chi \SW(a)$ for Seiberg-Witten basic classes $a \in H^2(S,\Z)$.

Based on this, it is natural to conjecture the following, which can be seen as an algebro-geometric version of the Mari\~{n}o-Moore conjecture \cite[(9.17)]{MM}, \cite[(10.107)]{LM}.
\begin{conjecture} \label{Donaldsonconj}
For any $\rho \in \Z_{>0}$, there  are universal algebraic numbers $\gamma$, $\gamma_{jk}$,  $\delta_\ell \in \overline \Q$ for all $j <k,\ell \in \{ 1, \ldots, \rho-1 \}$ with the following property. Let $(S,H)$ be a smooth polarized surface with $b_1(S) = 0$, $p_g(S)>0$, and let $L \in \Pic(S)$. Suppose $M:=M_S^H(\rho,c_1,c_2)$ does not contain strictly semistable sheaves. Then $D_{\rho,c_1,c_2}^{S,H}(L  + \pt \, u)$ equals the coefficient of $z^{\mathrm{vd}(M)/2}$ of
\begin{align*}
\rho^{2-\chi+K^2} \gamma^{K^2}  e^{(\frac{1}{2}L^2+\rho u)z}\sum_{(a_1,\ldots,a_{\rho-1})} \prod_{j=1}^{\rho-1} \eps_{\rho}^{j(a_jc_1)} \SW(a_j) e^{\delta_j(\widetilde a_j L) z^{\frac{1}{2}}} \prod_{1\le j< k\le \rho-1}\gamma_{jk}^{\widetilde a_j\widetilde a_k},
\end{align*}
where the sum is over all $(a_1, \ldots, a_{\rho-1}) \in H^2(S,\Z)^{\rho-1}$.
\end{conjecture}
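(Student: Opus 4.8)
The plan is to deduce the formula from Conjecture \ref{conj1strong} specialized to $\alpha=0$, where $s=\rk(\alpha)=0$, together with the low-degree behaviour of the $s=0$ universal series that is verified for $\rho\le 4$ in Section \ref{sec:verif}. First I would set $\alpha=0$, so that $c_1(\alpha)=c_2(\alpha)=0$ and every factor of Conjecture \ref{conj1strong} carrying an exponent in $c_1(\alpha)$ or $c_2(\alpha)$ — namely $V_s$, $W_s$, $R_s$, $Y_s$ and all $Y_{j,s}$ — disappears. The closed forms of Conjecture \ref{conj1strong} at $s=0$ (where $z=t(1+t)$) give $X_0=1$, $Q_0=\tfrac12 z$ and $T_0=\rho z$, so the prefactor reduces to $\rho^{2-\chi+K^2}e^{(\frac12 L^2+\rho u)z}$. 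The virtual Segre number is then the $z^{\frac12\vd(M)}$-coefficient of
\[
Z_0^{K^2}\,e^{(LK)S_0}\sum_{(a_1,\dots,a_{\rho-1})}\prod_{j=1}^{\rho-1}\eps_\rho^{j\,a_jc_1}\,\SW(a_j)\,e^{(a_jL)S_{j,0}}\prod_{1\le j\le k\le\rho-1}Z_{jk,0}^{a_ja_k}.
\]

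The second ingredient is the structural fact — exactly what is computed for $\rho=2,3,4$ — that at $s=0$ each of $Z_0$ and $Z_{jk,0}$ is a \emph{constant} and each of $S_0$ and $S_{j,0}$ is a \emph{constant multiple of $z^{1/2}$}, say $S_0=s_0 z^{1/2}$ and $S_{j,0}=\sigma_j z^{1/2}$. With this input the only surviving power-series data are the $z^{1/2}$ attached to the linear $L$-insertions; the bilinear factors $Z_{jk,0}^{a_ja_k}$ and the constant $Z_0^{K^2}$ carry no $z$. Because Conjecture \ref{conj1strong} asserts that $Z_s$, $Z_{jk,s}$, $S_s$, $S_{j,s}$ are algebraic functions, the constants $Z_0$, $Z_{jk,0}$, $s_0$, $\sigma_j$ lie in $\overline\Q$, which will give the algebraicity of $\gamma$, $\gamma_{jk}$, $\delta_\ell$.

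Third, I would substitute $a_j=\tfrac12(\widetilde a_j+K)$ and use the Seiberg--Witten relations $a_j^2=a_jK$ (so $\widetilde a_j^2=K^2$ and $\widetilde a_jK=2a_j^2-K^2$) together with $\SW(K-a)=(-1)^\chi\SW(a)$. One computes $a_ja_k=\tfrac14\widetilde a_j\widetilde a_k+\tfrac14(\widetilde a_j+\widetilde a_k)K+\tfrac14K^2$ and $a_j^2=\tfrac12\widetilde a_jK+\tfrac12K^2$; the $\widetilde a_jK$-contributions coming from the diagonal factors $Z_{jj,0}$ and the off-diagonal $Z_{jk,0}$ cancel by the relations among the $Z$-series, the $K^2$-contributions collect into a single power $\gamma^{K^2}$ with $\gamma=Z_0\prod_j Z_{jj,0}^{1/2}\prod_{j<k}Z_{jk,0}^{1/4}$, and each $Z_{jk,0}^{a_ja_k}$ leaves the factor $\gamma_{jk}^{\widetilde a_j\widetilde a_k}$ with $\gamma_{jk}=Z_{jk,0}^{1/4}$. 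For the linear part, $a_jL=\tfrac12\widetilde a_jL+\tfrac12KL$ turns $\sum_j(a_jL)\sigma_j+s_0(LK)$ into $\sum_j\delta_j(\widetilde a_jL)$ with $\delta_j=\tfrac12\sigma_j$, the residual $(KL)z^{1/2}$ terms cancelling because $s_0+\tfrac12\sum_j\sigma_j=0$ (a consequence of the symmetries of the $S$-series, cf.\ \eqref{Jseries}). This is precisely the bookkeeping already displayed in \eqref{rk2D} for $\rho=2$ and \eqref{rk3and4D} for $\rho=3,4$, completing the identification for those ranks.

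The hard part will be the structural input of the second step for \emph{arbitrary} $\rho$: that the $s=0$ series $Z_0$, $Z_{jk,0}$ are constant and $S_0$, $S_{j,0}$ are pure $z^{1/2}$-monomials. This is strictly stronger than Conjecture \ref{conj1strong}, which only fixes $V_s,\dots,T_s$ and merely posits algebraicity of the rest, and is presently established only by the $\rho\le4$ computations. A conceptual proof would plausibly return to Theorem \ref{structhm1} with $\alpha=0$ and exhibit a weight/virtual-dimension homogeneity forcing the instanton part at $s=0$ into degrees $0$ and $\tfrac12$ alone — the polynomiality in $s$ of Conjecture \ref{polconj} is encouraging here — but making this rank-uniform, and verifying that the cancellation of the $\widetilde a_jK$-terms in the third step persists for all $\rho$, is the principal obstacle.
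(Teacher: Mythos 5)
This statement is a conjecture rather than a theorem, so the paper contains no proof of it; it arrives at the formula by exactly your route, namely specializing Conjecture \ref{conj1strong} to $\alpha=0$ (so $s=0$ and $X_0=1$, $Q_0=\tfrac{1}{2}z$, $T_0=\rho z$), invoking the $\rho=2,3,4$ computations showing that $Z_0$, $Z_{jk,0}$ are constants and $S_0$, $S_{j,0}$ are constant multiples of $z^{\frac{1}{2}}$, and then rewriting via $\widetilde{a}_j=2a_j-K$ using $a_j^2=a_jK$ and $\SW(K-a)=(-1)^{\chi}\SW(a)$ to obtain \eqref{rk2D} and \eqref{rk3and4D} before extrapolating to all $\rho$. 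Your bookkeeping is correct and consistent with the paper (e.g.\ for $\rho=3$ it recovers $\gamma=\gamma_{12}=2^{\frac{1}{2}}$ and $\delta_j=\tfrac{\sqrt{3}}{2}$), and you rightly flag the one genuine gap: the rank-uniform constancy/monomiality of the $s=0$ series is strictly stronger than Conjecture \ref{conj1strong}, is verified only for $\rho\le 4$, and is precisely the extra content that makes the statement a conjecture.
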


\begin{remark}
For $c_1=0$, the previous expressions manifestly only depend on the oriented diffeomorphism type of $S$ (by expression $\chi$, $K^2$ in terms of $e(S)$, $\sigma(S)$). In this case, the ``stable equals semistable'' assumption can be satisfied by taking $\gcd(\rho,c_2)=1$. If in addition $\vd(M) = 0$, we obtain intriguing expressions for the ``virtual point count'' $\int_{[M]^{\vir}} 1 = e^{\vir}(M) \in \Z$.
\end{remark}

\begin{remark}
For fixed $\rho,S,H,L,c_1$, the power series in Conjecture \ref{Donaldsonconj} is (a priori) an element of $\overline{\Q}[[u, z^{\frac{1}{2}}]]$. From the definition of Donaldson invariants, we also know it should be an element of $\Q[[u, z^{\frac{1}{2}}]]$. For ranks $\rho=1$ and $\rho=2$, note that \eqref{rk1D} and \eqref{rk2D} manifestly have rational coefficients. However, for $\rho = 3$ and $\rho = 4$, the power series in \eqref{rk3and4D} a priori are elements of $\Q(i,3^{\frac{1}{2}})[[u,z^{\frac{1}{2}}]]$ and $\Q(i,2^{\frac{1}{2}})[[u, z^{\frac{1}{2}}]]$. If $S$ is minimal of general type, then the Galois invariance discussed in Section \ref{sec:Gal} shows that they indeed have rational coefficients.
\end{remark}

From the perspective of Donaldson theory and Nekrasov partition functions, Conjecture \ref{conj1strong}, and the corresponding algebraic functions in Section \ref{sec:alg}, can be seen as a Witten type conjecture for gauge group $\SU(\rho)$ and fundamental matter insertions.

\subsection{Disconnected canonical curve}

\begin{proposition}
Let $S$ be a smooth projective surface such that $b_1(S) = 0$, $p_g(S)>0$, and $|K_S|$ contains an element of the form $C_1 + \cdots +C_m$, where $C_j \subset S$ are mutually disjoint irreducible reduced curves. Fix $H,\rho>0,c_1,c_2$ such that $M:=M_S^H(\rho,c_1,c_2)$ contains no strictly semistable sheaves. Let $L \in \Pic(S)$, $r \in \Z$, and assume Conjecture \ref{conj2strong} holds for $\chi^{\vir}(M,\mu(L) \otimes E^{\otimes r})$. Then it is given by the coefficient of $w^{\vd(M)/2}$ of
$$
\rho^{2 - \chi+K^2} \, G_r^{\chi(L)} F_r^{\frac{1}{2} \chi}  \prod_{\ell =1}^{m}  \sum_{J \subset [\rho - 1]} (-1)^{|J| h^0(N_{C_\ell / S})} \eps_{\rho}^{\|J\| C_\ell c_1} A_{J,r}^{L C_\ell} B_{J,r}^{C_\ell^2},
$$ 
where $N_{C_\ell / S}$ denotes the normal bundle of $C_\ell$ in $S$.
\end{proposition}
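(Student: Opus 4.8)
The plan is to start from the formula of Conjecture \ref{conj2strong}, insert the Seiberg--Witten data of the particular surface $S$, and then use the disjointness of the $C_\ell$ to factor the sum over tuples of basic classes into a product over $\ell$. The genuinely geometric input is the structure of the Seiberg--Witten invariants. Since $b_1(S)=0$ and $p_g(S)>0$, every basic class is represented by an effective divisor $D$ with $K-D$ also effective; because $K=C_1+\cdots+C_m$ with the $C_\ell$ mutually disjoint, irreducible and reduced, such a $D$ is forced to be a partial sum $a_T:=\sum_{\ell\in T}C_\ell$ for some $T\subseteq\{1,\ldots,m\}$. Disjointness gives $C_\ell C_{\ell'}=0$ for $\ell\neq\ell'$, so $a_T^2=\sum_{\ell\in T}C_\ell^2=a_TK$, confirming the dimension-zero condition $a^2=aK$ of \cite[Prop.~6.3.1]{Moc}. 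I would then invoke the product formula for the Seiberg--Witten invariants of such a surface,
\[
\SW(a_T)=\prod_{\ell\in T}(-1)^{h^0(N_{C_\ell/S})},
\]
which follows from the computation of Seiberg--Witten invariants of Kähler surfaces (cf.~\cite{Mor}). As a sanity check, for $m=1$, $C_1=K$ the exact sequence $0\to\O_S\to\O_S(K)\to N_{C_1/S}\to 0$ together with $h^1(\O_S)=0$ gives $h^0(N_{C_1/S})=p_g-1$, so $\SW(K)=(-1)^{p_g-1}=(-1)^{\chi}$, recovering \cite[Thm.~7.4.1]{Mor}.

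With this input the combinatorics is forced. A tuple of basic classes $(a_1,\ldots,a_{\rho-1})$, written $a_j=a_{T_j}$, is the same datum as a tuple $(J_1,\ldots,J_m)$ of subsets $J_\ell\subseteq[\rho-1]$, where $J_\ell:=\{\,j:\ell\in T_j\,\}$ records in which $a_j$ the curve $C_\ell$ occurs; this is just the transpose of the incidence matrix on $[\rho-1]\times\{1,\ldots,m\}$. Under this bijection, and using $C_\ell C_{\ell'}=0$, every ingredient of Conjecture \ref{conj2strong} factors over $\ell$. Indeed $a_jc_1=\sum_{\ell:\,j\in J_\ell}C_\ell c_1$ gives $\prod_j\eps_{\rho}^{j a_jc_1}=\prod_\ell\eps_{\rho}^{\|J_\ell\|C_\ell c_1}$; likewise $\prod_jA_{j,r}^{a_jL}=\prod_\ell\prod_{j\in J_\ell}A_{j,r}^{C_\ell L}$; the cross terms collapse to $a_ja_k=\sum_{\ell:\,j,k\in J_\ell}C_\ell^2$, whence $\prod_{j\le k}B_{jk,r}^{a_ja_k}=\prod_\ell\prod_{j\le k\in J_\ell}B_{jk,r}^{C_\ell^2}$; and the product formula yields $\prod_j\SW(a_j)=\prod_\ell(-1)^{|J_\ell|\,h^0(N_{C_\ell/S})}$. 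Finally the prefactor splits as $A_r^{LK}B_r^{K^2}=\prod_\ell A_r^{LC_\ell}B_r^{C_\ell^2}$ since $LK=\sum_\ell LC_\ell$ and $K^2=\sum_\ell C_\ell^2$.

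Collecting the $\ell$-th factor and using the definitions in \eqref{Jseries}, namely $A_{J,r}=A_r\prod_{j\in J}A_{j,r}$ and $B_{J,r}=B_r\prod_{i\le j\in J}B_{ij,r}$, I obtain for each $\ell$ the expression $\sum_{J\subseteq[\rho-1]}(-1)^{|J|\,h^0(N_{C_\ell/S})}\eps_{\rho}^{\|J\|C_\ell c_1}A_{J,r}^{LC_\ell}B_{J,r}^{C_\ell^2}$, which is precisely the claimed factor. Multiplying back by $\rho^{2-\chi+K^2}G_r^{\chi(L)}F_r^{\frac12\chi}$ and extracting the coefficient of $w^{\frac12\vd(M)}$ finishes the argument.

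The one step that is not mere bookkeeping is the Seiberg--Witten computation in the first paragraph: showing that the basic classes of $S$ are exactly the partial sums $a_T$ and establishing the multiplicative sign formula $\SW(a_T)=\prod_{\ell\in T}(-1)^{h^0(N_{C_\ell/S})}$. Everything downstream is an exact factorization made possible by the vanishing $C_\ell C_{\ell'}=0$, so I expect this SW input to be the main obstacle, while the reorganization of the sum is routine once it is in hand.
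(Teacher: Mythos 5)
Your proposal is correct and takes essentially the same route as the paper: the paper's proof invokes \cite[Lem.~5.14]{GK1} for precisely your Seiberg--Witten input and then performs the same factorization over the curves, organizing the reindexing by the partition $P_J=\{\ell : J_\ell = J\}$ of $[m]$ rather than by your (equivalent, transposed) incidence sets $J_\ell$. One small imprecision to note: when distinct partial sums $C_T$, $C_{T'}$ are linearly equivalent (as happens, e.g., for elliptic surfaces $E(n)$), the invariant of the cohomology class is $\SW(C_T)=|[T]|\prod_{\ell\in T}(-1)^{h^0(N_{C_\ell/S})}$, not your unweighted product, and your claimed bijection between class-tuples and subset-tuples fails literally --- but these two discrepancies cancel exactly when you pass to the sign-weighted sum over subset-tuples, which is the form the paper works with from the start.
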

\begin{proof}
For any $J \subset [m] := \{1, \ldots, m\}$, define $C_J := \sum_{j \in J} C_j$, where $C_{\varnothing} := 0$. For any $J,K \subset [m]$, we define $J \sim K$ when $C_J$ and $C_K$ are linearly equivalent. This defines an equivalence relation and we denote the equivalence class corresponding to $J$ by $[J]$. By \cite[Lem.~5.14]{GK1}, the Seiberg-Witten basic classes of $S$ are $\{C_J\}_{[J]}$, $\SW(0)=1$, and
$$
\SW(C_J) = |[J]| \prod_{j \in J} (-1)^{h^0(N_{C_j / S})}, \quad \forall \varnothing \neq J \subset [m].
$$
By Conjecture \ref{conj2}, $\chi^{\vir}(M,\mu(L) \otimes E^{\otimes r})$ equals the coefficient of $w^{\vd(M)/2}$ of
\begin{align*}
\rho^{2 - \chi+K^2} \, G_{r}^{\chi(L)} F_{r}^{\frac{1}{2} \chi} A_{r}^{L K} B_{r}^{K^2} \sum_{(J_1, \ldots, J_{\rho-1}) \atop J_1, \ldots, J_{\rho-1} \subset [m]} \prod_{j=1}^{\rho-1} \eps_{\rho}^{j C_{J_j} c_1} (-1)^{h^0(N_{C_{J_j /S}})} A_{j,r}^{L C_{J_j}} \prod_{1 \leq j \leq k \leq \rho-1} B_{jk,r}^{C_{J_j} C_{J_k}}.
\end{align*}
For any choice of $J_1, \ldots, J_{\rho-1} \subset [m]$ one obtains a partitioning of $[m]$ as follows. Define $P_{[\rho-1]} := \bigcap_{j \in \{1, \ldots, \rho - 1\}} J_j$. Then, for any $J \subsetneq [\rho-1]$, we inductively define
\begin{align*}
P_J &:= \Big( \bigcap_{j \in J} J_j \Big) \setminus \bigcup_{J \subsetneq K} P_K \\
&\cdots \\
P_{\varnothing} &:= [m] \setminus \bigcup_{ \varnothing \neq K} P_K.
\end{align*}
By inclusion-exclusion, we have 
$$
[m] = \bigsqcup_{J \subset [\rho-1]} P_J.
$$
We can therefore replace the sum $\sum_{(J_1, \ldots, J_{\rho-1})}$ above by the sum
$$
\sum_{[m] = \bigsqcup_{J \subset [\rho-1]} P_J}
$$
over all partitionings of $[m]$. Writing the summands in terms of the $P_J$, we obtain
$$
\rho^{2 - \chi+K^2} \, G_r^{\chi(L)} F_r^{\frac{1}{2} \chi} A_{r}^{LK} B_{r}^{K^2} \prod_{\ell =1}^{m} \sum_{J \subset [\rho - 1]} \prod_{j \in J} ((-1)^{h^0(N_{C_\ell / S})} \eps_{\rho}^{j C_\ell c_1} A_{j,r}^{L C_\ell}) \prod_{(j\leq k) \in J \times J} B_{jk,r}^{C_\ell^2},
$$ 
where we define the product over $J = \varnothing$ to be equal to 1. The result follows from the definition of the power series $A_{J,r}, B_{J,r}$ in \eqref{Jseries}.
\end{proof}

\subsection{Blow-up formula}

\begin{proposition}
Let $S$ be a smooth projective surface satisfying $b_1(S)=0$ and $p_g(S)>0$. Let $\rho \in \Z_{>0}$, $c_1 \in H^2(S,\Z)$, $L \in \Pic(S)$, and $r \in \Z$. Denote formula \eqref{Verlindenum} by $\psi_{S,\rho,c_1,L,r}$. Let $\pi : \widetilde{S} \rightarrow S$ be the blow-up of $S$ in one point with exceptional divisor $E$ and set $\widetilde{c}_1 = \pi^* c_1 - m E$, and $\widetilde{L} = \pi^* L - \ell E$. Then  
$$
\psi_{\widetilde{S},\rho,\widetilde{c}_1,\widetilde{L},r} = \frac{1}{\rho G_r^{\binom{\ell+1}{2}}} \Bigg[ \sum_{J \subset [\rho-1]}  \eps_{\rho}^{\|J\| m} \frac{A_{J,r}^{\ell}}{B_{J,r}} \Bigg] \psi_{S,\rho,c_1,L,r}.
$$
\end{proposition}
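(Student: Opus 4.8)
The plan is to compute $\psi_{\widetilde S,\rho,\widetilde c_1,\widetilde L,r}$ directly from the expression \eqref{Verlindenum} attached to $\widetilde S$, to rewrite every ingredient in terms of the data of $S$, and then to factor off the contribution of the exceptional divisor $E$. First I would record the elementary blow-up relations for $\pi:\widetilde S\to S$: since $\chi(\O_S)$, $p_g$ and $b_1$ are birational invariants we have $\chi(\O_{\widetilde S})=\chi$, $b_1(\widetilde S)=0$, $p_g(\widetilde S)>0$, and moreover $K_{\widetilde S}=\pi^*K_S+E$, $K_{\widetilde S}^2=K_S^2-1$, $E^2=-1$, $\pi^*\beta\cdot E=0$ for all $\beta\in H^2(S,\Z)$. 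The one genuinely non-formal input is the Seiberg--Witten blow-up formula, which I would state in Mochizuki's convention $\SW(a)=\widetilde{\SW}(2a-K_S)$. Writing $a=\pi^*b+kE$ with respect to the decomposition $H^2(\widetilde S,\Z)=\pi^*H^2(S,\Z)\oplus\Z E$, the basic-class condition $a^2=aK_{\widetilde S}$ becomes $b^2-bK_S=k(k-1)$, which forces $k\in\{0,1\}$ whenever $b$ is a basic class of $S$; the blow-up formula then gives that the basic classes of $\widetilde S$ are exactly $\pi^*b$ and $\pi^*b+E$, with $\SW_{\widetilde S}(\pi^*b)=\SW_{\widetilde S}(\pi^*b+E)=\SW_S(b)$. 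Accordingly I parametrize each summation variable as $a_j=\pi^*b_j+\eps_jE$ with $\eps_j\in\{0,1\}$, so the sum over $(a_1,\dots,a_{\rho-1})\in H^2(\widetilde S,\Z)^{\rho-1}$ in \eqref{Verlindenum} splits into a double sum over $(b_1,\dots,b_{\rho-1})$ and over $(\eps_1,\dots,\eps_{\rho-1})\in\{0,1\}^{\rho-1}$. It is precisely this doubling of basic classes that produces the subset sum in the claim.

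Next comes the intersection-theoretic bookkeeping. Using the relations above I would compute $\chi(\widetilde L)=\chi(L)-\binom{\ell+1}{2}$ (the sign $E^2=-1$ is what turns $+\ell(\ell+1)$ into $-\ell(\ell+1)$ here), $\widetilde L K_{\widetilde S}=LK_S+\ell$, and, for the summation variables, $a_j\widetilde c_1=b_jc_1+\eps_j m$, $a_j\widetilde L=b_jL+\eps_j\ell$, and $a_ja_k=b_jb_k-\eps_j\eps_k$. Substituting these into \eqref{Verlindenum} for $\widetilde S$ and splitting off the $E$-contribution in the prefactors,
\[
\rho^{2-\chi+K_{\widetilde S}^2}=\tfrac1\rho\,\rho^{2-\chi+K_S^2},\quad G_r^{\chi(\widetilde L)}=G_r^{-\binom{\ell+1}{2}}G_r^{\chi(L)},\quad A_r^{\widetilde L K_{\widetilde S}}=A_r^{\ell}A_r^{LK_S},\quad B_r^{K_{\widetilde S}^2}=B_r^{-1}B_r^{K_S^2},
\]
every summand factors as a term depending only on $(b_j)$ times a term depending only on $(\eps_j)$. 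The $(b_j)$-part, together with $\rho^{2-\chi+K_S^2}G_r^{\chi(L)}F_r^{\chi/2}A_r^{LK_S}B_r^{K_S^2}$, reassembles exactly into $\psi_{S,\rho,c_1,L,r}$, while the leftover is $\tfrac1\rho\,G_r^{-\binom{\ell+1}{2}}A_r^{\ell}B_r^{-1}$ times the $(\eps_j)$-sum $\sum_{(\eps_j)\in\{0,1\}^{\rho-1}}\prod_j\eps_\rho^{j\eps_j m}A_{j,r}^{\eps_j\ell}\prod_{j\le k}B_{jk,r}^{-\eps_j\eps_k}$.

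Finally I would identify the $(\eps_j)$-sum with the claimed bracket via the bijection $\{0,1\}^{\rho-1}\cong\{J\subset[\rho-1]\}$, $J=\{j:\eps_j=1\}$. Under it $\prod_j\eps_\rho^{j\eps_j m}=\eps_\rho^{\|J\|m}$, $\prod_jA_{j,r}^{\eps_j\ell}=(\prod_{j\in J}A_{j,r})^{\ell}$, and $\prod_{j\le k}B_{jk,r}^{-\eps_j\eps_k}=(\prod_{i\le j\in J}B_{ij,r})^{-1}$. Multiplying by the leftover factor $A_r^{\ell}B_r^{-1}$ and invoking the definitions $A_{J,r}=A_r\prod_{j\in J}A_{j,r}$ and $B_{J,r}=B_r\prod_{i\le j\in J}B_{ij,r}$ from \eqref{Jseries} collapses the expression to $\sum_{J\subset[\rho-1]}\eps_\rho^{\|J\|m}A_{J,r}^{\ell}/B_{J,r}$. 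Assembling the three pieces yields
\[
\psi_{\widetilde S,\rho,\widetilde c_1,\widetilde L,r}=\frac{1}{\rho\,G_r^{\binom{\ell+1}{2}}}\Bigg[\sum_{J\subset[\rho-1]}\eps_\rho^{\|J\|m}\frac{A_{J,r}^{\ell}}{B_{J,r}}\Bigg]\psi_{S,\rho,c_1,L,r},
\]
as claimed. Beyond the Seiberg--Witten blow-up formula, the argument is entirely routine intersection theory and reindexing; the main thing to get right is the careful tracking of the sign $E^2=-1$, since it is responsible both for the negative power of $G_r$ and for the appearance of $B_{J,r}^{-1}$ (rather than $B_{J,r}$) in the bracket.
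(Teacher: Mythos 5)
Your proposal is correct and follows exactly the paper's route: the paper likewise cites the Seiberg--Witten blow-up formula (Morgan, Thm.~7.4.6) to get the basic classes $\{\pi^*a,\pi^*a+E\}$ of $\widetilde S$ with unchanged invariants, and then leaves the rest as ``an easy calculation'' using $K_{\widetilde S}=\pi^*K_S+E$, $\chi(\O_{\widetilde S})=\chi(\O_S)$, and $E^2=-1$. Your write-up simply carries out that calculation in full, and all the bookkeeping checks out, including $\chi(\widetilde L)=\chi(L)-\binom{\ell+1}{2}$, the split $a_j=\pi^*b_j+\eps_jE$, and the reassembly of $A_r^{\ell}B_r^{-1}$ with the $(\eps_j)$-sum into $\sum_{J}\eps_{\rho}^{\|J\|m}A_{J,r}^{\ell}/B_{J,r}$ via \eqref{Jseries}.
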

\begin{proof}
By \cite[Thm.~7.4.6]{Mor}, the set of Seiberg-Witten basic classes of $\widetilde{S}$ is given by $\{\pi^*a, \pi^*a + E\}$, where $a$ runs over all Seiberg-Witten basic classes of $S$, and 
$$
\SW(\pi^* a) = \SW(\pi^*a + E) = \SW(a).
$$
The result now follows from an easy calculation using $K_{\widetilde{S}} = \pi^* K_S +E$, $\chi(\O_{\widetilde{S}}) = \chi(\O_S)$, and $E^2=-1$.
\end{proof}

{\tt{gottsche@ictp.it, m.kool1@uu.nl}}
\end{document}